\algrenewcommand\algorithmicrequire{\textbf{Input:}}
\algrenewcommand\algorithmicensure{\textbf{Output:}}
\newtheorem{theorem}{Theorem}[section]
\newtheorem{lemma}[theorem]{Lemma} 
\newtheorem{corollary}[theorem]{Corollary}
\newtheorem{proposition}[theorem]{Proposition}
\newtheorem{definition}[theorem]{Definition}
\newtheorem{observation}[theorem]{Observation}
\newtheorem{remark}[theorem]{Remark}
\DeclareRobustCommand*{\join}{\mathrel{\ooalign{\hss$\triangleleft$\hss\cr$\triangleright$}}}
\newcommand{\Hasse}[1][]{\mathscr{H}\ifthenelse{\equal{#1}{}}{}{(#1)}}
\newcommand{\hasse}[1][]{\mathscr{H}\ifthenelse{\equal{#1}{}}{}{(#1)}}
\DeclareMathOperator{\CC}{\mathtt{C}}
\newcommand{\PrimeCat}{\ensuremath{\vcenter{\hbox{\includegraphics[scale=0.01]{./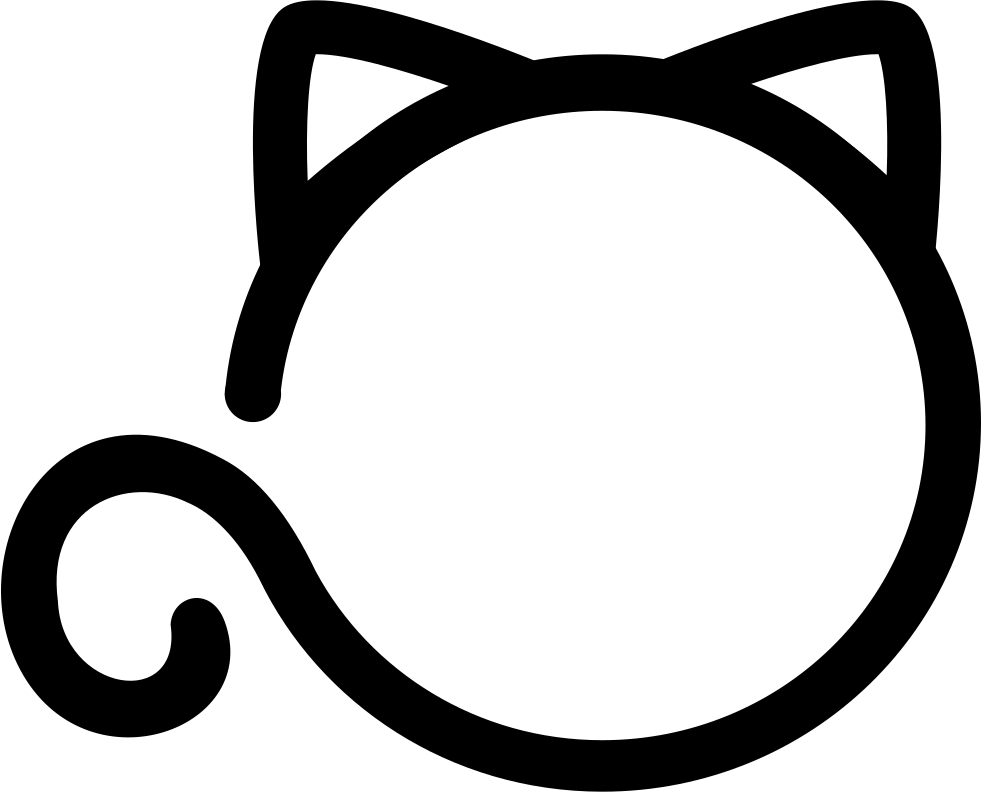}}}^{\textit{prime}}}}
\newcommand{\gatex}{\textsc{GaTEx}\xspace}
\DeclareMathOperator{\child}{child}
\newcommand{\LCA}{\ensuremath{\operatorname{LCA}}}
\newcommand{\lca}{\ensuremath{\operatorname{lca}}}
\newcommand{\parent}{\ensuremath{\operatorname{par}}}
\DeclareMathOperator{\indeg}{indeg}
\DeclareMathOperator{\outdeg}{outdeg}
\newcommand{\MD}{\ensuremath{\mathbb{M}}}
\newcommand{\MDstrong}{\ensuremath{\mathbb{M}_{\mathrm{str}}}}
\newcommand{\Mmax}{\ensuremath{\mathbb{M}_{\max}}}
\newcommand{\MDT}{\ensuremath{\mathscr{T}}}
\newcommand{\N}{\ensuremath{\mathscr{N}}}
\renewcommand{\P}{\ensuremath{\mathcal{P}}}
\DeclareMathOperator{\klca}{\textit{k}-lca}
\DeclareMathOperator{\2lca}{\textit{2}-lca}
\newcommand{\primeL}{\textsc{Prime}\xspace}
\DeclareMathOperator{\pvr}{pvr}
\newcommand{\pfam}{\mathcal{F}}
\DeclareFontFamily{OT1}{pzc}{}
\DeclareFontShape{OT1}{pzc}{m}{it}{ <-> s*[1.1] pzcmi7t }{}
\DeclareMathAlphabet{\mathpzc}{OT1}{pzc}{m}{it}
\newcommand{\SM}{\mathpzc{S}}
\newcommand{\TM}{\mathpzc{T}}
\newenvironment{owndesc}%
	{\begin{description}[leftmargin = 0.2cm, labelsep = 0.2cm]}
    {\end{description}}
\providecommand{\keywords}[1]{\textbf{\textit{Keywords: }} dissimilarities, symbolic ultrametrics, 2-structures, edge-colored graphs, strudigram, prime-vertex replacement, phylogenetic network, clustering systems}
\begin{document}

\title{Network Representation and Modular Decomposition of Combinatorial Structures: A Galled-Tree Perspective}

\author[1]{Anna Lindeberg} 
\author[2]{Guillaume E. Scholz} 
\author[1,*]{Marc Hellmuth} 

\affil[1]{Department of Mathematics, Faculty of Science,
  Stockholm University, SE-10691 Stockholm, Sweden} 

\affil[2]{Bioinformatics Group, Department of Computer Science \&
    Interdisciplinary Center for Bioinformatics, Universit{\"a}t Leipzig,
    H{\"a}rtelstra{\ss}e~16--18, D-04107 Leipzig, Germany.}

\affil[*]{corresponding author}

\date{\ }

\setcounter{Maxaffil}{0}
\renewcommand\Affilfont{\itshape\small}

\maketitle 
\abstract{In phylogenetics, reconstructing rooted trees from distances between taxa is a common
          task. B\"ocker and Dress generalized this concept by introducing symbolic dated maps
          $\delta\colon X \times X \to \Upsilon$, where distances are replaced by symbols, and
          showed that there is a one-to-one correspondence between symbolic ultrametrics and labeled
          rooted phylogenetic trees. Many combinatorial structures fall under the umbrella of symbolic
          dated maps, such as 2-dissimilarities, symmetric labeled 2-structures, or edge-colored
          complete graphs, and are here referred to as strudigrams.

			Strudigrams have a unique decomposition into non-overlapping modules, which can be
			represented by a modular decomposition tree (MDT). In the absence of prime modules,
			strudigrams are equivalent to symbolic ultrametrics, and the MDT fully captures the
			relationships $\delta(x,y)$ between pairs of vertices $x,y \in X$ through the label of
			their least common ancestor in the MDT. However, in the presence of prime vertices, this
			information is generally hidden.

			To provide this missing structural information, we aim to locally replace the prime
			vertices in the MDT to obtain networks that capture full information about the strudigrams.
			While starting with the general framework of prime-vertex replacement networks, we then
			focus on a specific type of such networks obtained by replacing prime vertices with
			so-called galls, resulting in labeled galled-trees. We introduce the concept of galled-tree
			explainable (\textsc{GaTEx}) strudigrams, provide their characterization, and demonstrate that
			recognizing these structures and reconstructing the labeled networks that explain them can
			be achieved in polynomial time.
			}

\keywords{dissimilarities, symbolic ultrametrics, 2-structures, edge-colored graphs, strudigram, prime-vertex replacement, phylogenetic network, clustering systems}

\section{Introduction}

It is a common task in phylogenetics to reconstruct rooted trees with leaf set $X$, reflecting the
evolutionary relationships between species, genes, or other taxa collected in $X$, from distances
$d\colon X \times X \to \mathbb{R}$ among them. In particular, there is a well-known
one-to-one correspondence between so-called rooted dated phylogenetic trees $(T,t)$ and
ultrametrics, i.e., metrics satisfying the strong triangle inequality $d(x, y) \leq \max\{d(x, z),
d(y, z)\}$ for all $x, y, z \in X$ \cite{Gordon:87,sem-ste-03a}. In this case, the date $t(\lca(x,
y))$ assigned to the unique least common ancestor $\lca(x, y)$ of $x$ and $y$ in $T$ coincides
with the distance $d(x, y)$ for all $x, y \in X$.

In their seminal paper \cite{BD98}, B\"ocker and Dress considered a combinatorial generalization of ultrametrics
and removed the restrictions regarding the values
of $t$ and $d$ as far as possible. Instead of considering real numbers $d(x, y)$, they considered
symbolic dating maps $\delta\colon X \times X \to \Upsilon$, i.e., symmetric maps assigning to each pair $(x, y)$ some
symbol $\delta(x, y) \in \Upsilon$. In particular, they showed the one-to-one correspondence between
so-called symbolic ultrametrics and symbolic dated, or simply labeled, phylogenetic trees $(T, t)$. 
Symbolic ultrametrics $\delta\colon X \times X \to \Upsilon$ are defined by the following two properties:
\begin{description}\smallskip
\item[(U2)] There are no four vertices $x, y, u, v \in X$ such that

				 $\delta(x, y) = \delta(y, u) = \delta(u, v) \neq \delta(y, v) = \delta(x, v) = \delta(x, u)$. \smallskip
\item[(U3)] $|\{\delta(x, y), \delta(x, z), \delta(y, z)\}| \leq 2$ 
					for all $x, y, z \in V$.\smallskip
\end{description}

While long considered of interest only from a theoretical point of view\footnote{Anecdotally,
Sebastian B\"ocker mentioned in a private communication that he never thought that symbolic ultrametrics would have
any practical relevance.} (cf.\ e.g.\ \cite{SEMPLE1999300,Dress2007,DS:2007}), such symbolic dating maps have become relevant in practice and are shown
to play a crucial role in the understanding of so-called orthologous and paralogous genes
\cite{Hellmuth:13a,HS-chapter-2024} and the reconstruction of gene and species trees \cite{Hellmuth:15a}.

Restricting these symmetric maps $\delta\colon X \times X \to \{0,1\}$ to be binary, there
is a direct translation to ``usual'' graphs $G=(X,E)$ with vertex set $X$ and adjacent vertices $x$
and $y$ precisely if $\delta(x, y)=1$. It is well known \cite{Hellmuth:13a} that a graph $G$ represents a 
symbolic ultrametric if and only if $G$ is a cograph, i.e., $G$
does not contain induced paths $P_4$ on four vertices \cite{Corneil:85,CLS:81}. 
In this case, $G$ can be ``explained'' by a $0/1$-labeled tree $(T,t)$ 
in the sense that $x,y \in X$ are adjacent if and only if $t(\lca(x, y))=1$. 
In a more general setting, symbolic ultrametrics can be characterized as edge-colored graphs that do not contain so-called
monochromatic $P_4$s (due to Condition (U2)) and rainbow triangles (due to Condition (U3)) \cite{HSW:17, Hellmuth:13a}, see Theorem \ref{thm:char-treeEx}. 
A similar result was provided by Gurvich in \cite[Thm.\ 3]{Gurvich:09} for the 
characterization of the normal forms of so-called positional games with perfect information.

Independent of the work of B\"ocker and Dress, Ehrenfeucht and Rozenberg established the theory of labeled 2-structures
\cite{Ehren1990A,Ehren1990B}. 
This theory relates to clustering and tree-representation, and is closely connected to symbolic
dating maps. As shown in \cite{HSW:17}, a symbolic dating map
$\delta$ is a symbolic ultrametric if and only if the translation of $\delta$
into symmetric labeled 2-structures is ``uniformly non-prime''.
The latter is closely related to the notion of modules or clans \cite{EHREN1994,ehrenfeucht1999theory,Engelfriet1996,HSW:17}, i.e., 
subsets $M \subseteq X$ for which $\delta(x, z)=\delta(y, z)$ is satisfied for all $x, y \in M$ and $z \in X \setminus M$. 
In particular, for each symbolic dating map $\delta$, there is a unique set system $\mathfrak{C}_{\delta}$ of non-overlapping modules 
called the \emph{modular decomposition} of $\delta$. Since no two elements in $\mathfrak{C}_{\delta}$ overlap, 
$\mathfrak{C}_{\delta}$ is a hierarchy and thus, the Hasse diagram $T_{\delta} \coloneqq \hasse(\mathfrak{C}_{\delta})$
is a phylogenetic tree \cite{Dress:97,Hellmuth2023,birkhoff1940lattice} that can be equipped with a labeling $t_{\delta}$ resulting in a labeled tree $(T_{\delta}, t_{\delta})$. 
In the absence of so-called prime modules, the information about $\delta$ is fully captured by
$(T_{\delta}, t_{\delta})$ in the sense that $t_{\delta}(\lca(\{x\}, \{y\})) = \delta(x, y)$ for all $x, y \in X$. 
However, in the presence of prime modules, the full information of $\delta$ cannot be determined 
from $(T_{\delta}, t_{\delta})$ alone.

To summarize at this point, several equivalent notions of symbolic dated maps exist: 
symbolic 2-dissimilarity \cite{Huber2018}, symmetric 2-structures, or edge-colored graphs. 
These structures have in common that they are mainly considered from a tree-reconstruction point of view, 
i.e., characterizations are provided for those symbolic dated maps that can be represented by labeled phylogenetic
trees. In this paper, we aim to generalize the latter concept and ask under which conditions 
symbolic dated maps can be represented by labeled networks, i.e., rooted directed acyclic graphs. 
A first step in this direction is provided in \cite{HS:22,HS:24,HS:24B}, where ``binary'' maps
$\delta\colon X \times X \to \{0,1\}$ have been characterized that can be explained 
by so-called galled-trees, which results in the definition of so-called $\gatex$ graphs.
Furthermore, in \cite{huber2023shared} those symbolic dated maps are characterized
that can be explained by particular 
rather ``tree-like'' directed acyclic graphs which may admit several roots. 
In \cite{BHS:21} it was shown that every symbolic map can be explained by  some rooted median network
and in \cite{Huber2018}  some results are presented for so-called 3-way symbolic maps that arise from restricted versions of galled-trees.

To cover the different, yet similarly flavored, definitions related to symbolic dated maps
and to simplify the notion, we consider \emph{strudigrams}, which are triples $\SM = (X, \Upsilon, \sigma)$
such that $\sigma$ is a map that assigns a unique label 
$\sigma(x,y) \in \Upsilon$ to each two elements $x, y\in X$. 
The word \emph{strudigram} is composed of the first parts of the words \textbf{stru}ctures,
\textbf{di}ssimilarities, and \textbf{gra}phs. A strudigram $\SM$ is explained by a labeled
network $(N, t)$ with leaf set $X$ if,  for all $x, y \in X$, there is a unique least common ancestor $\lca(x, y)$
and it holds that the labels $t(\lca(x, y))$ and $\sigma(x, y)$ coincide.

We provide in Section \ref{sec:prelim} the necessary definitions concerning clustering systems and
networks, and in Section \ref{sec:strudi-N} more details on the notion of strudigrams. As it turns
out, each strudigram $\SM = (X, \Upsilon, \sigma)$ can be explained by some labeled network having
$O(|X|^2)$ vertices, assigning to each two-element subset $\{x, y\}$ of $X$ a unique $\lca$. This
begs the question of whether there are simpler networks. One way to address this question is by
using the modular decomposition tree $(\MDT, \tau)$ of $\SM$ as explained in Section
\ref{sec:moddec}. In the absence of so-called prime vertices in $(\MDT, \tau)$, this labeled tree
explains $\SM$. This is precisely the case if the translation of $\SM$ into a symbolic dated map
$\delta$ is a symbolic ultrametric. The aim is thus to reconstruct a network by locally replacing such prime
vertices by some networks and keeping as much of the tree structure of $\MDT$ as possible. The latter is covered in
Section \ref{sec:pvr}. In particular, we show how to obtain so-called lca-networks to explain
given strudigrams.
While the concept of replacing prime vertices in $(\MDT, \tau)$
is rather general, we focus in Section \ref{sec:gatex} on those networks that can be obtained from
$(\MDT, \tau)$ by replacing prime vertices with so-called galls that are, roughly speaking,
biconnected directed acyclic graphs composed of two paths that only share their start and end point.
Such networks are also known as galled-trees. We provide characterizations of strudigrams $\SM$,
called \gatex strudigrams, that can be explained by such labeled galled-trees in terms of their
underlying so-called primitive substructures. As it turns out, \gatex strudigrams, as well as
networks that explain them, can be constructed in polynomial time, as shown in Section
\ref{sec:algo}.


\section{Preliminaries}
\label{sec:prelim}

\noindent
\textbf{Sets and Clustering Systems.}
All sets considered here are assumed to be finite. For a nonempty set $X$, $\binom{X}{2}$ denotes
the collection of all two-element subsets of $X$ and $2^X$ the powerset of $X$. A \emph{partition of
a set $X$} is a family of nonempty sets $\{X_1,\ldots,X_k\}$ such that $X=\cup_{i=1}^k X_i$ and
$X_i\cap X_j=\emptyset$ for each $i\neq j$. Two sets $X$ and $Y$ \emph{overlap} if $X\cap Y\notin\{\emptyset,X,Y\}$. 

A set system $\mathfrak{S}\subseteq 2^X$ is a
\emph{clustering system (on $X$)} if $\emptyset\notin\mathfrak{S}$, $\{x\}\in\mathfrak{S}$ for all $x\in X$
and $X\in \mathfrak{S}$. For a given parameter $k$, a set system $\mathfrak{Q}$ on $X$ is \emph{pre-$k$-ary} if, 
for all non-empty subsets $A\subseteq X$ with $|A|\leq k$ there is a unique inclusion-minimal element $C\in \mathfrak{Q}$ such that $A\subseteq  C$.
Let $\mathfrak{C}$ be a clustering system. Then, $\mathfrak{C}$ is a \emph{hierarchy} if it does not contain any overlapping clusters.
Furthermore, $\mathfrak{C}$ is \emph{closed} if and only if $A, B \in \mathfrak{C}$  and $A \cap B \neq \emptyset$ implies $A \cap B \in \mathfrak{C}$ (cf.\ \cite[L.\ 16]{Hellmuth2023}). 
Moreover, $\mathfrak{C}$ satisfies property (L) if
  $C_1\cap C_2=C_1\cap C_3$ for all $C_1,C_2,C_3\in\mathfrak{C}$ where $C_1$
  overlaps both $C_2$ and $C_3$.
 Finally,  $\mathfrak{C}$ satisfies property (N3O) if
 $\mathfrak{C}$ contains no three distinct pairwise
    overlapping clusters.

Given a set system $\mathfrak C$ on $X$, we define
\[\mathfrak{C}-x \coloneqq \{C\setminus \{x\} \mid C\in \mathfrak{C} \text{ and } C\neq \{x\}\}.\]

In simple words, $\mathfrak{C}-x$ is the set system obtained from $\mathfrak{C}$ by removing 	$\{x\}$  and  
from each $C\in \mathfrak{C}$ with $C\neq\{x\}$ the element $x$ (if existent). Note that, by definition, 
$\mathfrak{C}-x$ is not a multiset, that is, if $C'\setminus \{x\} = C''$ for some $C',C''\in \mathfrak{C}$
then only one copy of  $C'\setminus \{x\} $ and $ C''$  remains in $\mathfrak{C}-x$.
As the next result shows, the property of being a clustering system that is  closed or that satisfies (L) or (N30)
is heritable.
\begin{lemma}
	\label{lem:hereditary-cluster}
	Let $\mathfrak{C}$ be a clustering system on $X$ with $|X|>1$ that satisfies property $\Pi\in \{closed, \text{(L), (N3O)}\}$
	and let $x\in X$. Then, $\mathfrak{C}-x$ is a clustering system  
	on $X\setminus \{x\}$ that satisfies $\Pi$. 
\end{lemma}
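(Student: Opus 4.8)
The plan is to dispatch the clustering-system part directly and then reduce preservation of each of the three properties to one structural observation about how overlaps behave when the element $x$ is deleted from every cluster. For the first part: $\emptyset\notin\mathfrak{C}-x$, because $C\setminus\{x\}=\emptyset$ forces $C=\{x\}$, which is excluded by the definition of $\mathfrak{C}-x$; $\{y\}\in\mathfrak{C}-x$ for every $y\in X\setminus\{x\}$, since $\{y\}\in\mathfrak{C}$ and $\{y\}\neq\{x\}$; and $X\setminus\{x\}\in\mathfrak{C}-x$, since $X\in\mathfrak{C}$ and $X\neq\{x\}$ because $|X|>1$. Hence $\mathfrak{C}-x$ is a clustering system on $X\setminus\{x\}$.

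The heart of the argument is the following \emph{lifting claim}: if $A,B\in\mathfrak{C}-x$ overlap, then any $A',B'\in\mathfrak{C}$ with $A'\setminus\{x\}=A$ and $B'\setminus\{x\}=B$ overlap as well. Such preimages always exist by the definition of $\mathfrak{C}-x$, each satisfies $A\subseteq A'\subseteq A\cup\{x\}$ and $B\subseteq B'\subseteq B\cup\{x\}$, and neither equals $\{x\}$. To prove the claim, pick $z\in A\cap B$, $a\in A\setminus B$ and $b\in B\setminus A$; all three differ from $x$ since $A,B\subseteq X\setminus\{x\}$, so $z\in A'\cap B'$, $a\in A'\setminus B'$ and $b\in B'\setminus A'$, witnessing that $A'$ and $B'$ overlap. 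I will also use the elementary identity $(A'\cap B')\setminus\{x\}=(A'\setminus\{x\})\cap(B'\setminus\{x\})=A\cap B$.

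Given the lifting claim, the three cases are short. \textbf{(N3O):} three distinct pairwise overlapping clusters of $\mathfrak{C}-x$ lift, under any choice of preimages, to three clusters of $\mathfrak{C}$ that are pairwise overlapping and still pairwise distinct (distinct images force distinct preimages), contradicting (N3O) for $\mathfrak{C}$. \textbf{(L):} if $A_1$ overlaps both $A_2$ and $A_3$ in $\mathfrak{C}-x$, fix preimages $A_1',A_2',A_3'\in\mathfrak{C}$; by the lifting claim $A_1'$ overlaps both $A_2'$ and $A_3'$, so (L) for $\mathfrak{C}$ gives $A_1'\cap A_2'=A_1'\cap A_3'$, and deleting $x$ on both sides yields $A_1\cap A_2=A_1\cap A_3$ by the identity above. \textbf{(closed):} if $A,B\in\mathfrak{C}-x$ with $A\cap B\neq\emptyset$, fix preimages $A',B'$; then $A'\cap B'\supseteq A\cap B\neq\emptyset$, so $A'\cap B'\in\mathfrak{C}$ by closedness, and since $A\cap B\subseteq X\setminus\{x\}$ is nonempty we get $A'\cap B'\neq\{x\}$, hence $A\cap B=(A'\cap B')\setminus\{x\}\in\mathfrak{C}-x$.

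The main obstacle is the non-injectivity of the deletion operation: a cluster $C$ and $C\cup\{x\}$ may both lie in $\mathfrak{C}$ and collapse to the same set of $\mathfrak{C}-x$, so one cannot speak of ``the'' preimage. The lifting claim is exactly what neutralizes this, as it makes overlap preservation independent of which preimage is selected. Note that the converse of the lifting claim fails---two overlapping clusters of $\mathfrak{C}$ can become disjoint or nested after deleting $x$---but since all three properties only constrain configurations of overlapping clusters, we only ever need the direction supplied by the claim.
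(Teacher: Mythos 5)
Your proof is correct and follows essentially the same route as the paper's: verify the clustering-system axioms directly, then for each of the three properties lift overlapping clusters of $\mathfrak{C}-x$ back to overlapping clusters of $\mathfrak{C}$ and push the conclusion down via $(A'\cap B')\setminus\{x\}=A\cap B$. The only difference is organizational — you isolate and explicitly prove the lifting claim that the paper dispatches with ``one easily verifies,'' and you handle closedness by lifting from $\mathfrak{C}-x$ rather than projecting from $\mathfrak{C}$, which is a mild tidying rather than a new idea.
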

\begin{proof}
	Put $\mathfrak{C}' = \mathfrak{C}-x$ and  $X' \coloneqq X\setminus \{x\}$ for some $x\in X$ with $|X|>1$.
	By construction,  $\mathfrak{C}'$ is a set system. 
	Since $\mathfrak{C}$ is a clustering
	system on $X$ we have, by definition, $\emptyset\notin \mathfrak{C}$, $X\in \mathfrak{C}$ and
	$\{w\}\in \mathfrak{C}$ for all $w\in X$. Since $|X|>1$, we have $X'\neq \{x\}$ and by
	construction, $X'\in \mathfrak{C}'$. Moreover, by construction, $\mathfrak{C}'$ still contains
	$\{w\}\in \mathfrak{C}$ for all $w \in X'$. In particular, $\emptyset\notin \mathfrak{C}'$. Thus,
	$\mathfrak{C}'$ is a clustering system on $X'$.

	Suppose that $\mathfrak{C}$ is closed. Let $A,B \in \mathfrak{C}$ and put $A'=A\setminus \{x\}$
	and $B'=B\setminus \{x\}$. Note that $A=A'$ or $B=B'$ may be possible. Let $C=A\cap B$ and, therefore, 
	$C'=C\setminus \{x\} = A'\cap B'$. We consider now the three cases that may occur for $C$:
	(i) $C=\emptyset$, 	(ii) $C=\{x\}$, and (iii) $C\neq \emptyset$ and $C\neq \{x\}$. 
	If $C = \emptyset$,	then  $C' = A' \cap B' = \emptyset$ and there is nothing to show. 
	If $C = \{x\}$, then $C' = \emptyset$ and again, there is nothing to	show. 
	Assume that $C\neq \emptyset$ and $C \neq \{x\}$. In this case, $C' = A'\cap B'\neq
	\emptyset$. Since $\mathfrak{C}$ is closed, $C \in \mathfrak{C}$ and, by construction of $\mathfrak{C}'$, we have 
	$C'\in	\mathfrak{C}'$. Thus,
	$\mathfrak{C}'$ is closed. 
	
	Suppose that $\mathfrak{C}$ satisfies (L). Again, for a cluster $C_i\in \mathfrak{C}$ that is
	distinct from $\{x\}$, we put $C'_i \coloneqq C_i\setminus \{x\}$. Note that $C_i\neq C_j$ if 
	$C'_i\neq C'_j$. Let $C'_1 \in \mathfrak{C}'$ and assume that $C'_1$ overlaps both $C'_2 , C'_3
	\in \mathfrak{C}'$. One easily verifies that $C_1$ must overlap both $C_2$ and $C_3$. In particular,
	since $\mathfrak{C}$ satisfies (L), we have $C_1\cap C_2=C_1\cap C_3 = A$
	and thus, 	$C'_1\cap C'_2=A\setminus
	\{x\} = C'_1\cap C'_3$. Therefore, $\mathfrak{C}'$ satisfies (L). 
	
	Suppose that $\mathfrak{C}$ satisfies (N3O). If
   $A'=A\setminus \{x\}, B'=B\setminus \{x\}, C'=C\setminus \{x\} \in \mathfrak{C}'$
	overlap pairwise, one easily verifies that $A,B,C$ overlap pairwise;  a contradiction to the 
	assumption that $\mathfrak{C}$ does not contain three distinct pairwise
   overlapping clusters. Hence, $\mathfrak{C}'$ satisfies (N3O). 
\end{proof}

\smallskip\noindent
\textbf{Graphs.}
A \emph{directed graph} $G=(V,E)$ is an ordered pair consisting of a nonempty set $V(G)\coloneqq V$
of \emph{vertices} and a set $E(G)\coloneqq E \subseteq\left(V\times V\right)\setminus\{(v,v) \mid
v\in V\}$ of \emph{edges}. If $(x,y)\in E$ implies  $(y,x)\in E$ for all $x,y \in V$, then
$G$ is called an \emph{(undirected) graph}. Two vertices $x$ and $y$ are \emph{adjacent} if 
$(x,y)\in E$  or $(y,x)\in E$.

A directed graph $G'=(V',E')$ is a \emph{subgraph} of $G=(V,E)$ if
$V'\subseteq V$ and $E'\subseteq E$. For $W\subseteq V$, the \emph{induced subgraph} $G[W]$ of $G$
is the graph with vertex set $V(G[W])=W$ and edge set $E(G[W])=\{(x,y)\in E\mid x,y\in W\}$. In
particular, $G-v$ for $v\in V$ denotes the induced subgraph $G[V\setminus\{v\}]$, whenever $|V|>1$. 

For directed graphs $G=(V,E)$ and $G'=(V', E')$, an \emph{isomorphism
between $G$ and $G'$} is a bijective map $\varphi\colon V\to V'$ such that $(u,v)\in E$ if and only
if $(\varphi(u),\varphi(v))\in E'$. If such a map exists, then $G$ and $G'$ are \emph{isomorphic}, in
symbols $G\simeq G'$. 

A \emph{path} $P_n = (V,E)$ has vertex set $V = \{v_1,v_2,\ldots,v_n\}$ and the edges in $E$ are
precisely of one of the form $(v_i,v_{i+1})$ or $(v_{i+1},v_i)$, $i=1,2,\ldots, n-1$. We often
denote such paths by $P_n=v_1v_2\ldots v_n$ and call them a $v_1v_n$-path. If all edges in $P_n$ are
precisely of the form $(v_i,v_{i+1})$ for each $i=1,2,\ldots, n-1$, then the path $P_n$ is called
\emph{directed} and, otherwise, \emph{undirected}. A path $P_n$ has \emph{length} $n-1$, and the
vertices $v_1$ and $v_n$ are called its \emph{end vertices}. 
In particular, $n=1$ means that the path in
question consists of a single (end) vertex, so that it has length zero. 
A directed graph $G$ is \emph{connected} if there exists
an undirected $xy$-path between any pair of vertices $x$ and $y$. If $G$ has only one vertex, or if
$G-v$ is connected for each vertex $v$ of $G$, then $G$ is said to be \emph{biconnected}. A
\emph{biconnected component} $C=(V',E')$ of $G=(V,E)$ is an inclusion-maximal biconnected induced
subgraph. A biconnected component is \emph{non-trivial} if it has at least three vertices, otherwise
it is \emph{trivial}.

For each $v\in V$ we define $\indeg(v)\coloneqq \left|\left\{u\in V \colon (u,v)\in
E\right\}\right|\quad$ and $\quad \outdeg(v)\coloneqq \left|\left\{u \in V\colon (v,u)\in
E\right\}\right|$ as the \emph{in-degree} respectively \emph{out-degree} of $v$. 
Moreover, $G$ is said to be \emph{acyclic} if it contains no \emph{directed cycle}, that is, no sequence of $k\geq 2$
distinct vertices $v_1,v_2,\ldots, v_k\in V$ such that
$(v_1,v_2),(v_2,v_3),\ldots,(v_{k-1},v_{k}),(v_k,v_1)\in E$.

\smallskip\noindent
\textbf{DAGs and Networks.}
A directed acyclic graph is called \emph{DAG}. 
A vertex of a DAG $G$ with out-degree zero
is called a \emph{leaf}, and the set of leaves in $G$ is denoted by $L(G)$.
In this case, we also say that $G$ is a DAG on $L(G)$.

A \emph{(rooted) network} $N$ is a DAG
containing a unique vertex with in-degree zero. This vertex is called the \emph{root} of $N$, and is
denoted by $\rho_N$. The existence of $\rho_N$ in particular implies that $N$ is connected and
that, for all $v\in V(N)$, there is a directed $\rho_Nv$-path. If $X=L(N)$ we also
state that $N$ is a network \emph{on $X$}. A vertex which is not a leaf is called an
\emph{inner-vertex} and we put $V^0(N)\coloneqq V(N)\setminus L(N)$. In particular, the
\emph{trivial} network $(\{\rho_N\},\emptyset)$ contains a single leaf. Furthermore, a vertex $v\in
V(N)$ is a \emph{tree-vertex} if $\indeg(v)\leq1$ and a \emph{hybrid-vertex} if $\indeg(v)\geq2$. If
a network $N$ contains no hybrid-vertices it is a \emph{tree}. If each leaf of $N$ is a tree-vertex,
then $N$ is \emph{leaf-separated}. 
A network $N'$ is the \emph{leaf-extended version} of a network $N$, if
$N'$ is obtained from $N$ by relabeling all hybrids $x$ in $N$ with $\outdeg_N(x) = 0$ by $v_x$ and adding the edge $(v_x, x)$.
Clearly, if $N$ is a network on $X$ then its leaf-extended version is a network on $X$.
A network on $X$ is \emph{phylogenetic}, if it is either (i) trivial
or (ii) non-trivial and there is no vertex $v$ with $\outdeg(v)=1$ and $\indeg(v)\leq 1$. We
emphasize that, by definition, every non-trivial phylogenetic network $N$ satisfies
$\outdeg(\rho_N)>1$. 

As usual in rooted DAGs $N=(V,E)$, we say that $v$ is a \emph{child} of $u$ while $u$ is a
\emph{parent} of $v$, for each pair of vertices $u$ and $v$ such that $(u,v)\in E$. We denote with
$\parent_N(u)$ and $\child_N(u)$ the set of parents and children of vertex $u$. If $x\in\child_N(u)$
is a leaf, we call $x$ also a \emph{leaf-child} of $u$. If there is a directed path from $u$ to $v$
(possibly of length zero) in $N$, then $u$ is an \emph{ancestor} of $v$ while $v$ is a
\emph{descendant} of $u$ and we write $u\succeq_N v$. If $u\succeq_N v$ but $u\neq v$, we write
$u\succ_N v$. In particular, $\succeq_N$ is a partial order on $V$. We thus say that $u,v \in V$ are
\emph{comparable} if $u\succeq_N v$ or $v\succeq_N u$, and \emph{incomparable} otherwise.  
We sometimes use $v\preceq_N u$ and $v\prec_N u$ instead of
$u\succeq_N v$ and $u\succ_N v$, respectively.

A vertex $v$ in a network $N$ with $\outdeg_N(v)=\indeg_N(v)=1$ is \emph{suppressed} by applying the 
following operations: remove $v$ and its two incident edges $(u,v)$ and $(v,w)$ and add 
the edge $(u,w)$ in case it does not already exist. Note that suppression of a vertex $v$
does not change in- and out-degrees of any of the vertices $z\neq v$.
Given a vertex $v$ in $N$, the \emph{subnetwork rooted at $v$}, denoted $N(v)$, is obtained from the
directed graph $N[\{z\mid z\in V(N),\, v\succeq_N z	\}]$ by suppressing  any of its vertices $w$ for which
$\indeg(w)=\outdeg(w)=1$ and, if its new root $v$ has out-degree one, by deleting the vertex $v$ and
its incident edge, so that the new root of $N(v)$ is the child of $v$.

A vertex $v$ of a network $N$ on $X$ is a \emph{least common ancestor} (LCA) of a non-empty subset
$A\subseteq V(N)$ if it is a $\preceq_N$-minimal element among the set of common ancestors of the
vertices in $A$. The set $\LCA_N(A)$ comprises all LCAs of $A$ in $N$. Note $\LCA_N(A)$ is
non-empty, since $\rho_N$ is a common ancestor of every vertex in $N$. We will, in particular, be
interested in situations where $|\LCA_N(A)|=1$ holds for $A\subseteq X$ (where, usually, $|A|=2$).
For simplicity, we will write $\lca_N(A)=v$ instead of $\LCA_N(A)=\{v\}$ whenever $|\lca_N(A)|=1$
and say that \emph{$\lca_N(A)$ is well-defined}; otherwise, we leave $\lca_N(A)$ \emph{undefined}.
Moreover, we write $\lca_N(x,y)$ instead of $\lca_N(\{x,y\})$. 

For every $v\in V$ in a network $N=(V,E)$ on $X$, the set of its descendant leaves
$\CC_N(v)\coloneqq\{ x\in X\mid x \preceq_N v\}$ is a \emph{cluster} of $N$. We write
$\mathfrak{C}_N\coloneqq\{\CC_N(v)\mid v\in V\}$. In particular, we have
$\emptyset\notin\mathfrak{C}_N$, $\CC_N(x)=\{x\}\in\mathfrak{C}_N$ for each $x\in X$, and since
$\rho_N$ is an ancestor of every vertex in $N$, $\CC(\rho_N)=X\in \mathfrak{C}_N$ holds. Thus
$\mathfrak{C}_N$ is a clustering system for every network $N$. To each clustering system $\mathfrak{C}\subseteq 2^X$ one can
associate a ``canonical'' network, namely its
\emph{Hasse diagram} $\Hasse(\mathfrak{C})$, that is, the DAG with vertex set $\mathfrak{C}$ and directed
edges from $A\in\mathfrak{C}$ to $B\in\mathfrak{C}$ if (i) $B\subsetneq A$ and (ii) there is no
$C\in\mathfrak{C}$ with $B\subsetneq C\subsetneq A$. Since $X$ is an element of $\mathfrak{C}$,
$\Hasse(\mathfrak{C})$ has a unique root. Moreover, the leaves of $\Hasse(\mathfrak{C})$ are
precisely the singleton sets $\{x\}$ for $x\in X$. 
This, however,  implies that 
$\mathfrak{C}_{\Hasse(\mathfrak{C})}\neq \mathfrak{C}$. To circumvent this, we write $G\doteq
\Hasse(\mathfrak{C})$ for the network that is obtained from
$\Hasse(\mathfrak{C})$ by relabeling all vertices $\{x\}$ in
$\Hasse(\mathfrak{C})$ by $x$.  Thus, for $G\doteq \Hasse(\mathfrak{C})$ it
holds that $\mathfrak{C}_G=\mathfrak{C}$ provided that $\mathfrak{C}$ is a
clustering system. 

For later reference, we provide 

\begin{observation}[{\cite[Obs.\ 2.5]{SCHS:24}}] 
  \label{obs:deflcaY}
  Let $G$ be a DAG on $X$, $\emptyset\ne A\subseteq X$, and
  suppose $\lca_G(A)$ is well-defined. Then,  the following is satisfied: \smallskip
  \begin{enumerate}
  \item[(i)] $\lca_G(A)\preceq_{G} v$ for all $v$ with $A\subseteq\CC_G(v)$.
  \item[(ii)] $\CC_G(\lca_G(A))$ is the unique inclusion-minimal cluster in
    $\mathfrak{C}_G$ containing $A$.
  \item[(iii)] $\lca	_G(\CC_G(\lca_G(A)))=\lca_G(A)$. 
  \end{enumerate} \smallskip
\end{observation}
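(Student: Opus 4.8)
The plan is to reduce all three parts to one elementary fact about finite partial orders: a finite poset possessing a unique minimal element has that element as a minimum. Throughout, set $w \coloneqq \lca_G(A)$ and recall that a vertex $v$ of $G$ is a common ancestor of the vertices in $A$ exactly when $a \preceq_G v$ for every $a \in A$, i.e.\ exactly when $A \subseteq \CC_G(v)$ (here we use $A \subseteq X = L(G)$). Write $\mathcal{A}$ for the set of all such common ancestors; it is nonempty since $\rho_G \in \mathcal{A}$, and the hypothesis that $\lca_G(A)$ is well-defined says precisely that $w$ is the \emph{unique} $\preceq_G$-minimal element of $\mathcal{A}$.

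First I would prove (i). Fix $v$ with $A \subseteq \CC_G(v)$, so $v \in \mathcal{A}$. The set $D \coloneqq \{u \in \mathcal{A} \mid u \preceq_G v\}$ is a nonempty (it contains $v$) subset of the finite poset $(V(G), \preceq_G)$, hence has a $\preceq_G$-minimal element $m$. If some $u' \in \mathcal{A}$ satisfied $u' \prec_G m$, then $u' \preceq_G v$ by transitivity, so $u' \in D$, contradicting minimality of $m$ in $D$; therefore $m$ is $\preceq_G$-minimal in $\mathcal{A}$, and uniqueness forces $m = w$. Hence $w = m \preceq_G v$, which is (i).

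Part (ii) then follows quickly. Since $w \in \mathcal{A}$ we have $A \subseteq \CC_G(w)$, so $\CC_G(w) \in \mathfrak{C}_G$ is a cluster containing $A$. If $C \in \mathfrak{C}_G$ is any cluster with $A \subseteq C$, write $C = \CC_G(v)$; then $v \in \mathcal{A}$, so $w \preceq_G v$ by (i), and transitivity of $\preceq_G$ yields $\CC_G(w) \subseteq \CC_G(v) = C$. Thus $\CC_G(w)$ is contained in every cluster that contains $A$, so it is the (unique) inclusion-minimal such cluster. Finally, for (iii) I would show that $\mathcal{A}$ coincides with the set of common ancestors of $\CC_G(w)$: the inclusion $A \subseteq \CC_G(w)$ makes every common ancestor of $\CC_G(w)$ lie in $\mathcal{A}$, and conversely if $v \in \mathcal{A}$ then $w \preceq_G v$ by (i), so $\CC_G(w) \subseteq \CC_G(v)$, i.e.\ $v$ is a common ancestor of $\CC_G(w)$. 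Since the two ancestor sets agree, so do their sets of $\preceq_G$-minimal elements, whence $\LCA_G(\CC_G(w)) = \LCA_G(A) = \{w\}$ and $\lca_G(\CC_G(w)) = w$.

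The only genuine content sits in the poset observation behind (i); everything else is unwinding the definitions of $\CC_G$ and $\preceq_G$ together with transitivity, so I do not expect a real obstacle. The one point to state carefully is that well-definedness is used exactly to upgrade ``$w$ is \emph{a} minimal common ancestor'' to ``$w$ lies below \emph{every} common ancestor'', which is what makes (i)--(iii) go through.
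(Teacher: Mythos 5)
Your proof is correct, and since the paper imports this observation from \cite[Obs.\ 2.5]{SCHS:24} without reproving it, there is nothing to compare against: your argument is the standard one, with all the content concentrated in the poset fact that the unique $\preceq_G$-minimal element of the finite set of common ancestors must lie below every common ancestor, after which (ii) and (iii) are definition-unwinding. One cosmetic point: the statement only assumes $G$ is a DAG on $X$, not a network, so $\rho_G$ need not exist; but nonemptiness of the set of common ancestors already follows from the hypothesis that $\lca_G(A)$ is well-defined, so your argument is unaffected.
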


\begin{definition}
A DAG $G$ on $X$ has the \emph{$\klca$-property} if $\lca_G(A)$ is well-defined for all non-empty subsets $A\subseteq X$ with $|A|\leq k$, 
Moreover, $G$ is an \emph{lca-network} if it has the $\klca$-property for all $k\leq |X|$.
A DAG $G$ is a \emph{global} lca-network, if $\lca_G(A)$ is well-defined for all non-empty subsets $A\subseteq V(G)$.
\end{definition}

\begin{figure}[t]
  \begin{center}
        \includegraphics[width=0.15\textwidth]{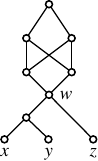}
          \caption{Shown is an lca-network $N$ on $X = \{x,y,z\}$ with pre-$|X|$-ary clustering
          system $\mathfrak{C}_N = \{\{x\},\{y\},\{x\},\{x,y\}, \{x,y,z\}\}$. None of the vertices
          $v$ in $N$ with $v\succ_N w$ serve as the $\lca$ for any two leaves.}
          \label{fig:lca-N}
  \end{center}
\end{figure}

\begin{theorem}[{\cite[L.\ 40 \& Prop.\ 11]{Hellmuth2023}}]\label{thm:Chat-lcaN}
A clustering system $C$ is closed if and only
if there is an lca-network $N$ with $\mathfrak{C} = \mathfrak{C}_N$. 
In this case,  $G\doteq \hasse(\mathfrak{C})$ is an lca-network.
Moreover, every rooted tree is an lca-network.
\end{theorem}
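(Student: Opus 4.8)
The plan is to establish the equivalence together with the ``in this case'' clause, and then handle the tree statement separately. For the implication ``closed $\Rightarrow$ lca-network'', the combinatorial heart is the following fact, which I would prove first: if $\mathfrak{C}$ is a closed clustering system, then for every non-empty $A\subseteq X$ there is a \emph{unique inclusion-minimal} $C\in\mathfrak{C}$ with $A\subseteq C$. This follows by a short induction on the size of the finite family $\{C\in\mathfrak{C}\mid A\subseteq C\}$, which is non-empty since $X$ lies in it: any two of its members intersect in a set still containing $A\neq\emptyset$, so closedness keeps the running intersection inside $\mathfrak{C}$; the intersection of the whole family is then the required cluster, and it is not merely minimal but the minimum among clusters containing $A$, hence unique.

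Next I would pass to $G\doteq\hasse(\mathfrak{C})$, recalling from the preliminaries that $\mathfrak{C}_G=\mathfrak{C}$. By construction of the Hasse diagram (and since the relabeling of the singleton clusters $\{x\}$ to $x$ changes nothing structurally), for vertices $u,v$ of $G$ one has $u\succeq_G v$ iff $\CC_G(v)\subseteq\CC_G(u)$, and a vertex $v$ is a common ancestor of a set $A\subseteq X$ of leaves iff $A\subseteq\CC_G(v)$. Consequently the $\preceq_G$-minimal common ancestors of $A$ are exactly the vertices whose clusters are inclusion-minimal among clusters containing $A$; by the previous paragraph there is precisely one, so $\lca_G(A)$ is well-defined for every non-empty $A\subseteq X$, i.e.\ $G$ is an lca-network. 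Since $|A|\le|X|$ is automatic for $A\subseteq X$, this covers the full lca-network property and simultaneously supplies the network needed for the ``only if'' direction of the equivalence, as well as the ``in this case'' statement.

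For the converse, let $N$ be an lca-network with $\mathfrak{C}_N=\mathfrak{C}$ and take $A,B\in\mathfrak{C}$ with $A\cap B\neq\emptyset$. Because $N$ is an lca-network, $v\coloneqq\lca_N(A\cap B)$ is well-defined; Observation~\ref{obs:deflcaY}(ii) identifies $\CC_N(v)$ as the inclusion-minimal cluster containing $A\cap B$, so $A\cap B\subseteq\CC_N(v)$. Choosing vertices $u_A,u_B$ with $\CC_N(u_A)=A$ and $\CC_N(u_B)=B$ and applying Observation~\ref{obs:deflcaY}(i) yields $v\preceq_N u_A$ and $v\preceq_N u_B$, hence $\CC_N(v)\subseteq A\cap B$; therefore $\CC_N(v)=A\cap B\in\mathfrak{C}$ and $\mathfrak{C}$ is closed. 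Finally, for the last sentence I would argue directly rather than via the equivalence: in a rooted tree the ancestors of any vertex form a chain, so the common ancestors of a non-empty $A\subseteq X$ are an intersection of such chains (all containing the root), again a finite non-empty chain, which has a unique minimum; hence $\lca_T(A)$ is always well-defined and $T$ is an lca-network.

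I expect the only genuine subtlety to lie in the bookkeeping of the second paragraph: making precise that the vertices of $G\doteq\hasse(\mathfrak{C})$ biject with $\mathfrak{C}$, that ancestry in $G$ translates faithfully into set inclusion, and that the singleton relabeling is harmless — everything else is either an easy induction or a direct appeal to Observation~\ref{obs:deflcaY}. A minor point worth stating explicitly is that the induction establishing pre-$|X|$-arity uses $A\neq\emptyset$ at every step, which is exactly what keeps each partial intersection non-empty and hence inside the closed system $\mathfrak{C}$.
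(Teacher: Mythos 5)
The paper does not prove Theorem~\ref{thm:Chat-lcaN}: it imports the statement verbatim from \cite[L.~40 \& Prop.~11]{Hellmuth2023}, so there is no in-paper argument to compare against. Your proposal is a correct, self-contained proof: the induction showing that a closed clustering system is pre-$|X|$-ary, the identification of the $\preceq_G$-minimal common ancestors of $A$ in $G\doteq\hasse(\mathfrak{C})$ with the inclusion-minimal clusters containing $A$ (using that reachability in a Hasse diagram coincides with set inclusion, so uniqueness of the minimal cluster transfers to uniqueness of the LCA), the converse via Observation~\ref{obs:deflcaY}(i)--(ii) giving $\CC_N(\lca_N(A\cap B))=A\cap B$, and the chain argument for rooted trees are all sound. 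This is essentially the standard route the cited source takes, so nothing further is needed.
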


We refer to \cite{SCHS:24} for more details on the relationship between
DAGs or networks with $\klca$-property  and their underlying clustering systems. 
An example of a lca-network is provided in Fig.~\ref{fig:lca-N}.
By Obs.\ \ref{obs:deflcaY}(ii), $\CC(\lca_G(A))$ is the unique inclusion-minimal cluster in $\mathfrak{C}_G$ containing $A$ for all non-empty subsets $A\subseteq X$ with $|A|\leq k$.
Hence $\mathfrak{C}_N$  is pre-$k$-ary.  
\begin{observation}\label{obs:2lca->pre-bin}
For every DAG $G$ with $\klca$-property, 
the clustering system $\mathfrak{C}_G$ is pre-$k$-ary. 
\end{observation}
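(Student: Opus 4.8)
The plan is to reduce the statement directly to Observation~\ref{obs:deflcaY}(ii), which already packages the essential content. First I would note that, since $G$ is a DAG on $X$, the family $\mathfrak{C}_G=\{\CC_G(v)\mid v\in V(G)\}$ is a set system on $X$ (indeed a clustering system, as recalled in the preliminaries), so that the notion of being pre-$k$-ary applies to it. Then fix an arbitrary non-empty subset $A\subseteq X$ with $|A|\leq k$; the goal is to exhibit a \emph{unique inclusion-minimal} member of $\mathfrak{C}_G$ containing $A$.

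Next I would use the hypothesis: since $G$ has the $\klca$-property, $\lca_G(A)$ is well-defined, i.e.\ $\lca_G(A)=v$ for a single vertex $v\in V(G)$. Thus $\CC_G(v)=\CC_G(\lca_G(A))\in\mathfrak{C}_G$ is a cluster containing $A$ (it contains $A$ because $v$ is a common ancestor of the elements of $A$). Now invoking Observation~\ref{obs:deflcaY}(ii) directly yields that $\CC_G(\lca_G(A))$ is not merely \emph{a} cluster containing $A$, but the \emph{unique} inclusion-minimal cluster in $\mathfrak{C}_G$ with that property. Since this holds for every non-empty $A\subseteq X$ with $|A|\leq k$, the set system $\mathfrak{C}_G$ satisfies the definition of being pre-$k$-ary, which completes the argument.

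There is essentially no obstacle here: the whole content is already delivered by Observation~\ref{obs:deflcaY}(ii), and the proof is a one-line instantiation of it once the $\klca$-property is used to guarantee that $\lca_G(A)$ exists. The only minor point worth a sentence is that no assumption beyond ``$G$ is a DAG on $X$ with the $\klca$-property'' is needed, and that specializing to $k=|X|$ recovers the fact that the clustering system of an lca-network is pre-$|X|$-ary, matching the discussion accompanying Figure~\ref{fig:lca-N}.
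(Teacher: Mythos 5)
Your proposal is correct and matches the paper's own justification exactly: the paper derives this observation in the sentence immediately preceding it by applying Observation~\ref{obs:deflcaY}(ii) to each non-empty $A\subseteq X$ with $|A|\leq k$, using the $\klca$-property only to guarantee that $\lca_G(A)$ is well-defined. Nothing further is needed.
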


We note that in case the network $N$ is understood from context, we drop the subscript from all notation, so that e.g. $\lca_N(x,y)$ becomes $\lca(x,y)$.

\begin{remark}
In all drawings that include networks, we omitted drawing the directions of edges as arcs, that is, edges
are  drawn as simple lines. However, in all cases, the directions are implicitly given by directing the edges
``away'' from the root.
\end{remark}


\section{Strudigrams and Explanations by Labeled Networks}
\label{sec:strudi-N}

We will consider \emph{labeled} networks $(N,t)$, that is, networks $N=(V,E)$ equipped with a
(vertex-)\emph{labeling} $t\colon V^0(N)\to\Upsilon$. If $N$ is a network on $X$ that has the
$\2lca$-property, then we can derive from $(N,t)$ the ordered triple $\mathcal{S}(N,t) =
(X,\Upsilon,\sigma)$, where $\Upsilon$ is the image of $t$ and $\sigma\colon \binom{X}{2}\to
\Upsilon$ is the map defined by $\sigma(\{x,y\})=t(\lca(x,y))$ for all distinct $x,y\in X$.

An undirected graph $G=(X,E)$ is a \emph{cograph}, if there is a labeled tree $(T,t)$ on $X$, called
\emph{cotree}, with $t\colon V^0(T)\to \{0,1\}$ such that $x$ and $y$ are adjacent in $G$ if and only if
$t(\lca(x,y))=1$ for all distinct $x,y\in X$ \cite{Corneil:85}. In other words, the structure of a
cograph $G$ is entirely captured by its cotree $(T,t)$. One easily observe that there is direct
translation of any cograph $G=(X,E)$ into a triple $\SM_G= (X,\{0,1\},\sigma)$ where $\sigma\colon
\binom{X}{2}\to \{0,1\}$ such that $\sigma(\{x,y\})=1$ precisely if $\{x,y\}\in E$. Hence, $G$ is a
cograph if and only if $\SM_G$ can be ``explained'' by a labeled tree $(T,t)$, i.e., $\SM_G =
\mathcal{S}(T,t)$. 
As already outlined in the introduction, there is a natural generalization of the latter 
concept by means of symbolic dated maps. 
In particular, a symbolic dated map can be  explained by a labeled tree $(T,t)$ precisely
if it is a symbolic ultrametric \cite{BD98}. Our aim is to generalize these 
results and to consider labeled networks rather than trees. 
To achieve this goal, we
introduce a more universal structure: \emph{strudigrams} which are triples $\SM = (X, \Upsilon,
\sigma)$, where $V(\SM)\coloneqq X$ and $\Upsilon(\SM)\coloneqq \Upsilon$ are non-empty finite sets
and where $\sigma\colon \binom{X}{2} \to \Upsilon$ is a map that assigns a unique label
$\sigma(\{x,y\})\in \Upsilon$ to each two-element subset $\{x,y\}$ of $X$. The elements in $X$ are
called \emph{vertices} of $\SM$, and we call $|X|$ the \emph{size} of $\SM$. For simplicity, we put
$\sigma(xy) = \sigma(yx) \coloneqq \sigma(\{x,y\})$. 

\begin{definition}\label{def:strudi-explain}
A strudigram  $\SM = (X,\Upsilon,\sigma)$ is \emph{explained by a network $(N,t)$} if 
$N$ has the $\2lca$-property and $\SM = \mathcal{S}(N,t)$. 
\end{definition}

\begin{figure}
	\centering
	\includegraphics[width=.7\textwidth]{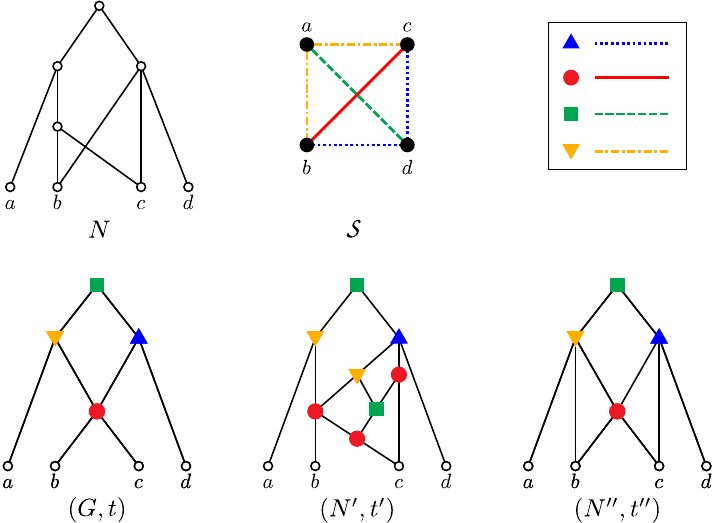}
	\caption{The network $N$ has clustering system $\mathfrak{C}_N = \{\{a\},\{b\},\{c\},\{d\},\{b,c\},\{a,b,c\},\{b,c,d\},X\}$
				on $X  = \{a,b,c,d\}$. Since $N$  does not have the $\2lca$-property, it does not  explain any strudigram 
					according to Def.\ \ref{def:strudi-explain}. Moreover, a strudigram $\SM$ on $X  = \{a,b,c,d\}$ is depicted as an edge-colored graph.
					It contains an rainbow-triangle and, by 	Theorem \ref{thm:char-treeEx}, it cannot be explained by a labeled tree. 
					The three networks $G\doteq \hasse(\mathfrak{C}_N)$, $N'$ and $N''$ have the same clustering system $\mathfrak{C}_N$
					and satisfy the $\2lca$-property. 
					In particular, the labeled versions  $(G,t)$, $(N',t')$ and $(N'',t'')$  all explain $\SM$, with edge-colors corresponding to the respective label as shown in the legend. 
					Note that $G$ is a galled-tree while $N'$ and $N''$ are not.
				} 
	\label{fig:exmpl-sketch1}
\end{figure}

For an illustrative example of Definition \ref{def:strudi-explain}, see Figure \ref{fig:exmpl-sketch1}.
As already outlined in introduction, the word \emph{strudigram} is composed of the first parts of the words \textbf{stru}ctures,
\textbf{di}ssimilarites and \textbf{gra}phs to cover the fact that strudigrams
can be used to represent different, yet similarly flavored combinatorial objects. Each strudigram
$\SM = (X, \Upsilon, \sigma)$ gives, for each subset $\Upsilon'\subseteq \Upsilon$, rise to a unique 
edge-colored undirected graph $G = (X, E)$ with $E\subseteq \binom{X}{2}$ and
 a coloring $c\colon E \to \Upsilon'$. In
this case, $E$ represents the set of all $\{x, y\} \in \binom{X}{2}$ for which $\sigma(xy)\in \Upsilon'$
and $c(\{x, y\}) \coloneqq \sigma(xy)$ for all $\{x, y\} \in E$. In a more direct way, $\SM$ yields
an undirected complete edge-colored graph $K_{|X|} = (X, \binom{X}{2})$ with color-map $c = \sigma$, 
structures that have been considered e.g.\ in \cite{Gurvich:09}.

Moreover, $\SM$ also defines a (symmetric) 2-structure $g=(X,\P)$, where
$\P=\{X_i\}_{i\in\Upsilon}$ is a partition of $(X\times X)\setminus \{(v,v)\mid v\in X\}$
such that $(x,y)\in X_i$ precisely if $\sigma(xy)=i$
\cite{Ehren1990A,Ehren1990B,ehrenfeucht1999theory}. Since $(x,y)$ and $(y,x)$ always lie in the same
set of $\P$, $g$ is a symmetric 2-structure . Naturally, $g$ may be extended to a
\textit{labeled} symmetric 2-structure $(X,\P,\Upsilon,\delta)$, where
$\delta:\mathcal{P}\to\Upsilon$ assigns a label to each set in the partition ($\delta(X_i)=i$ would
be a reasonable choice) \cite{Engelfriet1996}. Despite its symmetry, the 2-structure operates on
elements of $X\times X$ instead of $\binom{X}{2}$, distinguishing it from strudigrams.

Furthermore, $\SM$ defines symmetric symbolic maps $\delta\colon X\times X \to \Upsilon$ as well as 2- dissimilarities $\delta\colon X\cup \binom{X}{2} \to \Upsilon$ determined by $\delta(x, y) = \sigma(xy)$ for distinct $x,y\in X$ and by putting $\delta(x,x) = \odot$, resp., $\delta(x) = \odot$
for some symbol $\odot \notin \Upsilon$ \cite{HSW:17,BD98,Hellmuth:13a,Huber2018}. Conversely, each
edge-colored undirected (complete) graph, symmetric 2-structure, or symmetric symbolic dating map and
2-dissimilarity defines a strudigram in a natural way. In other words, strudigrams help represent
classical combinatorial objects, and the results established here for strudigrams hold, therefore,
for these objects as well. Vice versa, results that have been established for edge-colored
(complete) graphs, symmetric 2-structures, or symmetric symbolic dating maps hold for strudigrams.

A strudigram $(X,\Upsilon,\sigma)$ contains an induced $P_4$ if there are four vertices $a,b,c,d\in X$
such that $k\coloneqq \sigma(ab)=\sigma(bc)=\sigma(cd)$ and $k\notin \{\sigma(ac), \sigma(ad),\sigma(bd)\}$. 
$P_4$-free strudigrams are strudigrams without induced $P_4$s. 
A strudigram $(X,\Upsilon,\sigma)$ contains a rainbow-triangle, if there are three vertices $a,b,c\in X$
such that $\sigma(ab)$, $\sigma(ac)$, $\sigma(bc)$ are pairwise distinct. 
Rainbow-triangle-free strudigrams are strudigrams without rainbow-triangles. 
Note that rainbow-triangle-free and $P_4$-free strudigrams are also known as
symbolic ultrametrics (cf.\ \cite[Prop.\ 3]{Hellmuth:13a})
or, equivalently,  as symmetric uniformly non-primitive 2-structures \cite{HSW:17,Engelfriet1996}. 

\begin{theorem}[{\cite[Prop.\ 1 \& 3]{Hellmuth:13a}}]\label{thm:char-treeEx}
A strudigram can be explained by a labeled tree if and only if 
it is rainbow-triangle-free and $P_4$-free. 
\end{theorem}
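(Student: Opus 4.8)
The plan is to prove both directions, with the forward implication reading off directly from the tree, while the converse is an induction on $|X|$ whose inductive step rests on a ``top-level split'' claim. Throughout I use that being rainbow-triangle-free and $P_4$-free is inherited by induced substrudigrams (the forbidden configurations only involve the three, resp.\ four, named vertices, whose induced colours are unchanged).

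\emph{($\Rightarrow$)} Suppose $\SM=\mathcal{S}(T,t)$ for a labeled tree $(T,t)$ on $X$. For rainbow-triangle-freeness, take $a,b,c$ and set $v=\lca(a,b,c)$; the child-subtrees of $v$ partition $\{a,b,c\}$, so either all three lie in distinct subtrees, in which case $\lca(a,b)=\lca(a,c)=\lca(b,c)=v$, or two of them, say $b,c$, lie in one subtree and $a$ in another, in which case $\lca(a,b)=\lca(a,c)=v$. In both cases $|\{\sigma(ab),\sigma(ac),\sigma(bc)\}|\le 2$. For $P_4$-freeness, suppose $a,b,c,d$ induced a $P_4$ with $\sigma(ab)=\sigma(bc)=\sigma(cd)=k$ and $\sigma(ac),\sigma(ad),\sigma(bd)\neq k$, and put $v=\lca(a,b,c,d)$. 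If $b,c$ lie in the same child-subtree $B$ of $v$, then $a\notin B$ would give $\lca(a,b)=\lca(a,c)=v$, hence $\sigma(ab)=\sigma(ac)$, contradicting $\sigma(ab)=k\neq\sigma(ac)$, so $a\in B$; symmetrically $d\notin B$ would give $\sigma(bd)=\sigma(cd)=k$, so $d\in B$ — contradicting $v=\lca(a,b,c,d)$. Hence $b,c$ lie in distinct subtrees, so $\lca(b,c)=v$ and $t(v)=\sigma(bc)=k$; now $a$ must lie in $c$'s subtree (else $\lca(a,c)=v$, forcing $\sigma(ac)=k$) and $d$ must lie in $b$'s subtree (else $\lca(b,d)=v$, forcing $\sigma(bd)=k$), whence $\lca(a,d)=v$ and $\sigma(ad)=t(v)=k$, a contradiction.

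\emph{($\Leftarrow$)} We induct on $|X|$, the cases $|X|\le 2$ being immediate (the trivial tree, resp.\ a single inner vertex labeled $\sigma(xy)$). For $|X|=n\ge 3$ the crucial step is the following \textbf{claim}: \emph{if $\SM$ is rainbow-triangle-free and $P_4$-free with $|X|\ge 2$, then there are $k\in\Upsilon$ and a partition $\{M_1,\dots,M_p\}$ of $X$ with $p\ge 2$ such that $\sigma(x,y)=k$ for all $x\in M_i$, $y\in M_j$ with $i\ne j$.} Granting this, let $(T_i,t_i)$ explain the induced substrudigram $\SM[M_i]$; each is rainbow-triangle-free and $P_4$-free and has $<n$ vertices (as $p\ge 2$), so it exists by induction (a singleton $M_i$ gives the trivial tree). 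Form $T$ by joining the roots of all $(T_i,t_i)$ to a new root $r$ with $t(r)=k$. Then $\lca_T(x,y)=\lca_{T_i}(x,y)$ when $\{x,y\}\subseteq M_i$ and $\lca_T(x,y)=r$ otherwise, so $t(\lca_T(x,y))=\sigma(x,y)$ in both cases; thus $(T,t)$ explains $\SM$.

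It remains to prove the claim. View $\sigma$ as an edge-colouring of $K_{|X|}$; rainbow-triangle-freeness makes this a Gallai colouring, so by Gallai's structure theorem $X$ admits a partition $\{N_1,\dots,N_q\}$ with $q\ge 2$ such that only one colour occurs between any two parts and the induced ``quotient'' colouring on the $q$ parts uses at most two colours. If it uses only one colour, we are done. Otherwise the quotient, realised as the substrudigram induced on a transversal $\{m_1,\dots,m_q\}$, is a $P_4$-free $2$-coloured strudigram, i.e.\ (identifying one colour with edges) a cograph on $q\ge 2$ vertices; such a graph or its complement is disconnected, which groups $N_1,\dots,N_q$ into $p\ge 2$ blocks between which a single colour occurs, and merging accordingly proves the claim. \emph{Main obstacle.} The heart of the proof is this claim — forcing the two forbidden-configuration conditions to yield a monochromatic top-level split; the route above isolates it into Gallai's theorem (from rainbow-triangle-freeness) plus the classical cograph dichotomy (from $P_4$-freeness). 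An alternative is to strengthen the induction hypothesis so as to reprove a Gallai-type structure directly for $P_4$-free Gallai colourings, avoiding an external citation at the cost of a longer induction.
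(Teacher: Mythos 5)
Your proof is correct. Note, however, that the paper does not prove this statement at all: it is imported verbatim as a citation to \cite[Prop.\ 1 \& 3]{Hellmuth:13a}, where the argument runs through symbolic ultrametrics --- one shows that each colour class must induce a cograph and that condition (U3) forbids rainbow triangles, and the explaining tree is then assembled from the (co)tree/modular-decomposition structure of the colour classes. Your route is genuinely different and self-contained modulo two classical facts: the forward direction is a clean case analysis on how the child-subtrees of $\lca(a,b,c)$, resp.\ $\lca(a,b,c,d)$, partition the relevant leaves (all cases check out), and the converse is an induction whose key claim --- that a rainbow-triangle-free, $P_4$-free strudigram on at least two vertices admits a monochromatic top-level split --- you obtain from Gallai's structure theorem for rainbow-triangle-free colourings of complete graphs combined with the cograph connectivity dichotomy. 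That claim is exactly the assertion that such a strudigram is a $k$-series (non-prime) strudigram, i.e.\ the content of the implication (6)$\Rightarrow$(2) in the paper's Theorem~\ref{thm:unp<=>treeExpl}, so your argument in effect reproves a piece of that theorem rather than borrowing it; what it buys is a direct, citation-light proof at the cost of invoking Gallai's theorem, which is itself a nontrivial (though classical) result whose proof again uses the two-colour/cograph machinery. The only point worth flagging is cosmetic: you should state explicitly which version of Gallai's theorem you use (partition into at least two parts, one colour between any two parts, at most two colours in the quotient, valid for all complete graphs on at least two vertices), since the claim fails without the ``at most two colours in the quotient'' clause that makes the cograph dichotomy applicable.
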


By Theorem \ref{thm:char-treeEx},  plenty of strudigrams exist that cannot be explained by any labeled tree. 
This motivates the question as whether there are labeled networks that can explain any given strudigram.
The next result provides an affirmative answer. 
\begin{proposition}\label{prop:SM-hasExpl}
	For every strudigram $\SM = (X,\Upsilon,\sigma)$ there is a phylogenetic lca-network $(N,t)$ on $X$ with
	$O(|X|^2)$ vertices that explains it. 
\end{proposition}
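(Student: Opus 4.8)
The plan is to exhibit an explicit network built from all two-element subsets of $X$; we may assume $|X|\ge 2$, since the case $|X|=1$ is trivial. First I would set
\[
\mathfrak{C} \;\coloneqq\; \{\{x\}\mid x\in X\}\ \cup\ \binom{X}{2}\ \cup\ \{X\},
\]
which is clearly a clustering system on $X$, and then verify that $\mathfrak{C}$ is \emph{closed}: if $A,B\in\mathfrak{C}$ with $A\cap B\neq\emptyset$, then either one of $A,B$ is a singleton or equals $X$ — in which case $A\cap B$ is a singleton or one of $A,B$ — or both $A,B$ lie in $\binom{X}{2}$, in which case $A\cap B$ is a singleton or equals $A=B$; in every case $A\cap B\in\mathfrak{C}$. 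By Theorem~\ref{thm:Chat-lcaN} it then follows that $N\doteq\hasse(\mathfrak{C})$ is an lca-network with $\mathfrak{C}_N=\mathfrak{C}$; in particular $N$ is a network on $X$ having the $\2lca$-property.

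Next I would identify the relevant least common ancestors and build the labeling. For distinct $x,y\in X$, Observation~\ref{obs:deflcaY}(ii) gives that $\CC_N(\lca_N(x,y))$ is the unique inclusion-minimal cluster of $\mathfrak{C}_N=\mathfrak{C}$ containing $\{x,y\}$, which by construction of $\mathfrak{C}$ is the $2$-element set $\{x,y\}$ itself when $|X|\ge 3$ (and is the root $X=\{x,y\}$ when $|X|=2$). Hence $\lca_N(x,y)$ is precisely the inner vertex of $N$ corresponding to the cluster $\{x,y\}$. I would then define $t\colon V^0(N)\to\Upsilon$ by $t(v)\coloneqq\sigma(xy)$ whenever $v$ corresponds to some cluster $\{x,y\}\in\binom{X}{2}$, and, if $|X|\ge 3$, by letting $t(\rho_N)$ be any fixed label occurring as some $\sigma(x'y')$ (the root is the $\lca$ of no pair, so its label is irrelevant for $\sigma$). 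With this choice $t(\lca_N(x,y))=\sigma(xy)$ for all distinct $x,y\in X$, and the image of $t$ equals $\sigma\big(\binom{X}{2}\big)$, which is $\Upsilon$ under the standard convention that the label set of a strudigram consists of the labels actually assigned by $\sigma$. Thus $\mathcal{S}(N,t)=\SM$, i.e.\ $(N,t)$ explains $\SM$.

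It remains to check the size and structure of $N$. Since the vertices of $N$ are the clusters in $\mathfrak{C}$, we have $|V(N)|\le |X|+\binom{|X|}{2}+1\in O(|X|^2)$. To see that $N$ is phylogenetic it suffices to rule out a vertex of out-degree exactly one: leaves have out-degree $0$; a vertex corresponding to a $2$-element cluster $\{x,y\}$ has exactly the two children $\{x\}$ and $\{y\}$, hence out-degree $2$; and for $|X|\ge 3$ the root $X$ has as children exactly the sets in $\binom{X}{2}$, because no cluster $C\in\mathfrak{C}$ satisfies $\{x,y\}\subsetneq C\subsetneq X$, so $\outdeg(\rho_N)=\binom{|X|}{2}\ge 3$ (and for $|X|=2$ the root has out-degree $2$). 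Hence $N$ is a phylogenetic lca-network on $X$ with $O(|X|^2)$ vertices that explains $\SM$. I expect the only mildly delicate points to be these two small case analyses — the closedness of $\mathfrak{C}$ and the description of the children of the root needed to exclude out-degree one; once Theorem~\ref{thm:Chat-lcaN} supplies the $\2lca$-property together with $\mathfrak{C}_N=\mathfrak{C}$, the remainder is routine.
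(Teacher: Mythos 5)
Your proposal is correct and follows essentially the same route as the paper: both build the clustering system consisting of the singletons, all two-element subsets, and $X$ itself, invoke Theorem~\ref{thm:Chat-lcaN} via closedness to get an lca-network, identify $\lca_N(x,y)$ with the vertex for $\{x,y\}$ via Observation~\ref{obs:deflcaY}(ii), and label accordingly. The only cosmetic difference is that you verify the phylogenetic property by a direct out-degree check (and treat the $|X|=2$ case explicitly), whereas the paper cites an external lemma from \cite{Hellmuth2023} for this.
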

\begin{proof}
	Let $\SM = (X,\Upsilon,\sigma)$ be a strudigram and consider the set system $\mathfrak{Q}
	\coloneqq \{\{x,y\}\mid \text{not necessarily distinct } x,y\in X\} \cup \{X\}$. By construction,
	$\mathfrak{Q}$ is a clustering system. Consider the network $N\doteq \Hasse(\mathfrak{Q})$ that
	is obtained from the Hasse diagram $\Hasse(\mathfrak{Q})$ by relabeling all vertices $\{x\}$ with
	$x$, ensuring that $L(N)=X$. This together with \cite[Lem.~22]{Hellmuth2023} implies that $N$ is
	a phylogenetic network on $X$. By construction, $N$ contains $1+|X|+\frac{|X|(|X|-1)}{2} \in
	O(|X|^2)$ vertices. One easily observes that $\mathfrak{Q}$ is closed. 
	Theorem \ref{thm:Chat-lcaN} implies that $N$ is an lca-network. 
	In particular, for any vertices $x,y\in X$, the set $\{x,y\}$ is the unique
	inclusion-minimal cluster in $\mathfrak{Q}$ containing $x$ and $y$.
	By Obs.~\ref{obs:deflcaY}, we have $\lca_{N}(x,y)=v$ where $v$
	is the unique vertex in $N$ that satisfies $\CC(v)=\{x,y\}$. This, in turn, implies that
	for all distinct 2-element subsets $\{x,y\}$ and $\{a,b\}$ it holds that $\lca_{N}(x,y)\neq
	\lca_{N}(a,b)$. We can now equip $N$ with a labeling $t$ defined by $t(\lca_{N}(x,y))\coloneqq
	\sigma(xy)$ for all $x,y\in X$ and $t(\rho_{N})\coloneqq \upsilon$ for some
	$\upsilon\in\Upsilon$. By the latter arguments, $(N, t)$ is a phylogenetic lca-network that explains
	$\SM$. 	
\end{proof}
We note that it has already been shown in \cite{BHS:21} that every strudigram can be explained
by so-called labeled half-grids which also contain  $O(|X|^2)$ vertices. 
Note also that if $\sigma\colon \binom{X}{2} \to \Upsilon \coloneqq \{1,2,\dots, \frac{|X|(|X|-1)}{2}\}$
is surjective, then $O(|X|^2)$ vertices are necessary in any network that explain $\SM = (X,\Upsilon,\sigma)$. 

	Essential for networks explaining strudigrams is the requirement that they satisfy the $\2lca$-property.
	By Proposition \ref{prop:SM-hasExpl}, there is even always an lca-network that explains a strudigram. However,
    the network constructed in the proof of Proposition \ref{prop:SM-hasExpl} is rather ``dense'', and simpler
    constructions of lca-networks may exist. A nice feature of lca-networks $(N,t)$ explaining $\SM$
    is that labeled lca-``sub''networks $(N',t')$ can be constructed from $(N,t)$ to explain
    substrudigrams of $\SM$.
	 As we shall see, this result will become quite handy later in Section \ref{sec:gatex}. 

\begin{proposition}\label{prop:lcaN-explain}
	Let  $(N,t)$ be a labeled lca-network that explains $\SM$ and $W\subseteq V(\SM)$ be a non-empty subset. 
    Put $\mathfrak{C}'\coloneqq \{C \mid C\in \mathfrak{C}_N \text{ and } C \subseteq  W\}$. 
    Then, $\mathfrak{C}'$ is a clustering system on $W$ and 
	$G\doteq \Hasse(\mathfrak{C}')$ is an lca-network that can be equipped with a labeling
	$t'$ such that $(G,t')$ explains $S[W]$.	
\end{proposition}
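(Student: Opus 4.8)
The plan is to realise $\mathfrak{C}'$ as a \emph{closed} clustering system on $W$, feed it to Theorem~\ref{thm:Chat-lcaN} to obtain an lca-network, and then pull the labeling $t$ back along the inclusion $\mathfrak{C}'\subseteq\mathfrak{C}_N$. Since $(N,t)$ is an lca-network, Theorem~\ref{thm:Chat-lcaN} first gives that $\mathfrak{C}_N$ is closed, which I will use throughout. For the clustering-system axioms on $W$, the requirements $\emptyset\notin\mathfrak{C}'$ and $\{w\}\in\mathfrak{C}'$ for every $w\in W$ are inherited verbatim from $\mathfrak{C}_N$ (each $\{w\}\in\mathfrak{C}_N$ satisfies $\{w\}\subseteq W$). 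The only axiom carrying real content is that the ground set $W$ itself lies in $\mathfrak{C}'$, equivalently $W\in\mathfrak{C}_N$; this is the load-bearing point of the statement, and I would fix a vertex $w^{\ast}$ with $\CC_N(w^{\ast})=W$ at the outset, since both the top-element axiom and the later label computation rest on $W$ being realised as a cluster of $N$.

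Next I would check that $\mathfrak{C}'$ is closed, which is immediate from closedness of $\mathfrak{C}_N$: if $A,B\in\mathfrak{C}'$ with $A\cap B\neq\emptyset$, then $A,B\in\mathfrak{C}_N$ forces $A\cap B\in\mathfrak{C}_N$, while $A\cap B\subseteq A\subseteq W$ yields $A\cap B\in\mathfrak{C}'$. Thus $\mathfrak{C}'$ is a closed clustering system on $W$, and Theorem~\ref{thm:Chat-lcaN} immediately delivers that $G\doteq\Hasse(\mathfrak{C}')$ is an lca-network with $\mathfrak{C}_G=\mathfrak{C}'$ and leaf set $W$ (its minimal clusters being exactly the singletons $\{w\}$, which are relabeled to the elements of $W$).

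For the labeling I set, for each inner vertex $v$ of $G$ with $C\coloneqq\CC_G(v)\in\mathfrak{C}'$, the value $t'(v)\coloneqq t(\lca_N(C))$, which is well-defined because $N$ is an lca-network and because $v\mapsto\CC_G(v)$ is a bijection from the inner vertices of $G$ onto the non-singleton members of $\mathfrak{C}'$. To see that $(G,t')$ explains $\SM[W]$, fix distinct $x,y\in W$. By Obs.~\ref{obs:deflcaY}(ii) applied in $G$, the cluster $\CC_G(\lca_G(x,y))$ is the inclusion-minimal member of $\mathfrak{C}'$ containing $\{x,y\}$; by the same observation applied in $N$, $\CC_N(\lca_N(x,y))$ is the inclusion-minimal member of $\mathfrak{C}_N$ containing $\{x,y\}$. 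Provided these two minimal clusters agree, I obtain $\lca_N(\CC_G(\lca_G(x,y)))=\lca_N(\CC_N(\lca_N(x,y)))=\lca_N(x,y)$ via Obs.~\ref{obs:deflcaY}(iii), and therefore $t'(\lca_G(x,y))=t(\lca_N(x,y))=\sigma(xy)$, exactly as the definition of ``explains'' demands.

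The main obstacle is precisely this coincidence of the two minimal clusters, i.e.\ proving that the inclusion-minimal $\mathfrak{C}_N$-cluster containing a pair $\{x,y\}\subseteq W$ already lies inside $W$ and hence belongs to $\mathfrak{C}'$. Here the reduction to $W=\CC_N(w^{\ast})$ is what makes the argument go through: from $x,y\preceq_N w^{\ast}$ it follows that $w^{\ast}$ is a common ancestor of $x$ and $y$, so $\lca_N(x,y)\preceq_N w^{\ast}$ and $\CC_N(\lca_N(x,y))\subseteq\CC_N(w^{\ast})=W$; thus $\CC_N(\lca_N(x,y))\in\mathfrak{C}'$, and since $\mathfrak{C}'\subseteq\mathfrak{C}_N$, minimality in the larger system $\mathfrak{C}_N$ transfers to minimality in $\mathfrak{C}'$, giving the desired equality. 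I would take particular care at this step, because it is exactly where the hypothesis that $W$ is a cluster of $N$ is indispensable: without it the literal $\mathfrak{C}'$ need neither contain $W$ as a top element nor separate pairs of differing $\sigma$-value beneath a common $\lca$ in $G$, so the entire construction hinges on first securing $W\in\mathfrak{C}_N$.
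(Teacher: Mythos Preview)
Your argument pivots on the assumption $W\in\mathfrak{C}_N$, which you flag as ``the load-bearing point'' and then simply postulate by choosing $w^\ast$ with $\CC_N(w^\ast)=W$. But no such $w^\ast$ need exist: the hypotheses say only that $W$ is a non-empty subset of $V(\SM)$, not a cluster of $N$. For a concrete failure, let $(T,t)$ be the caterpillar on $\{a,b,c\}$ with cherry $\{a,b\}$, so that $\mathfrak{C}_T=\{\{a\},\{b\},\{c\},\{a,b\},\{a,b,c\}\}$; taking $W=\{a,c\}$ yields $\mathfrak{C}'=\{\{a\},\{c\}\}$, which is not a clustering system on $W$ because $W\notin\mathfrak{C}'$. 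So the statement, read literally with the subset-definition of $\mathfrak{C}'$, fails for general $W$, and your proof establishes only the restricted case $W\in\mathfrak{C}_N$.

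The paper takes a different route that does handle arbitrary $W$: it reduces by induction to the single-vertex-removal case $W=X\setminus\{x\}$ and, in that step, quietly replaces the subset family by $\mathfrak{C}'\coloneqq\mathfrak{C}_N-x=\{C\setminus\{x\}:C\in\mathfrak{C}_N,\ C\neq\{x\}\}$. Lemma~\ref{lem:hereditary-cluster} shows this is a closed clustering system on $X\setminus\{x\}$ (in particular it contains $X\setminus\{x\}$), after which Theorem~\ref{thm:Chat-lcaN} supplies the lca-network. The labeling $t'(v)=t(\lca_N(\CC_G(v)))$ and the matching of minimal clusters then proceed in the spirit of your sketch, but the case analysis is heavier because a member of $\mathfrak{C}_N-x$ need not itself lie in $\mathfrak{C}_N$ (only it or its union with $\{x\}$ does), forcing a split on whether $x\in\CC_N(\lca_N(a,b))$. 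In effect the paper's proof and its stated $\mathfrak{C}'$ do not quite match; your cleaner argument proves the proposition under the extra hypothesis $W\in\mathfrak{C}_N$, while for arbitrary $W$ you need the vertex-removal machinery (or something equivalent) that the paper actually uses.
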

\begin{proof}
	Let $\SM = (X,\Upsilon,\sigma)$ be a strudigram that is explained by
	a labeled lca-network $(N,t)$ on $X$, and let $W \subseteq X$ be a non-empty set. 
	If $|X|=1$, then $W=X$ must hold. In this case, $G$ is the single vertex network
	and $(G,t')$ trivially explains $\SM[W]=\SM$ for any labeling $t'$. Hence, assume that $|X|>1$. 
	Since any non-empty subset $W\subseteq V(\SM)$ can be obtained from $X$ by
	removing vertices in $X\setminus W$
	one by one and since, by Prop.~\ref{prop:SM-hasExpl}, every (sub)strudigram $\SM[W]$
	can be explained by some labeled lca-networks, it is sufficient to show that 
	the statement is true for $W\coloneqq X\setminus\{x\}$ and  some $x\in X$. 
	
	Put $\mathfrak{C} =\mathfrak{C}_N$ and $\mathfrak{C}' \coloneqq \mathfrak{C}-x$. 
	By Lemma~\ref{lem:hereditary-cluster}, $\mathfrak{C}'$ is a clustering system on $W$. 
	Since 	$N$ is an lca-network,  Theorem \ref{thm:Chat-lcaN} implies that 
	$\mathfrak{C}$ must be closed. By Lemma \ref{lem:hereditary-cluster},
	$\mathfrak{C}'$ must be closed.
	Moreover, Theorem \ref{thm:Chat-lcaN} implies
	that $G\doteq \Hasse(\mathfrak{C}')$ is an lca-network on $W$.
	
	Since $N$ is an lca-network, 
	$\lca_N(\CC_G(v))$ is well-defined in $N$ for each $v\in V^0(G)$. Let $w_v \coloneqq
	\lca_N(\CC_G(v))$ for all $v\in V^0(G)$. Based on the latter,  we can establish a well-defined
	labeling $t'\colon V^0(G)\to \Upsilon$ by putting, for all $v\in V^0(G)$, \[t'(v) = t(w_v).\]  
	It remains to show that $(G,t')$ explains $\SM[W]$. Hence, let $a,b\in W$ be distinct. Since $N$
	has, in particular, the $\2lca$-property, $u\coloneqq \lca_N(a,b)$ is well-defined. Obs.\
	\ref{obs:deflcaY}(ii) together with $u = \lca_N(a,b)$ implies that $\CC_N(u)$ is the unique
	inclusion-minimal cluster in $\mathfrak{C}$ that contains $a,b$. Similarly, 
	$G$ has the $\2lca$-property and thus, $v\coloneqq \lca_G(a,b)$ is well-defined.

	In the following, we show that $\CC_N(u)\setminus \{x\} = \CC_G(v)$. To this end, we show first
	that $C' \coloneqq \CC_N(u)\setminus \{x\} \in \mathfrak{C}'$ is an inclusion-minimal cluster
	in $\mathfrak{C}'$ that contains $a,b$. We consider the two cases $x\notin \CC_N(u)$ and $x\in
	\CC_N(u)$. 

	Suppose that $x\notin \CC_N(u)$ and thus, $C' = \CC_N(u)$. 
	Assume, for contradiction, that there
	is some $C''\in \mathfrak{C}'$ such that $C''\subsetneq C'$ and $a,b\in C''$. Since $C' =
	\CC_N(u)$ is the unique inclusion-minimal cluster in $\mathfrak{C}$ that contains $a,b$, we have
	$C''\notin \mathfrak{C}$. However, since $C''\in \mathfrak{C}'$, it must hold that $C'' \cup
	\{x\} = C$ for some $C\in \mathfrak{C}$. Let $z$ be some vertex with $\CC_N(z)=C$. Since $x\notin
	C'$ and $x\in C$ it follows that $u\neq z$. This together with $u =\lca_N(a,b)$ and $a,b\in C''$
	and Obs.\ \ref{obs:deflcaY}(i) implies that $u\prec_N z$. Hence, $C' \subsetneq C = C'' \cup
	\{x\}$ (cf.\ \cite[L.\ 17]{Hellmuth2023}). 
	Since $x\notin C'$, it holds that $C' \subseteq C''$ 
	a contradiction. Hence, $C' = \CC_N(u)$ is an inclusion-minimal cluster in
	$\mathfrak{C}'$ that contains $a,b$. 
	
	Suppose that $x\in \CC_N(u)$ and thus, $C' \cup \{x\} = \CC_N(u)$. Assume again, for
	contradiction, that there is some $C''\in \mathfrak{C}'$ such that $C''\subsetneq C'$ and $a,b\in
	C''$. Thus, $C''\subsetneq \CC_N(u)$. Since $\CC_N(u)$ is the unique inclusion-minimal cluster in
	$\mathfrak{C}$ that contains $a,b$, we have $C''\notin \mathfrak{C}$. However, since $C''\in
	\mathfrak{C}'$, it must hold that $C'' \cup \{x\} = C$ for some $C\in \mathfrak{C}$. Since
	$x\notin C'$, $x\notin C''$ and $C''\subsetneq C'$, we have $C = C'' \cup \{x\}\subsetneq C'\cup
	\{x\} = \CC_N(u)$; a contradiction to $\CC_N(u)$ being the unique inclusion-minimal cluster in
	$\mathfrak{C}$ that contains $a$ and $b$. Hence, $C'$ is an inclusion-minimal cluster in
	$\mathfrak{C}'$ that contains $a,b$.

	Finally, Obs.\ \ref{obs:deflcaY}(ii) implies that $\CC_G(\lca_G(a,b)) = \CC_G(v)$ is the unique
	inclusion-minimal cluster in $\mathfrak{C}'$ that contains $a,b$. Hence, $C' = \CC_N(u)\setminus
	\{x\} = \CC_G(v)$. 
	
	We are now in the position to show that $\sigma(ab) = t(\lca_N(a,b)) = t'(\lca_G(a,b))$ and, thus
	to prove that $(G,t')$ explains $\SM[W]$. By Obs.\ \ref{obs:deflcaY}(iii), $u = \lca_N(a,b) =
	\lca_N(\CC_N(\lca_N(a,b))) = \lca_N(\CC_N(u))$. Assume that $x\notin \CC_N(u)$. In this case,
	$\CC_N(u) = \CC_G(v)$ and, therefore, $u = \lca_N(\CC_N(u)) = \lca_N(\CC_G(v)) = w_v$. Hence,
	$\sigma(ab) = t(\lca_N(a,b)) = t(w_v) = t'(v) = t'(\lca_G(a,b))$. Assume that $x\in \CC_N(u)$ in
	which case $\CC_G(v) \notin \mathfrak{C}$ since, otherwise $a,b\in \CC_G(v)$ and
	$\CC_G(v)\subsetneq \CC_N(u)$ would contradict the fact that $\CC_N(u)$ is the inclusion-minimal
	cluster in $ \mathfrak{C}$ containing $a,b$. 
	Thus, $\CC_N(u)$ is an inclusion-minimal cluster
	containing $\CC_G(v)$. Since $N$ has the $\klca$-property, Obs.\ \ref{obs:2lca->pre-bin} implies that
	$\mathfrak{C}$ is pre-$k$-ary and thus, $\CC_N(u)$ is the unique inclusion-minimal cluster in
	$\mathfrak{C}$ containing $\CC_G(v) = C' = \CC_N(u)\setminus \{x\}$. 
	Hence, $u = \lca_N(\CC_N(u)) = \lca_N(\CC_N(u)\setminus \{x\}) = \lca_N(\CC_G(v)) = w_v$. Thus,
	$\sigma(ab) = t(\lca_N(a,b)) = t(w_v) = t'(v) = t'(\lca_G(a,b))$. 
	In summary, for the clustering system $\mathfrak{C}'$ on $W$,   
	$G\doteq \Hasse(\mathfrak{C}')$ is an lca-network that can be equipped with a labeling $t'$ such that $(G,t')$ explains $S[W]$.
\end{proof}


\section{Strudigrams and their Modular Decomposition}
\label{sec:moddec}

Modular decomposition and the notion of modules have a long history and goes back
to the seminal work by Gallai \cite{gallai-67} in 1967 for the purpose of
constructing transitive orientations of so-called comparability graphs. Since then this
concept has been rediscovered by various researchers under different names for undirected graphs \cite{Moehring:84, Moh:85,HM-79,Blass:78,HP:10}. 
Generalizations to directed graphs \cite{MCCONNELL05}, 
hypergraphs \cite{bonizzoni1995modular,BONIZZONI199965,HABIB202256},
$k$-structures \cite{EHRENFEUCHT1994209}, sets of (not necessarily disjoint)
binary relations \cite{HSW:17}, Boolean functions
\cite{BIOCH20051}, homogeneous relations \cite{BHLM:09}, $k$-ary relations or
matroids \cite{Moh:85,Moehring:84} have been established. 
We will review here some of these concepts needed for the modular decomposition of strudigrams.

Let $\SM= (X,\Upsilon,\sigma)$ be a strudigram and $W\subseteq X$. Then, $\sigma_{|W}\colon \binom{W}{2} \to
\Upsilon$ denotes the restriction of $\sigma$ to the set $\binom{W}{2}$, i.e., $\sigma_{|W}(xy) = \sigma(xy)$ for all distinct
$x,y\in W$. Moreover, $\SM[W] = (W,\Upsilon,\sigma_{|W})$ is the strudigram \emph{induced by $W$} and
we put $\SM-v\coloneqq \SM[X\setminus \{v\}]$. Given two vertex-disjoint strudigrams $\SM =
(X,\Upsilon,\sigma)$ and $\SM' = (X',\Upsilon', \sigma')$ we can construct a new strudigram
$\SM\join_k \SM' = (X\cup X', \Upsilon\cup \Upsilon'\cup\{k\},\widetilde \sigma)$, called the
\emph{$k$-join} of $\SM$ and $\SM'$ by putting
\[\widetilde\sigma(xy)\coloneqq 
    \begin{cases}
		 \sigma(xy)&\text{if $x,y\in X$}\\
		 \sigma'(xy)&\text{if $x,y\in X'$}\\
	    k&\text{otherwise.}
\end{cases}
\]
Note that $\join_k$ is associative and commutative on vertex-disjoint strudigrams \cite{Engelfriet1996}.
Moreover, for $\join_k$, we may have $k\in \Upsilon\cup \Upsilon'$.
If a strudigram $\SM$ can be written as the $k$-join $\SM = \SM'\join_k \SM''$ we call
$\SM$ a \emph{$k$-series strudigram}. 

Two strudigrams $\SM= (X,\Upsilon,\sigma)$ and $\SM'= (X',\Upsilon',\sigma')$ are \emph{color-restrictive (cr)-isomorphic},
in symbols $\SM\simeq \SM'$, if there is a bijective map $\varphi\colon X\to X'$ such that 
$\sigma(xy)=\sigma'(\varphi(x)\varphi(y))$ for all distinct $x,y\in X$.

Let $\SM = (X,\Upsilon,\sigma)$ be a strudigram. A \emph{module} $M$ of $\SM$ is a non-empty subset
$M\subseteq X$ such that for all $x, y \in M$ and all $z\in X\setminus M$ it holds that $\sigma(xz)
= \sigma(yz)$. One easily verifies that $X$ and all $\{x\}$ with $x\in X$ are modules. These modules are
called \emph{trivial}, and all other modules \emph{non-trivial}. We denote with $\MD(\SM)$ the set
of all modules of $\SM$.

\begin{figure}
	\centering
	\includegraphics[width=0.8\textwidth]{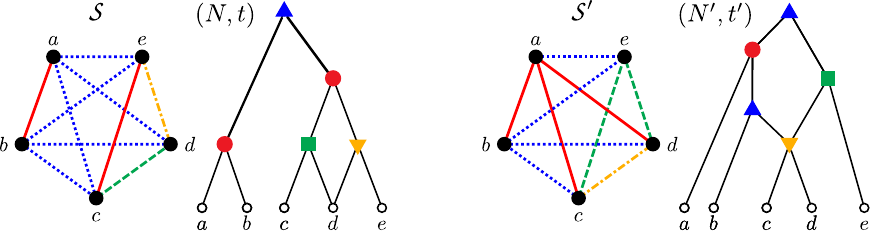}
	\caption{The edge-colored graph representation of a strudigram $\SM$ explained by $(N,t)$
	             and $\SM'$ explained by $(N',t')$, see Fig.~\ref{fig:exmpl-sketch1} for the color legend. $\SM$ is a $k$-series strudigram as
	             $\SM=\SM[\{a,b\}]\join_k\SM[\{c,d,e\}]$ where $k$ refers to the
	             edge-color \textit{dotted-blue} in the drawing. In this example, both $\SM[\{a,b\}]$ and $\SM[\{c,d,e\}]$
	             are $k$-prime. However, $\SM[\{a,b\}]$ is not prime as 
	             $\SM[\{a,b\}]=\SM[\{a\}]\join_{k'}\SM[\{b\}]$ where $k'$ refers to the edge-color \textit{solid-red}. In contrast, $\SM[\{c,d,e\}]$ is
	             prime and, even primitive. Moreover, $\SM'$ is prime. Neither $\SM$ nor $\SM'$ is
	             primitive, since $\{a,b\}$ (resp.\ $\{c,d\}$) is a strong module of $\SM$ (resp.\
	             $\SM'$). 
				} 
	\label{fig:kseries-prime}
\end{figure}

\begin{definition}\label{def:k-series}
A module $M$ of $\SM$ is \emph{$k$-series} if $S[M]$ is $k$-series, that is, 
if there is a $k\in \Upsilon$ and a partition $\{M_1,
M_2\}$ of $M$ such that $\SM[M]=\SM[M_1]\join_k \SM[M_2]$. 
\end{definition}
There are modules that are not $k$-series. A simple example is provided by 
the rainbow-triangle $\SM = (X,\Upsilon,\sigma)$ with $|X| = 3$ and $\sigma(xy)\neq \sigma(ab)$ for all distinct $\{x,y\},\{a,b\}\in \binom{X}{2}$
with $\{x,y\}\neq \{a,b\}$. In this case, for any partition $\{M_1, M_2\}$ of $X$ we have 
$|M_i|>1$ for some $i\in \{1,2\}$ and thus $\sigma(xy)\neq \sigma(xy')$ for
$x\in M_j$ and $y,y'\in M_i$ with $j\in \{1,2\}\setminus\{i\}$.
Thus, $\SM[X]\neq \SM[M_1]\join_k \SM[M_2]$ for all partitions $\{M_1, M_2\}$ of $X$
and therefore, the module $X$ is not $k$-series for any $k\in \Upsilon$. 
Further examples are provided in Fig.\ \ref{fig:kseries-prime}.

\begin{definition}
Modules that are not $k$-series for a given $k$ are called \emph{$k$-prime}.
A module $M$ is \emph{prime} (termed \emph{irreducible} in \cite{Engelfriet1996})
if it is $k$-prime for all $k$. We call a strudigram $\SM$  \emph{$k$-prime}, resp., \emph{prime} if $V(\SM)$ is $k$-prime, resp., prime.
\end{definition}

A module $M$ is \emph{strong} if $M$ does not overlap with any other module and we denote with
$\MDstrong(\SM)$ the set of all strong modules of a strudigram $\SM$, often referred to as its \emph{modular decomposition}. 
Since the trivial modules of $\SM$ are
strong, we have $\MDstrong(\SM)\neq \emptyset$. Moreover, $\MDstrong(\SM)$ is uniquely determined and has size
$O(|V(\SM)|)$ \cite{EHREN1994}, although there may be exponentially many modules. For example, in
the strudigram $\SM = (X,\{k\},\sigma)$ every $M\subseteq X$ is a module and, thus, $\SM$ has
$2^{|X|}$ modules, however, the $|X|+1$ strong modules are $X$ and the singletons $\{x\}$, $x\in X$.
Note that if $M$ is a module of $\SM$ that is not strong, i.e. $M$ overlaps with some module $M'$,
then by \cite[Thm.~4.12]{Ehren1990A} both $M\setminus M'$ and $M\cap M'$ are modules of $\SM$. Since
$M\setminus M'$ and $M\cap M'$ are disjoint modules there is, by \cite[Lem.4.11]{Ehren1990A}, some
$k$ such that $\SM[M]=\SM[M\setminus M']\join_k\SM[M\cap M']$, ensuring that $M$ is not a prime
module. We summarize the latter discussion into
\begin{observation}
Every prime module of a strudigram is a strong module.	
\end{observation}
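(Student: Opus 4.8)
The statement to prove is the final observation: \emph{Every prime module of a strudigram is a strong module.}

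The plan is to prove the contrapositive: if a module $M$ of a strudigram $\SM$ is \emph{not} strong, then $M$ is not prime, i.e., there exists some $k$ for which $M$ is $k$-series. Almost all of the ingredients have already been assembled in the paragraph immediately preceding the statement, so the proof is essentially a matter of stitching together three cited facts and invoking Definition~\ref{def:k-series}.

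First I would unfold the definition of a non-strong module: $M$ not strong means $M$ overlaps some module $M'$ of $\SM$, i.e., $M\cap M'\notin\{\emptyset, M, M'\}$. Next I would invoke \cite[Thm.~4.12]{Ehren1990A} to conclude that both $M\setminus M'$ and $M\cap M'$ are again modules of $\SM$; here I should note that $\{M\setminus M', M\cap M'\}$ is a partition of $M$ into two non-empty parts, which is exactly what is needed to feed into the join construction. Then I would apply \cite[Lem.~4.11]{Ehren1990A}, which says that whenever a strudigram's vertex set splits into two disjoint modules, the whole strudigram is a $k$-join of the two induced substrudigrams for some symbol $k$; applied to $\SM[M]$ with the partition $\{M\setminus M', M\cap M'\}$, this gives $\SM[M] = \SM[M\setminus M'] \join_k \SM[M\cap M']$ for some $k$. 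By Definition~\ref{def:k-series}, this exactly says $M$ is a $k$-series module, hence not $k$-prime, hence not prime. Taking the contrapositive yields the claim.

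The only point requiring a little care — and the closest thing to an obstacle — is making sure the cited results from \cite{Ehren1990A} apply verbatim in the present terminology: \cite{Ehren1990A} works with (symmetric) 2-structures, while we work with strudigrams, but by the translation discussed in Section~\ref{sec:strudi-N} a strudigram corresponds to a symmetric 2-structure and its modules correspond to clans, so the transfer is immediate and needs at most a one-line remark. One should also double-check the degenerate cases, but they do not arise: since $M$ overlaps $M'$, neither $M\setminus M'$ nor $M\cap M'$ is empty, so the partition into two parts is genuine and $\join_k$ is well-defined. In fact, since the preceding paragraph already spells out this exact argument, the ``proof'' of the observation can legitimately be a single sentence pointing back to that discussion; I would phrase it as: the statement is an immediate consequence of \cite[Thm.~4.12 \& Lem.~4.11]{Ehren1990A} together with Definition~\ref{def:k-series}, exactly as argued in the paragraph preceding the observation.
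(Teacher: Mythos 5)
Your proposal is correct and is essentially identical to the paper's own argument, which is contained in the paragraph immediately preceding the observation: the paper likewise takes a non-strong module $M$ overlapping some $M'$, invokes \cite[Thm.~4.12]{Ehren1990A} to get that $M\setminus M'$ and $M\cap M'$ are modules, and then \cite[Lem.~4.11]{Ehren1990A} to write $\SM[M]=\SM[M\setminus M']\join_k\SM[M\cap M']$, so $M$ is $k$-series and hence not prime. Your added care about the non-emptiness of the two parts and the 2-structure/strudigram translation is fine but does not change the route.
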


Note that, for a $k$-series module $M$, the partition $\{M_1, M_2\}$ is not necessarily unique,
i.e., $\SM[M]=\SM[M_1]\join_k \SM[M_2] = \SM[M'_1]\join_k \SM[M'_2]$ for different partitions
$\{M_1, M_2\}$ and $\{M'_1, M'_2\}$ of $M$ may be possible. The simplest example in this context is
again the strudigram $\SM = (X,\{k\},\sigma)$ with $\sigma(xy)=k$ for all $\{x,y\}\in \binom{X}{2}$ for
which $\SM[X]=\SM[M_1]\join_k \SM[M_2]$ for any partition of $\{M_1, M_2\}$ of $X$. Although
partitions of $k$-series strudigrams are not necessarily uniquely determined, there is, for each
non-prime (sub)strudigram, a unique partition into $k$-prime subsets for some unique $k$.

\begin{theorem}[{\cite[Lem.~2.15, Thm.\ 2.16]{Engelfriet1996}}]\label{thm:kseries=k'series}
Let $\SM= (X,\Upsilon,\sigma)$  be a strudigram and $W$ be an arbitrary non-empty subset of $X$.
Then, $\SM[W]$ is either prime or there is a unique $k\in \Upsilon$
and a unique partition $\{W_1,\dots,W_\ell\}$ of $W$ 
such that $\ell\geq 2$, each $\SM[W_i]$ is $k$-prime and $\SM[W] = \SM[W_1]\join_k \cdots\join_k \SM[W_k]$. 
In this case, $\Mmax(\SM[W]) = \{W_1,\dots,W_\ell\}$.
\end{theorem}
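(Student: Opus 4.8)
The plan is to reduce to the case $W = V(\SM)$, since the general statement follows by applying the result to the strudigram $\SM[W]$, and then to analyse the top level of the modular decomposition of $\SM$ via an auxiliary graph. If $\SM$ is prime there is nothing to show, so assume $\SM$ is $k$-series for at least one symbol $k$. For a symbol $k\in\Upsilon$, let $G_{\neq k}$ be the undirected graph on $V(\SM)$ whose edges are the pairs $\{x,y\}$ with $\sigma(xy)\neq k$, and let $\{W_1,\dots,W_\ell\}$ be the set of its connected components; this will be the asserted partition.

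For existence: if $\SM=\SM[M_1]\join_k\SM[M_2]$ then no edge of $G_{\neq k}$ joins $M_1$ to $M_2$, so each component lies in $M_1$ or in $M_2$, whence $\ell\geq 2$; for $i\neq j$ and $x\in W_i$, $y\in W_j$ the pair $\{x,y\}$ is a non-edge of $G_{\neq k}$, so $\sigma(xy)=k$, and thus $\SM=\SM[W_1]\join_k\cdots\join_k\SM[W_\ell]$ (this is well-defined since $\join_k$ is associative and commutative); and if some $\SM[W_i]$ were $k$-series, then $G_{\neq k}[W_i]$ would be disconnected, contradicting that $W_i$ is a connected component. For uniqueness of $k$: suppose $\SM$ is $k$-series with a partition $\{M_1,M_2\}$ all of whose inter-block pairs are colored $k$, and also $k'$-series with such a partition $\{N_1,N_2\}$ (inter-block pairs colored $k'$), where $k\neq k'$. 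Any pair colored $k$ cannot cross $\{N_1,N_2\}$; so, fixing $y_0\in M_2$, every $x\in M_1$ lies in the $N$-block of $y_0$, say $N_1$, whence $M_1\cup\{y_0\}\subseteq N_1$; then fixing $x_0\in M_1\subseteq N_1$, every $y\in M_2$ lies in the block of $x_0$, so $M_2\subseteq N_1$ and $V(\SM)=M_1\cup M_2\subseteq N_1$, contradicting $N_2\neq\emptyset$. For uniqueness of the partition: in any partition of $V(\SM)$ whose inter-part pairs are all colored $k$, each part is a union of connected components of $G_{\neq k}$; a part that is the union of two or more such components is $k$-series (all pairs between distinct components are colored $k$), so if every part is $k$-prime then each part is a single component, and the partition coincides with $\{W_1,\dots,W_\ell\}$.

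It remains to identify $\Mmax(\SM)$ — the set of maximal strong modules properly contained in $V(\SM)$, i.e.\ the children of the root of the MDT — with $\{W_1,\dots,W_\ell\}$. Each $W_i$ is a module since all pairs leaving $W_i$ are colored $k$, and it is proper because $\ell\geq 2$. The key point is that each $W_i$ is strong: if a module $N$ overlapped $W_i$, pick $x\in N\cap W_i$, $y\in W_i\setminus N$, $z\in N\setminus W_i$; the module property forces $\sigma(xy)=\sigma(zy)=k$, and since this holds for all such $x,y$, the partition $\{N\cap W_i,\,W_i\setminus N\}$ would exhibit $\SM[W_i]$ as $k$-series, contradicting $k$-primality. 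Maximality and the converse inclusion follow from a standard overlap argument: any strong module $N$ with $\emptyset\neq N\subsetneq V(\SM)$ cannot overlap any $W_j$ it meets, hence contains it, so $N$ is either contained in some $W_i$ or is the union of at least two (but not all) of the parts; in the latter case replacing one part of $N$ by a part outside $N$ yields a module overlapping $N$, contradicting strongness, so $N\subseteq W_i$ for some $i$, and if $N$ is a \emph{maximal} strong proper module this forces $N=W_i$. I expect this last block — carefully tracking which unions of components are modules, which are strong, and separating the cases $\ell=2$ and $\ell\geq 3$ in the overlap arguments — to be the main technical work, whereas the existence and the two uniqueness statements become short once $G_{\neq k}$ is in play.
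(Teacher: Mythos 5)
Your proof is correct, but note that the paper does not actually prove Theorem~\ref{thm:kseries=k'series}: it is imported wholesale from \cite[Lem.~2.15, Thm.~2.16]{Engelfriet1996} (stated there for symmetric labeled 2-structures), so any proof you give is necessarily a ``different route'' from the paper's. What you have reconstructed is essentially the standard argument from the 2-structures literature: the auxiliary graph $G_{\neq k}$ on $V(\SM)$ whose edges are the non-$k$-colored pairs, whose connected components are exactly the factors of the $k$-join decomposition. All the key steps check out: components cannot straddle a $k$-join partition, so $\ell\geq 2$ and cross-component pairs are colored $k$; connectivity of each component in $G_{\neq k}$ is precisely $k$-primality of $\SM[W_i]$; the uniqueness of $k$ via the two-sided absorption argument ($M_1\cup M_2\subseteq N_1$) is sound because partition blocks are nonempty by the paper's convention; and the identification $\Mmax(\SM)=\{W_1,\dots,W_\ell\}$ follows since each $W_i$ is a strong module (an overlapping module would induce a $k$-join of $\SM[W_i]$), while any union of at least two but not all components, though a module, is overlapped by another such union and hence not strong. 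The only blemish is the phrase ``cannot overlap any $W_j$ it meets, hence contains it'', which should read ``hence contains it or is contained in it''; your subsequent case split already accounts for both alternatives, so this is a slip of wording rather than a gap. The benefit of your write-up is that it makes the paper self-contained on this point and makes visible why the decomposition is unique (it is forced by the component structure of $G_{\neq k}$), at the cost of a page of routine overlap bookkeeping that the citation avoids.
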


Note that, for every strudigram $\SM$ on $X$, the set $X$ and all singletons $\{x\}$ with $x\in X$
are contained in $\MDstrong(\SM)$. This and the fact that 
 $\MDstrong(\SM)$ does not contain pairwise overlapping modules, implies that $\MDstrong(\SM)$ is a
clustering system on $\{\{x\} \mid x\in X\}$ that is a hierarchy. 
Hence, $\MDstrong(\SM)$ is uniquely represented  by the tree $\MDT_{\SM}\coloneqq \Hasse(\MDstrong(\SM))$. The root of $\MDT_{\SM}$
is, by definition, the vertex $X$. We denote with $\MDstrong^0(\SM)$ the set all strong modules that are
distinct from the singletons. In addition, $\MDT_\SM$
is equipped with the following \emph{labeling}
$\tau_\SM:\MDstrong^0(\SM)\to\Upsilon\cup\{\primeL\}$, defined by
\[\tau_\SM(M)=\begin{cases}
    k & \text{if $M$ is $k$-series}\\
    \primeL&\text{if $|M|>1$ and $M$ is prime}
\end{cases}
\]
We write $(\MDT_\SM, \tau_\SM)$ for a modular decomposition tree equipped with such a labeling $\tau_\SM$. 
By Theorem \ref{thm:kseries=k'series}, each module $M$ is either $k$-series for some unique $k\in \Upsilon$ or prime. 
Hence, the labeling $\tau_\SM$ is uniquely determined. 
The pair $(\MDT_\SM, \tau_\SM)$ at least partially explains the underlying strudigram $\SM$, 
in the sense that if $\tau_\SM(\lca_{\MDT_\SM}(\{x\},\{y\}))=k\neq\primeL$ for two distinct $x,y\in X$ then we have $\sigma(xy)=k$. 
If, on the other hand, $\tau_\SM(\lca_{\MDT_\SM}(\{x\},\{y\}))=\primeL$, then $\MDT_\SM$ does not reveal information about $\sigma(xy)$. 

\begin{definition}[\cite{ehrenfeucht1999theory}]
Strudigrams $\SM$ that consist of trivial modules only are called \emph{primitive}. 
Moreover, $\SM$ is \emph{truly-primitive} if it is primitive and it has at least three vertices. 
\end{definition}

Clearly, if $|V(\SM)|=1$, then $\SM$ is primitive. The term truly-primitive is necessary to
distinguish between primitive strudigrams that are at the same time $k$-series (which is precisely the case when $|V(\SM)|=2$)
and those that are not.

\begin{theorem}\label{thm:unp<=>treeExpl}
    For every strudigram $\SM=(X,\Upsilon,\sigma)$, the following statements are equivalent.
    \begin{enumerate}[label=\textup{(\arabic*)}]
        \item $\SM$ is explained by a labeled tree. 
        \item $\MDstrong(\SM)$ contains no prime modules $M$ with $|M|>1$. \label{thm:item-M1}
        \item $(\MDT_\SM,\tau_\SM)$ contains no \emph{\textsc{Prime}}-labeled inner vertices.
        \item There is no subset $W\subseteq X$ with $3\leq |W|$ for which $\SM[W]$ is primitive, i.e., 	there is no truly-primitive substructure.
        \item There is no subset $W\subseteq X$ with $3\leq |W|\leq 4$ for which $\SM[W]$ is primitive. 
        \item $\SM$ is $P_4$-free and rainbow-triangle-free.
    \end{enumerate}
\end{theorem}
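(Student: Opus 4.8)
The plan is to prove the equivalences via the cycle $(1)\Rightarrow(6)\Rightarrow(5)\Rightarrow(4)\Rightarrow(2)\Leftrightarrow(3)\Rightarrow(1)$, using the structural results already available. The implication $(1)\Rightarrow(6)$ is immediate from Theorem~\ref{thm:char-treeEx}, which states exactly that a strudigram is explained by a labeled tree if and only if it is $P_4$-free and rainbow-triangle-free; and $(6)\Rightarrow(1)$ closes the whole cycle the same way. So the real content lies in relating the ``no primitive substructure'' conditions (4) and (5), the ``no prime strong module'' condition (2), and the ``no \primeL-label'' condition (3). The equivalence $(2)\Leftrightarrow(3)$ is essentially a restatement: by the definition of $\tau_\SM$, an inner vertex $M\in\MDstrong^0(\SM)$ carries the label $\primeL$ precisely when $|M|>1$ and $M$ is prime, so $(\MDT_\SM,\tau_\SM)$ has a $\primeL$-labeled inner vertex iff $\MDstrong(\SM)$ contains a prime module of size $>1$.

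Next I would handle $(4)\Leftrightarrow(5)$ and the link to (2). The direction $(4)\Rightarrow(5)$ is trivial. For $(5)\Rightarrow(4)$ and, more usefully, for $(2)\Rightarrow(5)$ (equivalently the contrapositive $\neg(5)\Rightarrow\neg(2)$), the key fact is: if $\SM[W]$ is primitive with $|W|\ge3$, then $W$ is a prime strong module, and in particular $\SM$ has a prime strong module of size $>1$ somewhere in its modular decomposition. To make this precise one shows that whenever $\SM$ contains a truly-primitive induced substructure $\SM[W]$, there is a strong module $M$ of $\SM$ with $|M|>1$ that is prime: indeed, consider the smallest strong module $M$ containing two fixed elements that ``witness'' primitivity of $W$; using Theorem~\ref{thm:kseries=k'series} (each non-prime strudigram has a unique $k$-join decomposition into $k$-prime maximal modules) one argues that if every strong module of size $>1$ were $k$-series, then by descending through the MDT one could never ``create'' a primitive restriction — every induced $\SM[W]$ on $\geq3$ vertices would inherit a non-trivial module from the decomposition, contradicting primitivity of $\SM[W]$. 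I expect the cleanest route is to prove $\neg(2)\Leftrightarrow\neg(4)$ directly: if $\MDstrong(\SM)$ has a prime module $M$ with $|M|>1$, then among all such, pick an inclusion-minimal one; its children in $\MDT_\SM$ are strong modules of $\SM[M]$, and $\SM[M]$ with those children contracted is a primitive strudigram $\SM[W]$ on $|W|\ge 3$ vertices (one representative per child) — here one uses that the ``quotient'' of a strudigram by its maximal strong submodules preserves the module structure, a standard fact from modular-decomposition theory (referenceable via \cite{Ehren1990A,Engelfriet1996}). Conversely a truly-primitive $\SM[W]$ forces, via Theorem~\ref{thm:kseries=k'series}, the existence of a prime strong module, giving $\neg(4)\Rightarrow\neg(2)$.

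The remaining links are $(5)\Rightarrow(6)$ (or directly $(2)\Rightarrow(6)$, whichever is smoother) and $(3)\Rightarrow(1)$. For $(3)\Rightarrow(1)$: if $(\MDT_\SM,\tau_\SM)$ has no \primeL-labeled inner vertices, then every inner vertex is labeled by some $k\in\Upsilon$, and one checks directly that $(\MDT_\SM,\tau_\SM)$ itself explains $\SM$, i.e.\ $\tau_\SM(\lca(\{x\},\{y\}))=\sigma(xy)$ for all distinct $x,y$ — this follows because the least common ancestor $M$ of $\{x\},\{y\}$ in $\MDT_\SM$ is then $k$-series with $x,y$ in different parts of the $k$-join decomposition of $\SM[M]$ (they lie in different maximal strong submodules, by minimality of $M$ as a common ancestor), so $\sigma(xy)=k=\tau_\SM(M)$. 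For the $P_4$/rainbow-triangle direction I would argue the contrapositive: an induced $P_4$ on $\{a,b,c,d\}$ and a rainbow triangle are each easily checked to be truly-primitive strudigrams on $3$ or $4$ vertices, so $\neg(6)\Rightarrow\neg(5)$.

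The main obstacle I anticipate is the ``quotient'' step in $\neg(2)\Rightarrow\neg(4)$: showing carefully that contracting an inclusion-minimal prime strong module's maximal submodules yields a genuinely \emph{primitive} induced substructure (not merely one with no non-trivial strong modules), and controlling its size to be $\ge3$ — the size bound uses $|M|>1$ together with the fact that a prime module cannot be $k$-series, hence cannot have exactly two children forming a $2$-element quotient. One must also take care that the representatives chosen from each child really induce the quotient structure, which relies on the module property $\sigma(xz)=\sigma(yz)$ being constant across a module. Everything else is bookkeeping with Theorem~\ref{thm:kseries=k'series}, Theorem~\ref{thm:char-treeEx}, and the definition of $\tau_\SM$.
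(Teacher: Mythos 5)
Your list of implications never closes the equivalence: everything you actually establish points \emph{out of} the cluster $\{(2),(3),(4)\}$ and \emph{into} $\{(1),(6)\}$, but nothing comes back. Concretely, you prove $(2)\Leftrightarrow(3)$, $(2)\Leftrightarrow(4)$, $(4)\Rightarrow(5)$, $(3)\Rightarrow(1)$, $(1)\Leftrightarrow(6)$, and $(5)\Rightarrow(6)$ (your argument that an induced $P_4$ or a rainbow triangle is a truly-primitive substructure on $3$ or $4$ vertices is the contrapositive $\neg(6)\Rightarrow\neg(5)$, i.e.\ exactly $(5)\Rightarrow(6)$ --- not the $(6)\Rightarrow(5)$ that your announced cycle requires). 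Saying that ``$(6)\Rightarrow(1)$ closes the whole cycle'' is a non sequitur, since $(1)$ and $(6)$ are already linked to each other by Theorem~\ref{thm:char-treeEx} and that link leads nowhere new. What is missing is any implication from tree-explainability back to the module conditions, e.g.\ $(1)\Rightarrow(2)$, or alternatively $(6)\Rightarrow(5)$ (which would require showing that \emph{every} primitive strudigram on $3$ or $4$ vertices contains an induced $P_4$ or rainbow triangle --- you only verify the converse). This is precisely the direction the paper proves explicitly: given a labeled tree $(T,t)$ explaining $\SM$ and a strong module $M$ with $|M|>1$, set $v=\lca_T(M)$, pick a child $u$ of $v$, and split $M$ into $M'=L(T(u))\cap M$ and $M''=M\setminus M'$; every cross pair has $\lca$ equal to $v$, hence the same label $k=t(v)$, so $\SM[M]=\SM[M']\join_k\SM[M'']$ and $M$ is $k$-series. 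Without this (or an equivalent substitute) your argument only shows $(2)\Leftrightarrow(3)\Leftrightarrow(4)\Rightarrow(5)\Rightarrow(6)\Leftrightarrow(1)$, which is consistent with $(1)$ being strictly weaker than $(2)$.

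A secondary point: the assertion ``if $\SM[W]$ is primitive with $|W|\geq 3$, then $W$ is a prime strong module'' is false ($W$ need not be a module of $\SM$ at all), though your later refinement via the inclusion-minimal strong module containing $W$ and Theorem~\ref{thm:kseries=k'series} does not rely on it. Apart from the gap above, your route is a legitimate alternative to the paper's: the paper obtains $(3)\Leftrightarrow(4)\Leftrightarrow(5)$ by citing \cite[Thm.~3.6]{Engelfriet1996} and $(1)\Leftrightarrow(6)$ from Theorem~\ref{thm:char-treeEx}, whereas you re-derive the quotient step (essentially Theorem~\ref{thm:Mprime=>QuotientMtruly-primitive} plus Observation~\ref{obs:quotient}) and spell out $(3)\Rightarrow(1)$, both of which are correct and self-contained; but the equivalence cannot be concluded until the $(1)\Rightarrow(2)$ direction is supplied.
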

\begin{proof}
	To see that (1) implies (2), let $(T,t)$ be a labeled tree and $\SM=(X,\Upsilon,\sigma)$ be the
	strudigram that is explained by $(T,t)$. Let $M\in \MDstrong(\SM)$ be an arbitrary strong module
	with $|M|>1$ and $v\coloneqq \lca_{T}(M)$ be the least common ancestor of the vertices in $M$.
	Since $|M|>1$, the vertex $v$ cannot be a leaf and has, therefore, at least two children. Let $u$ be
	a child of $v$ and put $M'\coloneqq L(T(u))\cap M$ and $M''\coloneqq M\setminus M'$. By
	construction, $\lca_{T}(x,y)=v$ for all $x\in M'$ and $y\in M''$. Since $(T,t)$ explains $\SM$
	and $\sigma(xy)=k$ for some $k\in \Upsilon$, we have $t(\lca_{T}(x,y))=k$ for all $x\in M'$ and
	$y\in M''$. Hence, $\SM[M] = \SM[M']\join_k\SM[M'']$. Consequently, $M$ is a $k$-series modules
	and, therefore, not prime. Clearly, (2)$\implies$(3)$\implies$(1) follows directly from the
	respective definitions. Thus, Conditions (1), (2) and (3) are equivalent. The equivalence between 
	(3), (4) and (5) is provided in \cite[Thm.~3.6]{Engelfriet1996} 
	where the statements were proven for the equivalent concept of
	labeled 2-structures. By Theorem\ \ref{thm:char-treeEx}, Conditions (1) and (6)
	are equivalent, which completes the proof. 
\end{proof}

The hierarchical structure
of $\MDstrong(\SM)$ implies that there is a unique partition $\Mmax(\SM) = \{M_1 , \dots , M_k\}$ of
$V(\SM)$ into the (inclusion-)maximal strong modules $M_j \neq V(\SM)$ of $\SM$
\cite{Moehring:84,Ehren1990A}. In particular, for any two disjoint modules $M,M'$ of $\SM$ there is
a $k\in \Upsilon(\SM)$ such that $\sigma(xy) = k$ for all $x\in M$ and $y\in M'$
\cite[L~4.11]{Ehren1990A}.  The latter two arguments allow us to define the \emph{quotient}
$\SM/\Mmax(\SM) = (\Mmax(\SM), \Upsilon, \sigma')$ where $\sigma'(MM')=\sigma(xy)$ for all $M,M'\in
\Mmax(\SM)$ and an arbitrary $x\in M$ and $y\in M'$.
Since $\Mmax(\SM)$ yields a partition of $V(\SM)$ and since 
$\sigma'(MM')=\sigma(xy)$ for all distinct $M,M'\in \Mmax(\SM)$ and all $x\in M$ and $y\in M'$
we obtain, 
\begin{observation}\label{obs:quotient} 
The quotient  $\SM/\Mmax(\SM)$ with $\Mmax(\SM) = \{M_1 , \dots , M_k\}$ is 
cr-isomorphic to the induced strudigram $\SM[W]$ for any 
$W\subseteq X$ that satisfies $|M_i \cap W | = 1$ for all $i \in \{1, \dots,k\}$.
\end{observation}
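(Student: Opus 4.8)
The plan is to exhibit the obvious bijection between the blocks of $\Mmax(\SM)$ and the elements of $W$ and to check that it preserves $\sigma$-labels. First I would observe that, since $\Mmax(\SM) = \{M_1,\dots,M_k\}$ is a partition of $X$ and $|M_i \cap W| = 1$ for every $i$, the set $W$ meets each block $M_i$ in exactly one vertex $w_i$; hence $W = \{w_1,\dots,w_k\}$ with $|W| = k = |\Mmax(\SM)|$, and for distinct $i,j$ the vertices $w_i$ and $w_j$ lie in distinct blocks of $\Mmax(\SM)$. Define $\varphi\colon \Mmax(\SM) \to W$ by $\varphi(M_i) \coloneqq w_i$; by the previous observation this is a bijection.

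Next I would verify that $\varphi$ witnesses $\SM/\Mmax(\SM) \simeq \SM[W]$, i.e.\ that $\sigma'(M_iM_j) = \sigma_{|W}(w_iw_j)$ for all distinct $i,j$. By the definition of the quotient recalled above, $\sigma'(M_iM_j) = \sigma(xy)$ for an arbitrary choice of $x\in M_i$ and $y\in M_j$, and this is well-defined precisely because $M_i$ and $M_j$ are disjoint modules of $\SM$, so that $\sigma(xy)$ does not depend on the chosen representatives by \cite[L.~4.11]{Ehren1990A}. Taking in particular $x = w_i\in M_i$ and $y = w_j\in M_j$ then yields $\sigma'(M_iM_j) = \sigma(w_iw_j) = \sigma_{|W}(w_iw_j)$, which is exactly what the definition of cr-isomorphism requires.

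There is no genuine obstacle here: the label map $\sigma'$ of the quotient was set up precisely so as to be independent of the chosen transversal, and the hypothesis on $W$ says exactly that $W$ is such a transversal of $\Mmax(\SM)$. The only point that warrants a moment's care is the purely combinatorial bookkeeping that converts the assumption ``$|M_i \cap W| = 1$ for all $i$'' together with the partition property of $\Mmax(\SM)$ into the three statements that $\varphi$ is well-defined, is bijective, and sends distinct blocks to vertices lying in distinct blocks; once this is in place, the label verification in the previous paragraph is immediate, and the proof concludes.
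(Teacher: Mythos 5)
Your proposal is correct and matches the paper's reasoning: the paper presents this as an immediate consequence of the discussion preceding the observation, namely that $\sigma'(MM')=\sigma(xy)$ is independent of the chosen representatives $x\in M$, $y\in M'$ (by \cite[L~4.11]{Ehren1990A}) and that $\Mmax(\SM)$ partitions $V(\SM)$, so a transversal $W$ induces the obvious cr-isomorphism. You have simply made the bijection $M_i\mapsto w_i$ and the label check explicit, which is exactly the intended argument.
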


While there is a clear distinction between $\SM$ being $k$-series or prime, there are
strudigrams that are neither $k$-series nor primitive. By way of example, consider $\SM =
(\{y_1,y_2,x_1,x_2\},\Upsilon,\sigma)$ with $\sigma(x_iy_j)=j$ for $i,j\in \{1,2\}$ and
$\sigma(y_1y_2)\neq 1,2$. One easily verifies that $\SM$ cannot be written as a $k$-join and that
$\SM$ contains the non-trivial modules $\{x_1,x_2\}$. In addition, there are strudigrams that are
$k$-series and primitive e.g.\ any strudigram on two vertices. For quotients, however, this
situation changes. 
\begin{theorem}[{\cite[Thm.\ 2.17]{Engelfriet1996}}]\label{thm:Mprime=>QuotientMtruly-primitive}
Let $M$ be a prime module of $\SM$ with $|M|>1$. Then, $|M|\neq 2$ and $\SM[M]/\Mmax(\SM[M])$ is truly-primitive. 
\end{theorem}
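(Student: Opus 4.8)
The plan is to treat the two assertions in turn, the second being the substantive one. That $|M|\neq 2$ is immediate: if $M=\{a,b\}$ then, with $k\coloneqq\sigma(ab)$, we have $\SM[M]=\SM[\{a\}]\join_k\SM[\{b\}]$, so $M$ is $k$-series, contradicting primeness; hence $|M|\ge 3$. Write $\mathcal M\coloneqq\Mmax(\SM[M])$ and $Q\coloneqq\SM[M]/\mathcal M$. Since $|M|\ge 3$ we have $|\mathcal M|\ge 2$, and if $|\mathcal M|=2$, say $\mathcal M=\{M_1,M_2\}$, then by \cite[L.~4.11]{Ehren1990A} there is a $k$ with $\sigma(xy)=k$ for all $x\in M_1$, $y\in M_2$, so $\SM[M]=\SM[M_1]\join_k\SM[M_2]$ contradicts primeness; thus $|V(Q)|=|\mathcal M|\ge 3$, and it remains to prove that $Q$ is primitive.

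I would first record two facts. (a) For $\emptyset\neq\mathcal N\subseteq\mathcal M$, the set $N\coloneqq\bigcup_{M_i\in\mathcal N}M_i$ is a module of $\SM[M]$ if and only if $\mathcal N$ is a module of $Q$; this is a routine check, using that each $M_i$ is a module of $\SM[M]$ and that, by \cite[L.~4.11]{Ehren1990A}, $\sigma(xz)$ for $x\in M_i$, $z\in M_j$ with $i\neq j$ equals the quotient value $\sigma'(M_iM_j)$. (b) $Q$ is not $k$-series for any $k$: if $Q=Q[\mathcal W_1]\join_k\cdots\join_k Q[\mathcal W_\ell]$ with $\ell\ge 2$, then putting $W_i\coloneqq\bigcup_{M_j\in\mathcal W_i}M_j$, the quotient construction gives $\sigma\equiv k$ between any two distinct $W_a,W_b$, so $\SM[M]=\SM[W_1]\join_k\cdots\join_k\SM[W_\ell]$ contradicts primeness of $M$.

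Now suppose for contradiction that $Q$ has a non-trivial module, and invoke the standard theory of the partitive family of modules of a strudigram \cite{Ehren1990A,EHREN1994}: overlapping modules are closed under union, intersection and set difference; every module of $\SM[M]$ is either contained in one block of $\mathcal M$ or is a union of blocks; and the union of an overlap-component of modules is always a \emph{strong} module, which is moreover $k$-series for a suitable $k$. Then every non-trivial module $\mathcal N$ of $Q$ overlaps some other module of $Q$: its pull-back $N=\bigcup_{M_i\in\mathcal N}M_i$ is a module of $\SM[M]$ with $M_i\subsetneq N\subsetneq M$ for $M_i\in\mathcal N$, hence not strong by maximality of $M_i$ in $\mathcal M$, so $N$ overlaps some module $N'$ of $\SM[M]$; $N'$ cannot lie inside a single block (a block meeting $N$ lies in $N$), so it is a union of blocks, and (a) turns it into a module of $Q$ overlapping $\mathcal N$. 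Fix such a non-trivial $\mathcal N$; the union $\mathcal U$ of the overlap-component it generates in $Q$ is then a strong, $k$-series module of $Q$. If $\mathcal U=V(Q)$, then $Q$ is $k$-series, contradicting (b); if $\mathcal U\subsetneq V(Q)$, then $\mathcal U$ is a non-trivial module of $Q$ that overlaps nothing, contradicting the previous sentence. Hence $Q$ has only trivial modules, i.e.\ $Q$ is primitive, and since $|V(Q)|\ge 3$ it is truly-primitive.

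The crux — and the step most likely to need care — is the last paragraph: converting ``$Q$ has no non-trivial strong module and is not $k$-series'' into genuine primitivity of $Q$ forces one to control the non-strong modules as well, which is precisely where the partitive-family closure properties and the ``block-contained or block-saturated'' dichotomy for modules are essential. If one prefers, the whole of the second assertion can instead be quoted verbatim from the modular-decomposition theory of $2$-structures.
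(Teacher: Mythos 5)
The paper does not actually prove this statement: it is imported wholesale as Theorem~2.17 of Engelfriet et al.\ \cite{Engelfriet1996}, so there is no in-paper argument to compare yours against. Your derivation is, as far as I can check, correct. The easy part is exactly right: $|M|=2$ would make $M$ a $\sigma(ab)$-series module, and $|\Mmax(\SM[M])|=2$ would exhibit $\SM[M]$ as a $k$-join via \cite[Lem.~4.11]{Ehren1990A}, so $|V(Q)|\geq 3$. Your reduction of primitivity of $Q$ to facts (a) and (b) is clean, and the block-level dichotomy (a module of $\SM[M]$ meeting a block either lies in it or contains it) is correct precisely because the blocks are strong and hence overlap no module. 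The one place you are genuinely leaning on the cited theory rather than proving it is the final paragraph: the claim that every non-strong module lies in an overlap class whose union is a strong, $k$-series module is the structure theorem for partitive families (equivalently, for clans of symmetric 2-structures), which is of essentially the same depth as the theorem being proved. So your route buys a self-contained proof \emph{modulo} that machinery, whereas the paper simply buys the entire statement from the literature; both are legitimate, but if you keep your version you should replace the appeal to ``standard theory'' with precise references (e.g.\ \cite[Thm.~4.12]{Ehren1990A} and the corresponding results in \cite{EHREN1994}), since otherwise the argument is circular in spirit with respect to what you are quoting.
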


\section{Prime-vertex Replacement}
\label{sec:pvr}

One of the main features of the modular decomposition tree $(\MDT,\tau)$ of a strudigram $\SM =
(X,\Upsilon,\sigma)$ is that, in the absence of prime modules, $(\MDT,\tau)$ explains $\SM$.
However, the information about $\sigma(x,y)$ is hidden if $\tau(\lca_{\MDT}(x,y)) = \primeL$. To
provide the missing structural information of $\SM$, we aim to locally replace the prime vertices in
$(\MDT,\tau)$ to obtain networks that capture full information about $\SM$. This idea has already
been fruitfully applied by replacing prime vertices with 
so-called $0/1$-labeled galls to obtain networks explaining restricted ``binary'' strudigrams
$(X,\{0,1\},\sigma)$ \cite{HS:22}, or with so-called labeled halfgrids to obtain networks that explain
arbitrary strudigrams \cite{BHS:21}. These constructions, involving the replacement of prime
vertices by other networks, have been studied for rather restricted types of networks only. Here, we
aim to generalize this concept to make the "prime-vertex replacement" applicable in a more general
setting.

There are several motivations to consider such networks. On the one hand, structural properties of
$(N,t)$ can provide deep structural insights into the strudigrams they explain. For example,
strudigrams $\SM = (X,\{0,1\},\sigma)$ that can be explained by so-called labeled galled trees (also
known as \gatex graphs) are closely related to weakly-chordal, perfect graphs with perfect order,
comparability, and permutation graphs \cite{HS:24}. These results are based on a characterization of
such \gatex graphs in terms of 25 forbidden induced subgraphs, a result established by leveraging
the structure of the network that explains $\SM$.

In addition, such networks $(N,t)$ can guide algorithms to solve many NP-hard problems in polynomial
time for restricted strudigrams. For example, in the absence of prime modules in $\SM =
(X,\{0,1\},\sigma)$, the graph representation $G$ of $\SM$ is a cograph. In this case, the modular
decomposition tree $(\MDT,\tau)$ has been shown to be a vital tool to prove that many NP-hard
problems such as finding optimal colorings, clique detection, Hamiltonicity, and cluster-deletion
can be solved in linear time on cographs \cite{BLS:99,CLS:81,Gao:13}. Similarly, if $(N,t)$ is a
so-called $0/1$-labeled galled tree, the resulting strudigram $\SM$ is equivalent to \gatex graphs, and it has
been shown in \cite{HS:24B,hellmuth2024solving} that NP-hard problems such as finding 
perfect orderings, maximum cliques or independent sets, and minimum colorings can be solved in
linear time. Again, $(N,t)$ serves as a guide for these algorithms to find the optimal solutions.

Modular decomposition has many important applications, e.g., in community detection in complex
networks \cite{jia2015defining}, in evolutionary biology and orthology detection
\cite{Hellmuth:13a,Hellmuth:15a,lafond2015orthology}, in structure recognition of
protein-interaction networks \cite{DER:09,gagneur2004modular}, pattern detection in diagnostic data
\cite{piceno2021co}, graph drawing \cite{papadopoulos2006drawing}, automata theory
\cite{biggar2021modular}, and many more. However, graphs or, more generally, strudigrams that arise
from real-world data, particularly in evolutionary biology, are often better represented by a
network rather than a tree to accommodate evolutionary events such as hybridization or horizontal
gene transfer. One way to address this problem is, again, the local replacement of prime
vertices in $(\MDT,\tau)$ by small labeled networks.  

We start here by providing the general concept of  prime-vertex replacement (pvr) networks.
Later, in Section \ref{sec:gatex}, we will focus on a particular type of DAGs used to replace prime vertices
and provide a characterization of strudigrams that can be explained by such types of networks.

In what follows, we say that two networks $N$ and $N'$ are internal vertex-disjoint, if $(V^0(N)\setminus \{\rho_N\}) \cap (V^0(N')\setminus \{\rho_{N'}\}) = \emptyset$.

\begin{definition}[\bf prime-explaining family]\label{def:pfam}
	Let $\SM = (X,\Upsilon,\sigma)$ be a strudigram and $\P\subseteq \MDstrong(\SM)$ be the set of all of its prime modules and 
	$(\MDT,\tau)$ be its modular decomposition tree. A \emph{prime-explaining family (of $\SM$)} is a set
	$\pfam(\SM)=\{(N_M,t_M) \mid M\in\P\}$ of pairwise internal vertex-disjoint  labeled networks such
	that, for each $M\in\P$, the network $(N_M,t_M)$ explains the strudigram $\SM[M]/\Mmax(\SM[M])$. 
\end{definition}

As we shall see, the modular decomposition tree $(\MDT,\tau)$ of each strudigram $\SM$ can be 
modified by locally replacing prime vertices $M\in  \P$ by $(N_M,t_M)\in \pfam(\SM)$ to obtain 
a labeled network that explains $\SM$. This idea is made precise in the following
\begin{definition}[\bf prime-vertex replacement (pvr) networks]
  \label{def:pvr}
	Let $\SM=(X,\Upsilon,\sigma)$ be a strudigram and $\P$ be the set of all prime vertices in
	its modular decomposition tree $(\MDT,\tau)$. 
	Let $\pfam(\SM)$ be a prime-explaining family of $\SM$.
	A \emph{prime-vertex replacement (pvr) network} $\pvr(\SM,\pfam(\SM))$ of $\SM$ is the directed, labeled graph $(N,t)$ obtained by the following procedure: \smallskip
\begin{enumerate}
\item For each $M\in \P$, remove all edges $(M,M')$ with
  		$M'\in \child_{\MDT}(M)$ from $\MDT$ to obtain the directed graph
  		$T'$.\label{step:T'}\smallskip
\item Construct a directed graph $N$ by adding, for each $M\in\P$,  $N_M$ to $T'$ by identifying the root
  		of $N_M$ with $M$ in $T'$ and each leaf $M'$ of $N_M$ with the
	  	corresponding child $M'\in \child_{\MDT}(M)$. \label{step:T''}\smallskip
\item \label{step:color} 
 Define the labeling $t\colon V^0(N)\to \Upsilon$ by putting, for
  all $w\in V^0(N)$,
  \begin{equation*}
    t(w) \coloneqq
    \begin{cases} 
		 t_{M}(w) &\mbox{if }  w \in V^0(N_{M}) \text{ for some } M\in \P\\
      \tau(w) &\mbox{otherwise, i.e., if } w\in V^0(\MDT)\setminus \P 
    \end{cases}
  \end{equation*}  
\item Replace every singleton module $\{x\}\in V(N)$ in $N$ by $x$.\label{step:flatten leaves}\smallskip
\end{enumerate}
The resulting labeled directed graph  $(N,t)$ is called \emph{pvr-network (of $\SM$)} and denoted by $\pvr(\SM,\pfam(\SM))$.
\end{definition}

Step \ref{step:flatten leaves} of Definition~\ref{def:pvr} is a necessary
technicality brought by the fact that the leaves of the modular decomposition tree consists of the
singletons $\{x\}$ for $x\in V(\SM)$, rather than the set $V(\SM)$ itself. 
We emphasize that the networks used in $\pfam(\SM)$ are not required to be leaf-separated.
See Figure \ref{fig:exmapl1-pvr} and \ref{fig:gatex_pvr_8} for illustrative examples.

\begin{lemma}\label{lem:pvr-well-defined}
	Let $\SM=(X,\Upsilon,\sigma)$ be a strudigram, $\P$ be the set of all prime vertices in
	its modular decomposition tree $(\MDT,\tau)$ and  
	$\pfam(\SM)$ be a prime-explaining family of $\SM$.
	Then,  $\pvr(\SM,\pfam(\SM))$ is a labeled network on $X$.
	
	In particular,  $V^0(\MDT)\cup X \subseteq V(N)$. Moreover, all $u,v\in V(\MDT)$ with $v\preceq_\MDT u$ satisfy $v\preceq_N u$, and 
	every vertex along the (unique) directed $uv$-path in $\MDT$ is contained in every directed $uv$-path in $N$. 	
\end{lemma}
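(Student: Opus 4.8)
The plan is to trace through the construction in Definition~\ref{def:pvr} step by step and verify each required property. First I would establish that $\pvr(\SM,\pfam(\SM))$ is a DAG with a unique root: the graph $T'$ from Step~\ref{step:T'} is a forest obtained from the tree $\MDT$ by deleting edges, so it is acyclic; in Step~\ref{step:T''} each $N_M$ is a DAG whose root is identified with $M\in V(\MDT)$ and whose leaves are identified with the children of $M$ in $\MDT$. Since the $N_M$ are pairwise internal vertex-disjoint and the prime modules $M\in\P$ form an antichain-free but hierarchically nested family in $\MDT$ (any two are nested or disjoint, being strong modules), one checks that inserting the $N_M$ cannot create a directed cycle: a cycle would have to pass through some $N_M$ and re-enter, but all edges into $V^0(N_M)\setminus\{M\}$ come from within $N_M$, and the only edge leaving toward the ``outside from below'' goes through a leaf $M'$ of $N_M$ which sits strictly below $M$ in the $\preceq$-order inherited from $\MDT$. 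The root of $N$ is the root $X$ of $\MDT$, which still has in-degree zero, so $N$ is a rooted network. That $L(N)=X$ follows because the leaves of $\MDT$ are exactly the singletons $\{x\}$, each such singleton either stays a leaf (if it is not a child of a prime vertex) or is identified with a leaf of some $N_M$ (which has out-degree zero in $N_M$, hence in $N$), and Step~\ref{step:flatten leaves} relabels $\{x\}$ by $x$; no new out-degree-zero vertex is introduced since inner vertices of $\MDT\setminus\P$ retain all their children and inner vertices of $N_M$ are not leaves of $N_M$.

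Next I would argue $V^0(\MDT)\cup X\subseteq V(N)$: Step~\ref{step:T'} removes only edges, not vertices, so $V(\MDT)\subseteq V(T')$; Step~\ref{step:T''} only adds vertices and identifies roots/leaves of $N_M$ with existing vertices of $T'$, so $V(\MDT)\subseteq V(N)$ up to the relabeling in Step~\ref{step:flatten leaves}, after which the singletons become the elements of $X$ and the inner vertices $V^0(\MDT)$ are untouched. The labeling $t\colon V^0(N)\to\Upsilon$ is well-defined because the cases in Step~\ref{step:color} are exhaustive and mutually exclusive: $V^0(N)$ partitions into $V^0(\MDT)\setminus\P$ and $\bigcup_{M\in\P}V^0(N_M)$, the latter union being disjoint by internal vertex-disjointness (the shared roots $M$ are labeled by $t_M$, and for distinct $M,M'\in\P$ one is not in the internal vertices of the other's network). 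One subtlety: a vertex $M\in\P$ is simultaneously a vertex of $\MDT$ and the root of $N_M$; the definition assigns it $t_M(M)$, which is fine since $M\in V^0(N_M)$.

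For the final claim about ancestor preservation, I would fix $u,v\in V(\MDT)$ with $v\preceq_\MDT u$ and consider the unique directed $uv$-path $u = w_0, w_1,\dots,w_\ell = v$ in $\MDT$. For each consecutive pair $(w_{i-1},w_i)$, the edge $(w_{i-1},w_i)$ of $\MDT$ either survives in $N$ (if $w_{i-1}\notin\P$) or, if $w_{i-1}=M\in\P$, then $w_i\in\child_{\MDT}(M)$ is a leaf of $N_M$, and by construction $N_M$ contains a directed path from its root $M$ to that leaf $w_i$ — here I need that $N_M$, being a rooted network, has a directed $\rho_{N_M}$-to-$w_i$ path for every vertex $w_i$, which is exactly the defining property of rooted networks recalled in Section~\ref{sec:prelim}. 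Concatenating these pieces yields a directed $uv$-path in $N$, so $v\preceq_N u$, and moreover every original vertex $w_i$ lies on this path. To get the stronger statement that every $w_i$ lies on \emph{every} directed $uv$-path in $N$, I would observe that the $w_i$ are precisely the ``cut vertices'' separating $u$ from $v$ in $N$: any directed path from $u$ to $v$ must descend through the nested structure, and to get from the region above $w_i$ to the region at or below $w_i$ it must pass through $w_i$ itself, because $w_i\in V(\MDT)$ and the only vertices of $N$ that are incomparable-free ``bottlenecks'' between levels of the modular decomposition are exactly these original $\MDT$-vertices (each $N_M$ is inserted strictly between $M$ and its children, so to exit the subgraph glued below $M$ one must either be at $M$ or at a child of $M$). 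I expect this last point — rigorously showing the $\MDT$-vertices along the path are mandatory waypoints in $N$ — to be the main obstacle, and I would handle it by induction on $\ell$ together with the observation that $\CC_N(w_i)$ remains a cluster that separates the descendants of $v$ from the non-descendants of $w_i$, invoking that the $N_M$ are glued in without creating any edge that ``skips over'' an $\MDT$-vertex.
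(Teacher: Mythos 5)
Your proposal is correct and follows essentially the same route as the paper's proof: both walk through Definition~\ref{def:pvr} step by step, check that the root/leaf identifications are well-defined (via $L(N_M)=\Mmax(\SM[M])=\child_\MDT(M)$), argue acyclicity and uniqueness of the root from the nested structure, verify the labeling cases are exhaustive, and obtain $v\preceq_N u$ by concatenating the surviving $\MDT$-edges with root-to-leaf paths inside the inserted $N_M$. If anything you are more explicit than the paper on the final ``mandatory waypoint'' claim, which the paper dispatches as a straightforward verification.
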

\begin{proof}
	Let $(N,t) \coloneqq \pvr(\SM,\pfam(\SM))$. We first verify that $N$ is a well-defined network on
	$X$. By construction, we have $V(N)=V^0(\MDT)\cup X\cup\left(\bigcup_{M\in\P}V^0(N_M)\right)$. Note that
	each $M\in\P$ is a vertex in $V^0(\MDT)$ and thus, the identification of the root $\rho_{N_M}$ of
	$N_M$ with the vertex $M$ in $\MDT$ in Def.~\ref{def:pvr}\eqref{step:T''} is well-defined.
	Furthermore, by definition of MDTs, $\child_\MDT(M)=\Mmax(\SM[M])$ for all strong modules $M$ of
	$\SM$. Moreover, the quotient strudigram $\SM_M\coloneqq\SM[M]/\Mmax(\SM[M])$ has vertex set
	$\Mmax(\SM[M])$ and, by definition of $\pfam(\SM)$, the rooted network $(N_M,t_M)\in \pfam(\SM)$
	explains $\SM_M$ and must, therefore, satisfy $L(N_M)=V(\SM_M)=\Mmax(\SM[M])=\child_\MDT(M)$.
	Hence there is a 1-to-1 correspondence between the leaves of $N_M$ and the elements of
	$\child_\MDT(M)$, and the identification of vertices in Def.~\ref{def:pvr}\eqref{step:T''} is
	well-defined. Moreover, the only edges removed from $\MDT$ while constructing $N$ are edges $(M
	,M')$ for which $M\in\P$ and $M'\in\child_\MDT(M)$, and every edge of $N_M$ with $M\in\P$ appears
	in $N$. Since $N_M$ is a network on $\Mmax(\SM[M])$, there is a directed path from $\rho_{N_M}$ to each
	element in  $\Mmax(\SM[M]) = \child_\MDT(M)$ and these paths remain in $N$. Hence,
	any $v,u\in V^0(N_M)$ with $v\preceq_{N_M} u$ satisfy $v\preceq_N u$. It is now a straightforward
	task to verify that the latter argument together with the construction of $N$ immediately implies
	that, for any $v,u\in V(\MDT)$ with $v\preceq_\MDT u$, there is a directed path $uv$-path in $N$, 
	no directed path $vu$-path in $N$ and every vertex along the (unique)
	directed $uv$-path in $\MDT$ is contained in every directed $uv$-path in $N$. Consequently, $N$
	is a well-defined DAG. Moreover, it is an easy task to verify that the module $X$ remains, by
	construction, the unique root of $N$. In addition, $\{x\}\notin \P$ for all $x\in X$ and by
	Def.~\ref{def:pvr}\eqref{step:flatten leaves} each $x\in X$ is a vertex in $N$. Taken the latter
	two arguments together, $N$ is a well-defined network on $X$. 
	
	By construction all vertices in $V^0(\MDT)$ remain vertices in $N$ and thus, $V^0(\MDT)\cup X \subseteq V(N)$. 
	Note that each vertex $w\in V^0(N)$ is either a vertex in $V^0(\MDT)$ or a vertex contained in $V^0(N_M)$ for some $M\in \P$
	and each such vertex obtained either some color $\tau(w)$ or $t_M(w)$ according to Def.~\ref{def:pvr}\eqref{step:color}. 
	Consequently, 
	the labeling $t$ as specified in Def.~\ref{def:pvr}\eqref{step:color} is well-defined. 
	Hence, $(N,t)  = \pvr(\SM,\pfam(\SM))$ is a labeled network on $X$.
\end{proof}

\begin{figure}
	\centering
	\includegraphics[width=.8\textwidth]{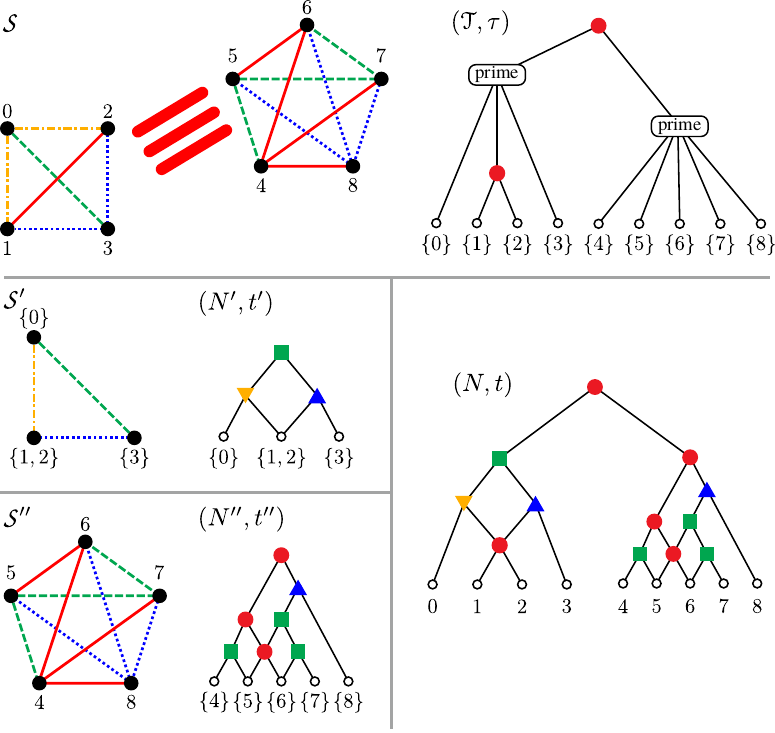}
	\caption{	An example to illustrate the idea of pvr-networks, see Fig.~\ref{fig:exmpl-sketch1} for the color legend.
				   Shown is the edge-colored graph representation of a strudigram $\SM$ on $X  = \{0,1,\dots,8\}$ (in which $\sigma(xy)=$``red'' for all $0\leq x\leq 3$ and $4\leq y\leq 8$, indicated by three red thick edges)
				   together with its MDT $(\MDT, \tau)$ (right to $\SM$). The strudigram
				   $\SM$ contains precisely three non-trivial strong modules, namely $M_0 =\{1,2\}$, $M_1 =\{0,1,2,3\}$ and $M_2 =\{4,5,6,7,8\}$.
					Hence, $\MDstrong(\SM) = \{ \{0\},\{1\},\ldots,\{8\},M_0,M_1,M_2,X\}$. 
				   Both modules $M_1$ and $M_2$ are prime and we have $\P = \{M_1,M_2\}$.
				   Here $\SM[M_1]$ is prime but not primitive since $\{1,2\}\in \MDstrong(\SM)[M_1]$.  
				   The quotient $\SM' = \SM[M_1] / \Mmax(\SM[M_1])$ is primitive, where  $\Mmax(\SM[M_1]) = \{\{0\},\{3\},\{1,2\}\}$.
					The network $(N',t')$ explains $\SM'$. 
				   Moreover, $\SM'' = \SM[M_2] = \SM[M_2] / \Mmax(\SM[M_2])$ is primitive and 
				   is explained by the network $(N'',t'')$.
					In this example, we thus obtain the prime-explaining family
				   $\pfam(\SM)=\{(N',t'), (N'',t'')\}$ of $\SM$. 				   
				    Replacing the prime vertices in  $(\MDT, \tau)$ 
				   by the respective network as specified in Def.\ \ref{def:pvr} yields the network $(N,t)$ that explains $\SM$. }
	\label{fig:exmapl1-pvr}
\end{figure}

The important property of a pvr-network is that it can be used to explain strudigrams, ensured by the following two results. 

\begin{lemma}\label{lem:pvr pairwise lca}
	Let $\SM=(X,\Upsilon,\sigma)$ be a strudigram, $\P$ be the set of all prime vertices in
	its modular decomposition tree $(\MDT,\tau)$ and  	$\pfam(\SM)$ be a prime-explaining family of $\SM$.
	Put $(N,t)\coloneqq \pvr(\SM,\pfam)$ and $\dot v\coloneqq \{v\}$.
	Then, $N$ has the  $\2lca$-property and, for all distinct $x,y\in L(N)$, it holds that
	\[\lca_N(x,y)=\begin{cases}
		\lca_{\MDT}(\dot x,\dot y)&\text{if }\lca_{\MDT}(\dot x,\dot y)\notin\P\\
			\lca_{N_M}(v_x,v_y)&\text{if }\lca_{\MDT}(\dot x,\dot y)=M\in\P,
			\text{where $v_x,v_y\in L(N_M)$ are the unique} \\ 
			&\text{vertices identified with $M_x$ and $M_y$ in $\MDT$, respectively and} \\ 
			&\text{where $x\in M_x$ and $y\in M_y$.}
	\end{cases}\]
		Moreover, if every $(N_M,t_M)\in \pfam(\SM)$ has the $\klca$-property for some $k\leq |X|$,
		then $(N,t)$ has the $\klca$-property.
\end{lemma}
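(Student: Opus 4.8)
The plan is to leverage Lemma~\ref{lem:pvr-well-defined}, which already tells us that $(N,t)$ is a labeled network on $X$ and, crucially, that the tree structure of $\MDT$ is preserved inside $N$ in the following sense: for comparable $u,v\in V(\MDT)$, the unique $uv$-path in $\MDT$ sits inside every $uv$-path in $N$, and no $vu$-path exists. The argument proceeds in three stages. First, I would pin down the cluster system $\mathfrak{C}_N$. By the construction in Def.~\ref{def:pvr}, the descendant leaves below a vertex $M\in V^0(\MDT)$ are exactly the elements of the module $M$ (identifying $\{x\}$ with $x$), because each leaf of $N_M$ is identified with a child $M'\in\child_{\MDT}(M)=\Mmax(\SM[M])$ and the $N_M$ are glued below the corresponding modules; for an internal vertex $w\in V^0(N_M)$ of a gall-replacement, $\CC_N(w)=\bigcup\{M'\mid M'\in\child_{\MDT}(M),\ v_{M'}\preceq_{N_M} w\}$, i.e., $\CC_N(w)$ corresponds to $\CC_{N_M}(w)$ pulled back along the partition $\Mmax(\SM[M])$ of $M$.

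Second, I would prove the displayed formula for $\lca_N(x,y)$ by a case analysis on $L\coloneqq\lca_{\MDT}(\dot x,\dot y)$, which is well-defined since $\MDT$ is a tree. If $L\notin\P$, then the children of $L$ in $\MDT$ are still children of $L$ in $N$ (no edges out of $L$ were removed), so $x$ and $y$ lie below two distinct children $L_x\ne L_y$ of $L$ in $N$; any common ancestor $w$ of $x,y$ must be an ancestor in $\MDT$ of both $\dot x,\dot y$ (here I use the second part of Lemma~\ref{lem:pvr-well-defined} together with the fact that, since $L\notin\P$, no new "shortcut" vertices were inserted strictly below $L$ on the paths to $x$ and $y$ through distinct children), hence $w\succeq_{\MDT} L$, and since $L$ itself is a common ancestor of $x,y$ in $N$ it is the unique $\preceq_N$-minimal one. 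If $L=M\in\P$, then $x\in M_x$ and $y\in M_y$ for distinct $M_x,M_y\in\Mmax(\SM[M])=\child_{\MDT}(M)$, which correspond to distinct leaves $v_x,v_y$ of $N_M$; every common ancestor of $x$ and $y$ in $N$ lying strictly below $M$ must lie in $V^0(N_M)$ and be a common ancestor of $v_x,v_y$ there (using that the only way out of the $N_M$-block is through its root $M$), and conversely $M$ itself is a common ancestor — so $\LCA_N(x,y)=\LCA_{N_M}(v_x,v_y)$ when the latter is well-defined and equals $\{M\}$ otherwise. Since $(N_M,t_M)$ explains a strudigram it has the $\2lca$-property, so $\lca_{N_M}(v_x,v_y)$ is always well-defined; thus $\lca_N(x,y)$ is well-defined in both cases, giving the $\2lca$-property and the formula.

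Third, for the $\klca$-part: given $A\subseteq X$ with $2\le|A|\le k$ (the case $|A|=1$ is trivial), let $M=\lca_{\MDT}(\{\dot a\mid a\in A\})$, which exists in the tree $\MDT$. If $M\notin\P$, the elements of $A$ are spread over at least two children of $M$ in $\MDT$, hence in $N$, and the same argument as above shows $\lca_N(A)=M$ is well-defined. If $M\in\P$, let $A_M\coloneqq\{v_{M'}\mid M'\in\child_{\MDT}(M),\ M'\cap A\ne\emptyset\}\subseteq L(N_M)$; then $|A_M|\le|A|\le k$ and $\lca_N(A)=\lca_{N_M}(A_M)$ (same block-structure argument), which is well-defined because $(N_M,t_M)$ has the $\klca$-property by hypothesis. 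I expect the main obstacle to be handled carefully in the "upward" direction of the inclusion $\LCA_N(x,y)\subseteq$ the claimed set: one must rule out the existence of a common ancestor of $x$ and $y$ that is a proper descendant of $\lca_{\MDT}(\dot x,\dot y)$ but lies \emph{outside} the relevant $N_M$-block or $\MDT$-path — this is exactly where one invokes that the only edges removed were $(M,M')$ with $M\in\P$, that each $N_M$ communicates with the rest of $N$ only through its root and leaves (internal vertex-disjointness in Def.~\ref{def:pfam}), and the path-preservation clause of Lemma~\ref{lem:pvr-well-defined}; everything else is bookkeeping about $\mathfrak{C}_N$ via Obs.~\ref{obs:deflcaY}.
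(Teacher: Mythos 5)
Your proposal is correct and follows essentially the same route as the paper: reduce $\lca_N(A)$ to $\lca_{\MDT}$ via the tree structure, use the internal vertex-disjointness of the blocks $N_M$ and the path-preservation clause of Lemma~\ref{lem:pvr-well-defined} to confine candidate common ancestors to either $\MDT$-vertices or a single block $N_M$, and then invoke the $\klca$-property of $N_M$ on the set of leaves corresponding to the children of $M$ meeting $A$ (your $A_M$ is exactly the paper's $\{M_1,\dots,M_\ell\}$). The only cosmetic difference is ordering --- the paper proves the general $\klca$ case first and specializes to $k=2$ for the displayed formula, whereas you do the pairwise case first --- and your preliminary description of $\mathfrak{C}_N$ is extra bookkeeping the paper does not need.
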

\begin{proof}
	
	Let $\SM=(X,\Upsilon,\sigma)$ be a strudigram, $\P$ be the set of all prime vertices in
	its modular decomposition tree $(\MDT,\tau)$ and $\pfam(\SM)$ be a prime-explaining family of $\SM$.
	Put $(N,t)\coloneqq \pvr(\SM,\pfam)$. 
	Note that every $(N_M,t_M)\in \pfam(\SM)$ has the $\2lca$-property. 
	Assume now that that every $(N_M,t_M)\in \pfam(\SM)$  has the $\klca$-property for some $k\geq 2$. 
	Let $A\subseteq X$ with $|A|\leq k$. 
	Since $\MDT$ is a tree, it is an lca-network and, therefore, $M\coloneqq \lca_{\MDT}(A)$ 
	is well-defined. As already argued in the proof of Lemma~\ref{lem:pvr-well-defined}, $L(N_M)=\Mmax(\SM[M])=\child_\MDT(M)$, for all $M\in\P$.
   Let $M_1,\dots,M_\ell \in \child_\MDT(M)$ be all children of $M$ for which $A\cap M_i\neq \emptyset$. 
	Since $M_1,\dots, M_\ell \in \Mmax(\SM[M])$, we have $M_i\cap M_j = \emptyset$, $1\leq i<j\leq \ell$.
	In particular, $\ell\leq k$ must hold. Moreover, 
	$M$ is the unique inclusion-minimal cluster in $\MDT$ that contains $B \coloneqq \cup_{i=1}^\ell M_i$. The latter arguments
	together with Obs.\ \ref{obs:deflcaY} imply
	$M = \lca_{\MDT}(A) = \lca_{\MDT}(B)$.	
	
	By Lemma \ref{lem:pvr-well-defined}, $N$ is a network on $X$ and thus, 
	the root $\rho_N$ is an ancestor of all vertices in $N$. 	By construction, none of the 
	networks $N_{M'}$ used to replace a prime module $M'\neq M$ along the 
	path from $M$ to any vertex $\{x\}$, in $x\in M_i$, in $\MDT$ shares vertices or edges with any 
	network $N_{M''}$ used to replace a prime module $M''	\neq M$ along the 
	path from from $M$ to $\{y\}$, $y\in M_j$, in $\MDT$, $1\leq i<j\leq \ell$. 
	Let us denote, for a strong module $\tilde M$, with $U(\tilde M)$ either the set $\{ \tilde M\}$ in case $\tilde M\notin \P$
	or the set of vertices in $V^0(N_{\tilde M})\cup \{\tilde M\}$ in the network $N_{\tilde M}$ used to replace $\tilde M$ in case $\tilde M\in \P$.
	By construction of $N$ and Lemma \ref{lem:pvr-well-defined}, 
	for any strong module $M'''$ of $\SM$ with $M\prec_{\MDT} M'''$ we 
	have $v\prec_{\MDT} u$ for all $v\in U(M)$ and   $u\in U(M''')$. 
	Moreover, if $M$ and $M'''$ are $\prec_{\MDT}$-incomparable in $\MDT$, 
	then the vertices in $U(M)$ are pairwise $\prec_{N}$-incomparable with the vertices in  $U(M''')$ . 
	In other words, no vertex in $U(M''')$	can be a least common ancestors of $A$ for all such $M'''$. 
	It now follows that either 	$M = \lca_{N}(A) = \lca_{N}(B)$ in case
	$M\notin \P$ or that the least common ancestors of $A$ and $B$ are located in $V(N_M)$ in case	$M\in \P$. 
	In the latter case, the least common ancestors of $A$ and $B$ coincide and, they are equal
	to the least common ancestors of $\{M_1,\dots,M_\ell\} \subseteq L(N_M)$. Since 
	 $\ell\leq k$ and $N_M$ has the $\klca$-property, 
	 $\lca_{N_M}(\{M_1,\dots,M_\ell\}) = \lca_N(A)$ is well-defined.
	 Since $A\subseteq X$ with $|A|\leq k$ were arbitrarily chosen, $N$ has the $\klca$-property. 
	 
	In particular, since every $(N_M,t_M)\in \pfam(\SM)$ has the $\2lca$-property, 
	$N$ must have the $\2lca$-property.
	Reusing the latter arguments, it is a straightforward task to verify that 
	$\lca_N(x,y)=\lca_{\MDT}(\{x\},\{y\})$ in case $M = \lca_{\MDT}(\{x\},\{y\})\notin\P$
	and, otherwise, $\lca_N(x,y)  = \lca_{N_M}(v_x,v_y)$	where $v_x,v_y\in L(N_M)$ are the unique vertices identified with
	$M_x$ and $M_y$ in $\MDT$, respectively.
\end{proof}

\begin{proposition}\label{prop:pvr-explains-S}
	The pvr-network $\pvr(\SM,\pfam(\SM))$ explains a strudigram $\SM$ for  all  prime-explaining families $\pfam(\SM)$.
\end{proposition}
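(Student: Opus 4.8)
The plan is to check the two requirements of Definition~\ref{def:strudi-explain} for the labeled directed graph $(N,t)\coloneqq\pvr(\SM,\pfam(\SM))$: that $N$ is a network on $X$ with the $\2lca$-property, and that $\SM=\mathcal{S}(N,t)$, i.e.\ $t(\lca_N(x,y))=\sigma(xy)$ for all distinct $x,y\in X$. The first requirement is already fully supplied by the preceding lemmas: Lemma~\ref{lem:pvr-well-defined} gives that $(N,t)$ is a well-defined labeled network on $X$, and Lemma~\ref{lem:pvr pairwise lca} gives the $\2lca$-property together with the explicit formula for $\lca_N(x,y)$. So the entire content of the proof is the label equality, and the formula from Lemma~\ref{lem:pvr pairwise lca} is the tool for it: it reduces the claim to a case analysis according to whether $M\coloneqq\lca_{\MDT}(\{x\},\{y\})$ is a prime vertex of the MDT. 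Note that since $x\neq y$ we have $|M|>1$, so $M\in\MDstrong^0(\SM)$ and all labels below are defined; moreover $M=\lca_{\MDT}(\{x\},\{y\})$ forces $x$ and $y$ to lie in distinct members of $\child_\MDT(M)=\Mmax(\SM[M])$ (otherwise one such member would be a strictly smaller strong module containing both), a fact used in both cases.

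In the case $M\notin\P$, Lemma~\ref{lem:pvr pairwise lca} yields $\lca_N(x,y)=M$, and Definition~\ref{def:pvr}\eqref{step:color} gives $t(M)=\tau(M)$. As $M$ is not prime, Theorem~\ref{thm:kseries=k'series} applies to $\SM[M]$, giving a unique $k$ with $\SM[M]=\SM[W_1]\join_k\cdots\join_k\SM[W_\ell]$ where $\{W_1,\dots,W_\ell\}=\Mmax(\SM[M])$, so $\tau(M)=k$; since $x$ and $y$ lie in distinct $W_i$'s, the definition of the $k$-join gives $\sigma(xy)=k=t(\lca_N(x,y))$. In the case $M\in\P$, write $x\in M_x$, $y\in M_y$ with $M_x\neq M_y$ in $\Mmax(\SM[M])$; Lemma~\ref{lem:pvr pairwise lca} gives $\lca_N(x,y)=\lca_{N_M}(v_x,v_y)$ where $v_x,v_y$ are the leaves of $N_M$ identified with $M_x,M_y$, and Definition~\ref{def:pvr}\eqref{step:color} gives $t(\lca_N(x,y))=t_M(\lca_{N_M}(v_x,v_y))$ since that vertex lies in $V^0(N_M)$. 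Because $(N_M,t_M)$ explains the quotient $\SM[M]/\Mmax(\SM[M])$, whose vertex set is $\Mmax(\SM[M])$, the value $t_M(\lca_{N_M}(M_x,M_y))$ equals the color of the pair $\{M_x,M_y\}$ in that quotient, which by the definition of the quotient (using that $\sigma$ is constant between the disjoint modules $M_x$ and $M_y$) equals $\sigma(xy)$. Hence $t(\lca_N(x,y))=\sigma(xy)$ in both cases, and since the argument uses nothing about $\pfam(\SM)$ beyond Definition~\ref{def:pfam}, it holds for every prime-explaining family.

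The only delicate point — and it is mild — is the bookkeeping around Definition~\ref{def:pvr}: one must keep straight the triple correspondence between the leaves of $N_M$, the children of $M$ in $\MDT$ (which are exactly $\Mmax(\SM[M])$), and the vertices of the quotient $\SM[M]/\Mmax(\SM[M])$, and one must note that the relabelling in Step~\eqref{step:flatten leaves} touches only the leaves and hence does not affect any $\lca$ computation among inner vertices. All of this has effectively been prepared in the proofs of Lemmas~\ref{lem:pvr-well-defined} and~\ref{lem:pvr pairwise lca}, so the proof is essentially an assembly of those two lemmas with Theorem~\ref{thm:kseries=k'series} and the definition of the quotient; no genuinely new argument is required.
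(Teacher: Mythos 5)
Your proposal is correct and follows essentially the same route as the paper's proof: reduce to the two cases $\lca_{\MDT}(\dot x,\dot y)\notin\P$ and $\lca_{\MDT}(\dot x,\dot y)\in\P$ via Lemma~\ref{lem:pvr pairwise lca}, handle the first by the MDT labeling and the second by the fact that $(N_M,t_M)$ explains the quotient together with Observation~\ref{obs:quotient}. Your explicit appeal to Theorem~\ref{thm:kseries=k'series} in the non-prime case is just a spelled-out version of the paper's remark that ``the definition of MDTs ensures $\sigma(xy)=\tau(M)$,'' not a different argument.
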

\begin{proof}
	Let $\SM=(X,\Upsilon,\sigma)$ be a strudigram, $(\MDT,\tau)$ its modular decomposition tree, 
	 and put $(N,t)\coloneqq \pvr(\SM,\pfam(\SM))$. 
	To simplify writing, we write $\dot x$ for the the singleton modules $\{x\} \in L(\MDT)$. 
	Let $x,y\in X$ be distinct vertices. 
	Since $\MDT$ is a tree, $M\coloneqq\lca_\MDT(\dot x,\dot y)$ 
	is well-defined. Aided by Lemma~\ref{lem:pvr pairwise lca}, we consider two cases: 
	$M\notin\P$, respectively $M\in\P$.

	Assume that $M\notin\P$, so that $M=\lca_\MDT(x,y)\in V(\MDT)\setminus\P$. Applying 
	Lemma~\ref{lem:pvr pairwise lca} and $t$ as specified in Def.~\ref{def:pvr}\eqref{step:color} 
	ensures  $t(\lca_{N}(x,y))=\tau(M)$. On the other hand, the definition of MDTs ensures 
	that $\sigma(xy)=\tau(M)$, since $\tau(v)\neq\primeL$. 
	Thus $t(\lca_{N}(x,y))=\sigma(xy)$ holds.

	Assume now that $M\in\P$. Since $\pfam(\SM)$ is a prime-explaining family, the network $(N_M,t_M)$ explain the strudigram
	$\SM_M\coloneqq\SM[M]/\Mmax(\SM[M])$, where $M=L(\MDT(M))$. In other words we have that 
	$L(N_M)=\Mmax(\SM[M])$ and $\sigma_M(M'M'')=t_M(\lca_{N_M}(M',M''))$
	for all distinct $M',M''\in\Mmax(\SM[M])$, where $\sigma_M$ denotes the map of $\SM_M$.
	Due to Observation~\ref{obs:quotient}, this means that if $M',M''\in\Mmax(\SM)$ are distinct, 
	and if $u\in M'$ respectively $v\in M''$ are vertices of $\SM$, then 
	\begin{equation}
		\sigma(uv)=t_M(\lca_{N_M}(M',M'')).
		\label{eq:sigma equal to}
	\end{equation}
	Moreover, by construction of $N$ we have that $L(N_M)=\child_\MDT(M)=\Mmax(\SM[M])$, a set which, 
	since $M=\lca_\MDT(\dot x,\dot y)$, must contain two distinct  $M_x$ and $M_y$ which satisfy 
	$\dot x\preceq_\MDT M_x$ respectively $\dot y\preceq_\MDT M_y$. In particular, $M_x \cap M_y = \emptyset$. 
	By construction of $N$ and application of Lemma \ref{lem:pvr-well-defined}, 
	$M_x=L(N(M_x))$ and $M_y=L(N(M_y))$. 
	Since $x\in M_x$ and $y\in M_y$, Eq.~\eqref{eq:sigma equal to} and 
	Lemma~\ref{lem:pvr pairwise lca} imply that
	\begin{equation*}
		t(\lca_{N}(x,y))=t_M(\lca_{N_M}(M_x,M_y))=\sigma(xy).
	\end{equation*}
	In summary, $t(\lca_{N}(x,y))=\sigma(xy)$ for all $x,y\in X$ and, therefore, $(N,t)$ explains $\SM$.
\end{proof}

Lemma \ref{lem:pvr pairwise lca} together with Prop.\ \ref{prop:SM-hasExpl} and \ref{prop:pvr-explains-S} implies
\begin{corollary}\label{cor:pvr-lca-network}
	If every  $(N_M,t_M)\in \pfam(\SM)$ in the prime-explaining family of $\SM$
	has the $\klca$-property, then $\pvr(\SM,\pfam)$ is a network with $\klca$-property that explains $\SM$.

	In particular, for every strudigram $\SM$, there is a prime-explaining family $\pfam(\SM)$
	consisting solely of labeled lca-networks, in which case
	 $\pvr(\SM,\pfam(\SM))$ is an lca-network that explains $\SM$.
\end{corollary}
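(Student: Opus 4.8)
The plan is to obtain both assertions by assembling the three cited results, essentially without new arguments. Fix a strudigram $\SM=(X,\Upsilon,\sigma)$, let $\P$ denote its set of prime modules, let $\pfam(\SM)=\{(N_M,t_M)\mid M\in\P\}$ be a prime-explaining family, and set $(N,t)\coloneqq\pvr(\SM,\pfam(\SM))$. By Lemma~\ref{lem:pvr-well-defined}, $(N,t)$ is a labeled network on $X$, and by Proposition~\ref{prop:pvr-explains-S} it explains $\SM$, irrespective of the chosen prime-explaining family. Hence the only thing left to establish for the first assertion is the $\klca$-property of $N$: assuming that each $(N_M,t_M)$ has the $\klca$-property for the given value $k\le|X|$, the ``Moreover'' part of Lemma~\ref{lem:pvr pairwise lca} yields precisely that $(N,t)$ has the $\klca$-property. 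This proves the first statement.

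For the ``in particular'' part I would first exhibit a prime-explaining family made of lca-networks. For each $M\in\P$ the quotient $\SM[M]/\Mmax(\SM[M])$ is itself a strudigram, so Proposition~\ref{prop:SM-hasExpl} supplies a phylogenetic lca-network explaining it. Relabeling inner vertices if necessary---which changes neither whether a labeled network has the $\klca$-property nor which strudigram it explains---we may assume these networks are pairwise internal vertex-disjoint, so that they form a prime-explaining family $\pfam(\SM)$. Each such $N_M$ is, by definition, an lca-network on its leaf set, hence has the $\klca$-property for all $k\le|L(N_M)|$; since $|A|\le|L(N_M)|$ holds automatically for every $A\subseteq L(N_M)$, each $N_M$ in fact has the $\klca$-property for every $k$, in particular for every $k\le|X|$. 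Applying the first statement (equivalently, the ``Moreover'' part of Lemma~\ref{lem:pvr pairwise lca}) once for each $k\in\{1,\dots,|X|\}$ then shows that $\pvr(\SM,\pfam(\SM))$ has the $\klca$-property for all $k\le|X|$; being a network on $X$ by Lemma~\ref{lem:pvr-well-defined}, it is therefore an lca-network, and it explains $\SM$ by Proposition~\ref{prop:pvr-explains-S}.

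No step poses a genuine difficulty, since the substance is carried by Lemma~\ref{lem:pvr pairwise lca} and the corollary is a short assembly. The two minor points I would be careful to record are that the internal-vertex-disjointness condition in Definition~\ref{def:pfam} can always be arranged by renaming inner vertices, and that an lca-network carries the $\klca$-property for every $k$ (because subsets of its leaf set are automatically small enough), so that via Lemma~\ref{lem:pvr pairwise lca} the pvr-network inherits the $\klca$-property for all $k\le|X|$ and hence qualifies as an lca-network.
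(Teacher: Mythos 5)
Your proposal is correct and follows exactly the route the paper intends: the paper derives this corollary by simply citing Lemma~\ref{lem:pvr pairwise lca} together with Propositions~\ref{prop:SM-hasExpl} and~\ref{prop:pvr-explains-S}, and your write-up is just a careful unpacking of that same assembly (including the sensible observations that internal vertex-disjointness can be arranged by renaming and that an lca-network has the $\klca$-property for every $k$).
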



\section{Galled-trees and \gatex Strudigrams}
\label{sec:gatex}

As shown in the previous sections, every strudigram $\SM$ can be explained by some 
labeled lca-network which has the nice property that $\lca_N(A)$ is well-defined for all $A\subseteq L(N)$. 
However, these networks can become quite arbitrary, as illustrated in Figure \ref{fig:lca-N}.
We thus consider here much simpler lca-networks, so-called galled trees.
	
A  \emph{gall} $C$ is a subgraph of a directed graph $G$ such that $C$ is composed of two
internal-vertex disjoint di-paths that intersect only in their start- and end-vertex. To be more
precise, $C$ is a gall, if there are two directed $xy$-paths $P^1$ and $P^2$ such that $V(P^1)\cap
V(P^2)=\{x,y\}$ and such that $V(P^1)\cup V(P^2) = V(C)$ and $E(P^1)\cup E(P^2) = E(C)$. In this
case, $\rho_C\coloneqq x$ is the \emph{root} and $\eta_C\coloneqq y$ the \emph{hybrid} of $C$. The
paths $P^1$ and $P^2$ from which $C$ is composed are also called \emph{sides} of the gall $C$.
A \emph{galled-tree} is a phylogenetic network $N$ where every nontrivial biconnected component is a gall.
In this section, we characterize strudigrams that can be explained by labeled galled-trees. 
Note that in \cite{HS:22}, galled-trees have been called level-1 networks. 
Clearly, every \emph{phylogenetic} tree is a galled-tree. However, in general, galled-trees are not required to  be leaf-separated.

A particular type of tree of importance here is a
\emph{caterpillar tree}, whose inner vertices consist of a single (directed) path $P^1=v_1v_2\ldots
v_k$ (possibly $k=1$) such that $v_i$ has a single leaf-child for each $i=1,2,\ldots,k-1$ and such
that $v_k$ has precisely two children, both of which are leafs. Additionally, a caterpillar tree will
by definition be rooted at the vertex $v_1$. Any graph isomorphic to the subgraph of a caterpillar
tree induced by $v_k$ and its two children $x$ and $y$ is called a \emph{cherry}. In that case, the
leaves $x$ and $y$ are said to be \emph{part of a cherry}.

\begin{theorem}[{\cite[Thm.\ 11]{Hellmuth2023}}]
  \label{thm:galled-no3overlap}
  $\mathfrak{C}$ is the clustering system of a galled-tree if and only if
  $\mathfrak{C}$ is closed and satisfies (L) and (N3O).  
  In particular, $\Hasse[\mathfrak{C}]$ is a galled-tree.
\end{theorem}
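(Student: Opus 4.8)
The plan is to prove the two implications separately, treating the backward direction through $G\doteq\Hasse[\mathfrak{C}]$. Throughout, closedness is the ``cheap'' hypothesis: by Theorem~\ref{thm:Chat-lcaN} it already equates to $\mathfrak{C}$ being the clustering system of some lca-network, and to $G\doteq\Hasse[\mathfrak{C}]$ being one. The real content is to relate conditions (L) and (N3O) to the requirement that the non-trivial biconnected components of the network are galls.

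\medskip\noindent\emph{Galled-tree $\Rightarrow$ closed, (L), (N3O).}
Let $N$ be a galled-tree. The key lemma I would establish is an overlap characterization: for distinct $C_1,C_2\in\mathfrak{C}_N$, the clusters $C_1,C_2$ overlap if and only if, choosing $\preceq_N$-minimal vertices $u,v$ with $\CC_N(u)=C_1$ and $\CC_N(v)=C_2$, the vertices $u$ and $v$ are proper internal vertices lying on the two distinct sides of one common gall $C$; in that case $C_1\cap C_2=\CC_N(\eta_C)$. This rests on the standard facts that in a galled-tree two incomparable vertices have a common descendant only if they sit on opposite sides of a gall (all such common descendants being routed through the hybrid), and that every vertex is a proper internal vertex of at most one gall. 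Granting it, closedness is immediate (if $C_1,C_2$ are incomparable and meet then $C_1\cap C_2=\CC_N(\eta_C)\in\mathfrak{C}_N$); (L) follows because the gall $C$ ``carrying'' $C_1$ is the same whichever of $C_2,C_3$ we overlap it with, so $C_1\cap C_2=\CC_N(\eta_C)=C_1\cap C_3$; and (N3O) follows because three pairwise overlapping clusters would force their three minimal representatives into one gall with two on a common side, hence comparable --- contradicting that the corresponding clusters overlap.

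\medskip\noindent\emph{Closed, (L), (N3O) $\Rightarrow$ $\Hasse[\mathfrak{C}]$ is a galled-tree.}
Put $G\doteq\Hasse[\mathfrak{C}]$, so that $\mathfrak{C}_G=\mathfrak{C}$. Since $\mathfrak{C}$ is a clustering system, every non-singleton cluster $C$ has at least two children in $G$: if $C'$ were its only child, any $x\in C\setminus C'$ would have $\{x\}\in\mathfrak{C}$ with $\{x\}\subseteq C$ but $\{x\}\not\subseteq C'$, forcing a second child. Hence $G$ has no vertex of out-degree $1$, so $G$ is phylogenetic; by closedness and Theorem~\ref{thm:Chat-lcaN}, $G$ is moreover an lca-network. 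Now fix a non-trivial biconnected component $B$. By the structure theory of biconnected components of networks, $B$ has a unique source $\rho_B$ such that every vertex of $B$ is reachable from $\rho_B$ by a directed path inside $B$. I would then argue, in order: (i) $B$ has a unique sink $\eta_B$ --- two sinks joined to $\rho_B$ by internally disjoint directed paths would produce overlapping clusters with unequal intersections (violating (L)), or together with a branch vertex three pairwise overlapping ones (violating (N3O)); (ii) every vertex of $B$ other than $\rho_B$ has out-degree at most one within $B$, and every vertex other than $\eta_B$ has in-degree at most one within $B$ --- any extra such edge would again spawn a forbidden overlap triple, here crucially using that the cluster map of $\Hasse[\mathfrak{C}]$ is injective and that all singletons lie in $\mathfrak{C}$, so that each internal vertex of $B$ ``owns'' a private leaf not below any vertex of $B$ incomparable to it. Together, (i) and (ii) force $B$ to be the union of two internally disjoint directed $\rho_B\eta_B$-paths, i.e.\ a gall. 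Thus every non-trivial biconnected component of $G$ is a gall, so $G=\Hasse[\mathfrak{C}]$ is a galled-tree --- which also yields the ``in particular'' assertion.

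\medskip\noindent\emph{Main obstacle.}
The hard part is the backward direction: distilling the purely set-theoretic conditions (L) and (N3O) into the rigid ``exactly two internally disjoint directed source--sink paths'' shape of every biconnected component. The delicate steps are extracting from a non-gall biconnected component an \emph{explicit} triple of pairwise overlapping clusters (or an (L)-violating pair) rather than merely an undirected theta-subdivision; the orientation bookkeeping needed to do this inside a DAG; and verifying that cut vertices shared by several galls create no problems in either direction. The forward direction is comparatively routine once the overlap characterization for galled-trees is in hand.
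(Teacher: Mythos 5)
Your proposal takes a genuinely different route from the paper: the paper does not reprove this statement at all, but simply invokes \cite[Thm.~11]{Hellmuth2023}, where the equivalence is established for networks whose non-trivial biconnected components are galls but which need not be phylogenetic, and then observes (i) that the only-if direction carries over verbatim to phylogenetic galled-trees and (ii) that for the if direction the cited theorem already yields that $\Hasse[\mathfrak{C}]$ is a galled-tree. The only content the paper's argument needs beyond the citation is the bridge across the phylogenetic requirement, and your observation that every non-singleton cluster has at least two children in $\Hasse[\mathfrak{C}]$ (because all singletons belong to a clustering system) is precisely that bridge. Your forward direction, via the overlap characterization $C_1\cap C_2=\CC_N(\eta_C)$ for clusters whose minimal representatives sit on opposite sides of a common gall, is essentially sound and reconstructs the lemmas of \cite{Hellmuth2023} that the paper uses elsewhere (e.g.\ the fact quoted in the proof of Lemma~\ref{lem:handsome=>P4/RT}); what it buys is a self-contained treatment, at the cost of re-deriving a substantial external result.

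That cost is where the gap lies: if the intent is a self-contained proof, the backward direction is not yet one. You correctly flag it as the hard part, but steps (i) and (ii) of your plan are only asserted. In particular, the claim that each internal vertex of a non-trivial biconnected component $B$ ``owns a private leaf not below any vertex of $B$ incomparable to it'' does not follow merely from injectivity of the cluster map and the presence of all singletons: incomparability of two vertices of $\Hasse[\mathfrak{C}]$ yields, for each single incomparable partner, an element separating the two clusters, but not one leaf simultaneously avoiding every incomparable vertex of $B$; and to exhibit an \emph{overlap} you must additionally produce a nonempty intersection, i.e.\ a common descendant leaf of the two incomparable vertices, which requires an argument about how directed paths inside $B$ reconverge. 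Extracting an explicit (N3O)- or (L)-violating configuration from an arbitrary non-gall biconnected component is exactly the technical core of \cite[Thm.~11]{Hellmuth2023}, and your sketch leaves it at the level of a plan. The pragmatic fix is to do what the paper does---cite that theorem---and retain only your phylogenetic bridging argument, which is the one piece of genuinely new content the statement requires in this setting.
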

\begin{proof}
	These statements have been proven in \cite{Hellmuth2023} for networks
	whose non-trivial biconnected components are galls but that are not
	necessarily phylogenetic. Clearly, the \emph{only-if} direction 
	remains true for galled-trees as defined here.	
	Suppose now that  $\mathfrak{C}$ is closed and satisfies (L) and (N3O).  
	By \cite[Thm.\ 11]{Hellmuth2023}, 
	$\Hasse[\mathfrak{C}]$ is a galled-tree, which,  in particular, 
	establishes the \emph{if} direction of the first statement.
\end{proof}

Recall that in lca-networks, the lca of any subset of leaves is well-defined. For galled-trees, the following stronger statement holds.
\begin{lemma}
	\label{lem:lev1-klca}
	Every galled-tree is a global lca-network.
\end{lemma}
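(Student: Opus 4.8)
The plan is to prove the stronger statement that, for every non-empty $A\subseteq V(N)$, any two $\preceq_N$-minimal common ancestors of $A$ coincide; since $\rho_N$ is always a common ancestor of $A$, this yields a well-defined $\lca_N(A)$. The whole argument rests on the structure of the subgraph $N_u$ of $N$ induced by the \emph{ancestor set} $\mathrm{anc}(u)\coloneqq\{w\in V(N)\mid w\succeq_N u\}$ of a fixed vertex $u$.

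First I would record three facts about $N_u$. (i) One checks directly that $\rho_N$ is its unique source and $u$ its unique sink, since every directed path from $\rho_N$ to some $w\succeq_N u$, and every directed path from such a $w$ to $u$, stays inside $\mathrm{anc}(u)$; in particular $N_u$ is connected and every vertex of $N_u$ lies on a directed path from $\rho_N$ to $u$ within $N_u$. (ii) By an elementary graph-theoretic argument using (i), every cut vertex of $N_u$ — as well as $\rho_N$ and $u$ — lies on \emph{every} directed $\rho_N$-to-$u$ path in $N_u$, and is therefore $\succeq_N$-comparable to every vertex of $\mathrm{anc}(u)$. (iii) Since the non-trivial biconnected components of the galled-tree $N$ are galls, and since a biconnected induced subgraph on at least three vertices of a gall must be the whole gall, every non-trivial biconnected component of $N_u$ equals a gall $C$ of $N$ with $\eta_C\succeq_N u$; moreover, as an interior side-vertex of a gall lies in no other biconnected component of $N$, such a gall $C$ is uniquely determined by any of its interior side-vertices. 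Combining (i)--(iii): if $w\in\mathrm{anc}(u)$ is $\succeq_N$-incomparable to some other vertex of $\mathrm{anc}(u)$, then $w$ is neither $\rho_N$, nor $u$, nor a cut vertex of $N_u$, hence lies in the interior of a non-trivial block of $N_u$ (a non-cut-vertex sitting in a single-edge block would be a source or sink of $N_u$, i.e.\ $\rho_N$ or $u$); by (iii) that block is a uniquely determined gall $C$ of $N$ with $w$ an interior side-vertex, so $\eta_C\prec_N w$ while $\eta_C\succeq_N u$.

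With this in hand, let $\emptyset\neq A\subseteq V(N)$ and suppose $w_1\neq w_2$ are both $\preceq_N$-minimal common ancestors of $A$; then $w_1$ and $w_2$ are $\succeq_N$-incomparable. Fix any $a\in A$. Since $w_1,w_2\in\mathrm{anc}(a)$ and $w_1$ is incomparable to $w_2$ there, the previous paragraph yields a gall $C$ of $N$ — namely the unique non-trivial biconnected component of $N$ containing $w_1$ — such that $w_1$ is an interior side-vertex of $C$, $\eta_C\prec_N w_1$, and $\eta_C\succeq_N a$. Since $C$ does not depend on $a$, we obtain $\eta_C\succeq_N a$ for \emph{all} $a\in A$, so $\eta_C$ is a common ancestor of $A$ with $\eta_C\prec_N w_1$, contradicting the minimality of $w_1$. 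Hence $A$ has a unique $\preceq_N$-minimal common ancestor, $\lca_N(A)$ is well-defined, and $N$ is a global lca-network. The most delicate step is the structural description in fact (iii): verifying that the non-trivial biconnected components of $N_u$ are \emph{full} galls of $N$ rather than proper biconnected sub-configurations, together with the comparability claim (ii). Both belong to the level-$1$ structure theory of galled-trees, so where available I would invoke the corresponding statements of \cite{Hellmuth2023} rather than reprove them.
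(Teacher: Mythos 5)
Your argument is correct in substance, but it takes a genuinely different route from the paper. The paper's proof is a two-line reduction: it augments $N$ to $N^*$ by attaching a new leaf-child to every inner vertex, observes that $N^*$ is still a galled-tree, invokes a cited result of \cite{Hellmuth2023} to get that $N^*$ is an lca-network (on leaf sets), and then transfers this to arbitrary vertex sets of $N$ via a cited result of \cite{SCHS:24}. You instead give a self-contained structural proof: two incomparable minimal common ancestors $w_1,w_2$ of $A$ force $w_1$ to sit, for each $a\in A$, as a non-cut, non-extremal vertex of the ancestor-induced DAG $N_a$, hence inside a full gall $C$ of $N$ whose hybrid $\eta_C$ satisfies $\eta_C\prec_N w_1$ and $\eta_C\succeq_N a$, contradicting minimality. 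This buys a proof that does not depend on the two external results (and, as a by-product, isolates exactly which vertices of a galled-tree can be incomparable to a fellow ancestor), at the cost of redoing a piece of the level-1 block theory; the paper's version is shorter but opaque without the cited lemmas.

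Two substeps deserve tightening. First, from ``$w_1$ lies in the interior of a non-trivial block $C$ of $N_a$'' you conclude that $w_1$ is an \emph{interior side-vertex} of the gall $C$, i.e.\ $w_1\notin\{\rho_C,\eta_C\}$; this needs the (easy, but unstated) observation that $\rho_C$ has in-degree $0$ and $\eta_C$ out-degree $0$ within $C$, so each of them either equals $\rho_N$ resp.\ $a$ or is incident to an edge of another block of $N_a$ and is therefore a cut vertex of $N_a$ --- whereas $w_1$ is neither. Second, the claim that ``an interior side-vertex of a gall lies in no other biconnected component of $N$'' is false as stated: such a vertex may lie in trivial blocks and may even be the \emph{root} of a second gall. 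What you actually need, and what is true, is that a vertex is an interior side-vertex of at most one gall: interior side-vertices have in-degree $1$ in $N$ and their unique parent also lies on the gall, so two galls sharing such a vertex would share two vertices. With these repairs the argument is complete.
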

\begin{proof}
	Let $N=(V,E)$ be a galled-tree. We may construct a network $N^*=(V^*,E^*)$ from $N$ by adding, for every
	non-leaf vertex $v\in V$ a new vertex $x_v$ to $V^*$ and the edge $(v, x_v)$ to $E^*$ (c.f.
	\cite[Def.~3.9]{SCHS:24}). Clearly $N^*$ remains a galled-tree. Lemma~49 of \cite{Hellmuth2023} implies that $N^*$  is an lca-network. 
	This allows us to apply Proposition~3.11 of \cite{SCHS:24} to conclude that $\lca(W)$ is well-defined for every
	$\emptyset\neq W\subseteq V(N)$. 
\end{proof}

From here on, we assume that the fact that a galled-tree
is a global lca-network is known and we avoid to mention it each time. In particular, every galled-tree
has the $\2lca$-property, and $\mathcal{S}(N,t)$ is thus defined for every labeled galled-tree
$(N,t)$. A strudigram the can be explained by a galled-tree is \gatex (Galled-Tree Explainable).

\begin{definition}\label{def:elementary}
	A galled-tree $N$ on $X$ is \emph{elementary} if it contains exactly one gall $C$ and satisfies the following conditions: 
	\begin{enumerate}[label=(\roman*)]
		\item Every vertex of $N$ is a vertex of $C$ or the child of a vertex in $C$-
		\item The root $\rho_C$ has no leaf-children, except possibly $\eta_C$.
		\item Each inner vertex $v\neq \rho_C$ has precisely one leaf-child $x$ with $\indeg_N(x)=1$.
	\end{enumerate}
\end{definition}

Note that by condition (i), $\rho_N=\rho_C$. By assumption, the gall $C$ is the
only gall of $N$ and thus every elementary galled-tree has a unique hybrid vertex, namely, $\eta_C$.
Moreover, this hybrid-vertex is either a leaf or has precisely one child and this child is a leaf.
See Figure~\ref{fig:elementary} and its caption for some descriptive examples.

\begin{figure}
\includegraphics[width=\textwidth]{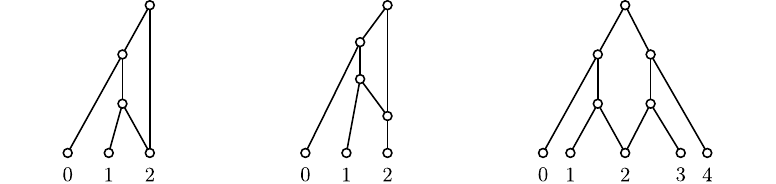}
\caption{Three elementary galled-trees are shown. Note, in particular, that an elementary galled-tree can be leaf-separated (middle) or not (left resp. right). Only the right-most network is a strong galled-tree.
}\label{fig:elementary}
\end{figure}

A labeled galled-tree $(N,t)$ is \emph{discriminating} if, for all adjacent inner vertices $u$ and $v$, we
have $t(u)\neq t(v)$. If $t(u)\neq t(v)$ for all edges $(u,v) \in E(N)$ where $v$ is an inner tree-vertex, then
$(N,t)$ is \emph{quasi-discriminating}. 
Since trees have no hybrid
vertices any quasi-discriminating tree $(T,t)$ is  discriminating.

\begin{definition}
For a path $P$, let $\widetilde V(P)$ be the set of vertices in $P$ that are distinct from its end vertices.
A gall $C$ with sides $P_1$ and $P_2$ in a labeled galled-tree $(N,t)$ is \emph{strong} if 
it satisfies \smallskip
\begin{enumerate}[label=(S\arabic*)]
\item $\widetilde V(P^i) \neq \emptyset$ for $i\in \{1,2\}$,  and \label{def:strong1}\smallskip
\item if $|\widetilde V(P^1)|= |\widetilde V(P^2)|=1$, then $|\{t(v_1), t(v_2),t(\rho_C)\}|= 3$ 
where $\widetilde V(P^i) = \{v_i\}$  for $i\in \{1,2\}$. \label{def:strong2} \smallskip
\end{enumerate}
A galled-tree is \emph{strong} if all its galls are
\emph{strong}.
\end{definition}
Note that trees vacuously are strong galled-trees. See also Figure~\ref{fig:elementary} and Figure~\ref{fig:rainbowT} for further (non-)examples of strong (elementary) galled-trees.

In the following, we will focus on labeled galled-trees that are strong, elementary, and quasi-discriminating.
As we will soon see, this type of galled-tree is closely connected to the following class of strudigrams.

\begin{definition}\label{def:polarcat}
    We say that a strudigram $\SM=(X,\Upsilon,\sigma)$ is a \emph{polar-cat} if there exists a vertex $x\in X$ and 
    induced strudigrams $\SM_1$ and $\SM_2$ of $\SM$ such that \smallskip
    \begin{enumerate}
        \item[(i)] $\SM-x = (\SM_1-x)\join_k (\SM_2-x)$ for some $k\in \Upsilon$.
        \item[(ii)] For $i\in\{1,2\}$, $\SM_i$ can be explained by a labeled discriminating caterpillar tree $(T_i,t_i)$ such that 
        \begin{enumerate}
            \item[(a)] $x$ is part of the cherry in $T_i$, and 
            \item[(b)] $t_i(\rho_{T_i})\neq k$.
        \end{enumerate} \smallskip
    \end{enumerate}
    In that case, we also say that $\SM$ is a \emph{$\join_k$-$(x, \SM_1, \SM_2)$-polar-cat}. 
\end{definition}
For an illustrative example of Definition~\ref{def:polarcat}, see Fig.~\ref{fig:defNpc}.
The concept of polar-cats was introduced in \cite{HS:22} in the context of graphs (without
edge-colors) that are explained by galled-trees, and Def.~\ref{def:polarcat} is a direct generalization
to the case of strudigrams. The name polar-cat stems from the fact that the respective substrudigrams $\SM_1$ and $\SM_2$ 	are \textbf{polar}izing,
i.e. the requirement of $t_i(\rho_{T_i})\neq k$, and \textbf{cat}erpillar-explainable.
We note in passing the polar-cats as defined in  \cite{HS:22} must have at least four vertices. 
We relaxed this condition to deal with the general case of rainbow-triangles.

In fact, rainbow-triangles are worth remarking upon further. By Theorem~\ref{thm:unp<=>treeExpl}\ref{thm:item-M1}, any strudigram on two vertices can be
explained by a labeled tree. Moreover, it is an easy task to verify that among all strudigrams on
three vertices, only the rainbow-triangle is primitive, and thus, cannot be explained by a tree,
see Fig.\ \ref{fig:rainbowT}. The latter arguments together with Theorem~\ref{thm:unp<=>treeExpl}
imply	

\begin{observation}\label{obs:rainbowtriangle}
	The smallest strudigram that is not explained by a labeled tree is the rainbow-triangle
	and rainbow-triangles are the only strudigrams on three vertices that are primitive. 
	In particular, rainbow-triangles are truly-primitive.
	Nevertheless, rainbow-triangles are \textup{\gatex} and can, in particular, be explained by a 
	strong, elementary, quasi-discriminating galled-tree.
\end{observation}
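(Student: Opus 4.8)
The plan is to handle the ``smallest / unique primitive'' parts by a quick case analysis on three vertices, and then to settle \gatex-ness by exhibiting one explicit labeled galled-tree and verifying it is strong, elementary and quasi-discriminating against the definitions.

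For the counting part I would first note that every strudigram on at most two vertices is tree-explainable: the one-vertex case is trivial, and a two-vertex strudigram $\SM[\{x,y\}]$ equals $\SM[\{x\}]\join_{\sigma(xy)}\SM[\{y\}]$, so its only non-singleton module $\{x,y\}$ is $k$-series and not prime, whence Theorem~\ref{thm:unp<=>treeExpl} applies (concretely, the star with root labeled $\sigma(xy)$ explains it). For $|X|=3$, write $X=\{a,b,c\}$ and split on the number of distinct values among $\sigma(ab),\sigma(ac),\sigma(bc)$. If at most two occur, then two of these edges share a color, say $\sigma(ab)=\sigma(ac)$, so $\{b,c\}$ is a non-trivial module and $\SM$ is not primitive; since $\SM$ is then $k$-series and, having only three vertices, both $P_4$-free and rainbow-triangle-free, Theorem~\ref{thm:char-treeEx} gives a tree explaining it. If all three values are distinct, i.e.\ $\SM$ is a rainbow-triangle, then any two-element set $\{y,z\}$, viewed as a putative module, would force $\sigma(xy)=\sigma(xz)$ for the remaining vertex $x$, which is impossible; hence $\SM$ has only trivial modules and is primitive, and since $|X|=3$ it is truly-primitive. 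By Theorem~\ref{thm:char-treeEx} it is not tree-explainable. Combining with the one- and two-vertex cases yields that the rainbow-triangle is the smallest strudigram not explained by a labeled tree and the only primitive strudigram on three vertices.

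For the last assertion I would write the galled-tree down explicitly. Set $\sigma(ab)=\alpha$, $\sigma(ac)=\beta$, $\sigma(bc)=\gamma$ with $\alpha,\beta,\gamma$ pairwise distinct, and let $N$ have inner vertices $\rho,u,w,\eta$ and leaves $a,b,c$, with edge set $\{(\rho,u),(\rho,w),(u,\eta),(w,\eta),(\eta,a),(u,b),(w,c)\}$ and labels $t(\rho)=\gamma$, $t(u)=\alpha$, $t(w)=\beta$, $t(\eta)=\gamma$. Its only non-trivial biconnected component is the gall $C$ with sides $P^1=\rho u\eta$ and $P^2=\rho w\eta$, and $N$ is phylogenetic, so $N$ is a galled-tree; Lemma~\ref{lem:lev1-klca} then supplies the $\2lca$-property, so $\mathcal S(N,t)$ is defined. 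A short ancestor computation gives $\lca_N(a,b)=u$, $\lca_N(a,c)=w$ and $\lca_N(b,c)=\rho$, hence $t(\lca_N(a,b))=\alpha$, $t(\lca_N(a,c))=\beta$, $t(\lca_N(b,c))=\gamma$; since the image of $t$ is exactly $\{\alpha,\beta,\gamma\}$ we conclude $\mathcal S(N,t)=\SM$, so $\SM$ is \gatex. It then remains to read off the three adjectives from the definitions: $N$ is elementary because $C$ is its only gall, every vertex lies in $C$ or is a leaf-child of a vertex of $C$, $\rho_C$ has no leaf-children, and each of $u,w,\eta$ has exactly one leaf-child of in-degree one; $C$ is strong because $\widetilde V(P^1)=\{u\}$ and $\widetilde V(P^2)=\{w\}$ are non-empty and, both of size one, satisfy $|\{t(u),t(w),t(\rho_C)\}|=|\{\alpha,\beta,\gamma\}|=3$; and $(N,t)$ is quasi-discriminating (in fact discriminating) since the only edges whose head is an inner tree-vertex are $(\rho,u)$ and $(\rho,w)$, with $t(\rho)=\gamma\notin\{\alpha,\beta\}=\{t(u),t(w)\}$.

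I do not anticipate a real obstacle: this is a warm-up observation. The two places that want care are making the explicit $N$ satisfy all of Definition~\ref{def:elementary} --- in particular the ``exactly one leaf-child of in-degree one'' clause, which is why the hybrid $\eta$ is equipped with its own leaf-child $a$ --- and choosing $t(\eta)\in\{\alpha,\beta,\gamma\}$ so that the image of $t$ coincides with $\Upsilon$ rather than introducing a spurious fourth symbol; the choice $t(\eta)=\gamma$ additionally makes $(N,t)$ discriminating, but quasi-discriminating is all the statement asks for. A non-leaf-separated variant, taking $\eta$ itself to be the leaf $a$, works just as well.
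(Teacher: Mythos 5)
Your proposal is correct and follows essentially the same route as the paper, which derives the observation from Theorem~\ref{thm:unp<=>treeExpl} for the small cases, the (unwritten) easy check that only the rainbow-triangle is primitive on three vertices, and the explicit galled-tree $\N(v,\TM_1,\TM_2)$ of Definition~\ref{def:N(pc)} shown in Fig.~\ref{fig:rainbowT}; your explicit network is exactly that construction (up to relabeling), and your verification of the strong, elementary and quasi-discriminating properties matches the paper's Observations~\ref{obs:okDef1} and~\ref{obs:okDef2}. The only difference is that you write out the case analysis and the label/lca checks that the paper delegates to a figure and to ``an easy task to verify''.
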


	\begin{figure}
		\centering
		\includegraphics[width=0.9\textwidth]{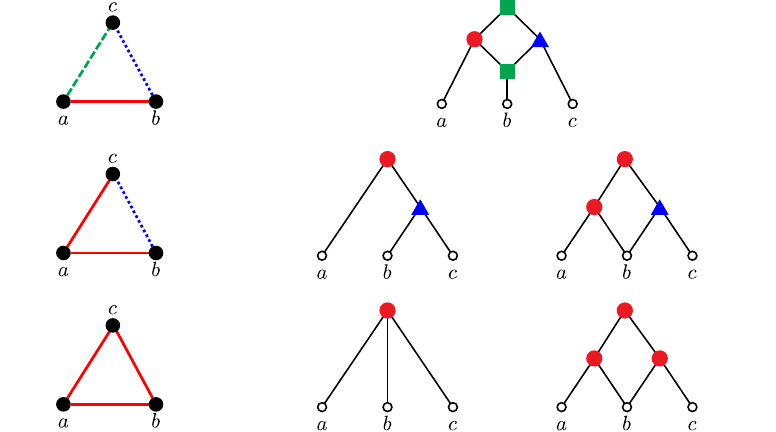}
		\caption{Left, all strudigrams on three vertices (up to choice and permutation of colors) are shown, with color legend as in Fig.~\ref{fig:exmpl-sketch1}. 
					Galled-trees that explain $\SM$ are drawn in the same row as each individual strudigram $\SM$.		Only the rainbow-triangle $\TM$ (top-left)
					cannot be explained by a labeled tree, since it is primitive. Nevertheless it is explained by 
					a strong, elementary and quasi-discriminating galled-tree. In particular, the depicted galled-tree is precisely $(\N(b,\TM_1,\TM_2),t(b,\TM_1,\TM_2))$ for $\TM_1=\TM[\{a,b\}]$ resp. $\TM_2=\TM[\{b,c\}]$, as specified in Def.~\ref{def:N(pc)}.
					Each of the other two labeled galled-trees in the middle and bottom row that contain a gall are elementary but neither strong nor quasi-discriminating.
		}
		\label{fig:rainbowT}
		\end{figure}

\begin{lemma}\label{lem:morerainbowtriangle}
Every polar-cat has at least three vertices. 
Moreover, any rainbow-triangle $\SM = (\{x_1,x_2,x_3\}, \Upsilon, \sigma)$
is a  $(x_i,\SM[\{x_i,x_j\}],\SM[\{x_i,x_l\}])$-polar-cat, $\{i,j,l\} = \{1,2,3\}$. 
\end{lemma}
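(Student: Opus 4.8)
The plan is to verify both assertions essentially by unwinding the definitions, using that every strudigram on at most two vertices is $k$-series and that the rainbow-triangle is the unique primitive strudigram on three vertices (Observation~\ref{obs:rainbowtriangle}). For the first claim, suppose $\SM$ were a polar-cat with $|V(\SM)| \leq 2$. By Definition~\ref{def:polarcat}(i), $\SM - x$ is a $k$-join of two induced strudigrams, so $|V(\SM - x)| \geq 2$ and hence $|V(\SM)| \geq 3$, a contradiction. (One should also note that $\SM_1, \SM_2$ being induced strudigrams of $\SM$ that are both caterpillar-explainable with $x$ in the cherry forces $|V(\SM_i)| \geq 2$, which is already enough.) So every polar-cat has at least three vertices.

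For the second claim, fix the rainbow-triangle $\SM = (\{x_1,x_2,x_3\},\Upsilon,\sigma)$ with $\sigma(x_1x_2), \sigma(x_1x_3), \sigma(x_2x_3)$ pairwise distinct, and fix $\{i,j,l\} = \{1,2,3\}$. I would verify the three defining conditions for $\SM$ to be a $\join_k$-$(x_i, \SM[\{x_i,x_j\}], \SM[\{x_i,x_l\}])$-polar-cat with the choice $k \coloneqq \sigma(x_jx_l)$. First, $\SM - x_i = \SM[\{x_j,x_l\}]$ is the single-edge strudigram with label $\sigma(x_jx_l) = k$; since $\SM[\{x_i,x_j\}] - x_i = \SM[\{x_j\}]$ and $\SM[\{x_i,x_l\}] - x_i = \SM[\{x_l\}]$ are the two singletons, we indeed have $\SM - x_i = (\SM[\{x_i,x_j\}] - x_i) \join_k (\SM[\{x_i,x_l\}] - x_i)$, establishing condition~(i). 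Second, for condition~(ii), each $\SM_m \coloneqq \SM[\{x_i,x_m\}]$ (for $m \in \{j,l\}$) is a two-vertex strudigram, which is explained by the labeled caterpillar tree $(T_m, t_m)$ consisting of a single inner vertex (the root, a cherry) with the two leaves $x_i$ and $x_m$ and $t_m(\rho_{T_m}) = \sigma(x_ix_m)$; this tree is trivially discriminating, $x_i$ is part of its cherry, so (a) holds. For (b), we need $t_m(\rho_{T_m}) = \sigma(x_ix_m) \neq k = \sigma(x_jx_l)$: this is exactly the rainbow condition that all three edge-labels are pairwise distinct, since $\{x_i,x_m\} \neq \{x_j,x_l\}$ whenever $m \in \{j,l\}$. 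Hence $\SM$ is a $\join_k$-$(x_i, \SM[\{x_i,x_j\}], \SM[\{x_i,x_l\}])$-polar-cat.

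The argument is entirely routine; the only mild subtlety — which I would state carefully rather than gloss over — is the bookkeeping in condition~(i), namely checking that deleting $x_i$ from the two-vertex pieces $\SM[\{x_i,x_j\}]$ and $\SM[\{x_i,x_l\}]$ yields precisely the singletons $\{x_j\}$ and $\{x_l\}$ whose $k$-join with $k = \sigma(x_jx_l)$ reconstructs the edge $\SM - x_i$. There is no real obstacle here, so no further machinery is needed.
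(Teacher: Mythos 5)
Your proposal is correct and follows essentially the same route as the paper: the lower bound on the number of vertices comes from Definition~\ref{def:polarcat}(i)--(ii) exactly as in the paper's argument, and the rainbow-triangle claim is settled by directly verifying the definition with $k=\sigma(x_jx_l)$ and the two single-cherry caterpillars. In fact your explicit check of conditions (i) and (ii) is more complete than the paper's proof, which for the second claim merely points to the generic example in Fig.~\ref{fig:rainbowT}.
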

\begin{proof}
	Let $\SM=(X,\Upsilon,\sigma)$ be a $\join_k$-$(x, \SM_1, \SM_2)$-polar-cat. 
	By Def.~\ref{def:polarcat}(ii), each of $\SM_1$ and $\SM_2$ is explained by a labeled caterpillar tree 
	such that $x$ is part of the cherry in each of the respective  trees. Hence, each of $\SM_1$ and $\SM_2$ contains
	at least two vertices and both contain $x$. This together with Def.~\ref{def:polarcat}(i)
	implies that $\SM$ has at least three vertices. 
	A generic example, that shows that a rainbow-triangle $\SM = (\{x_1,x_2,x_3\}, \Upsilon, \sigma)$ 
	is a  $(x_i,\SM[\{x_i,x_j\}],\SM[\{x_i,x_l\}])$-polar-cat, $\{i,j,l\} = \{1,2,3\}$ is provided
	in Fig.\ \ref{fig:rainbowT}(left) where we can put $\{x_1,x_2,x_3\} = \{a,b,c\}$. 
\end{proof}

It is easy to see that strudigrams $\SM=(X,\Upsilon,\sigma)$ with either $|X|\leq 2$ or
$|X|=3$ and where the three pairs of vertices in $X$ receive the same color in $\Upsilon$, can never satisfy Def.\ \ref{def:polarcat}(ii)
and are, therefore, not polar-cats. Nevertheless, if  $|X|=3$ and we have 2 colors in $\Upsilon$ distributed among
the three pairs of vertices in $X$ we obtain a polar-cat: simply choose $x\in X$ as the vertex that satisfies
$\sigma(xv)=\sigma(xu)$ for $u,v\in X\setminus\{x\}$. 
Thus, all other polar-cats are either rainbow-triangles or have at least four vertices and we call
them \emph{handsome}.

\begin{lemma}\label{lem:seqdGaT=>pc}
	If $\SM$ is a strudigram that is explained by a strong, elementary and quasi-discriminating galled-tree, 
	then $\SM$ is a handsome polar-cat. 
\end{lemma}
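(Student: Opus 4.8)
The plan is to take a strong, elementary, quasi-discriminating galled-tree $(N,t)$ explaining $\SM$ and directly extract from it the data required by Definition~\ref{def:polarcat}. Since $N$ is elementary, it has a single gall $C$ with root $\rho_C = \rho_N$, hybrid $\eta_C$, and two sides $P^1, P^2$; by condition \ref{def:strong1} each side has at least one internal vertex. The natural candidate for the distinguished vertex $x$ is the (unique) leaf associated with the hybrid $\eta_C$: either $\eta_C$ itself is a leaf, or $\eta_C$ has a single leaf-child. First I would analyze $\SM - x$: deleting $x$ from $N$ and suppressing degree-two vertices destroys the gall (the two sides become one path, or rather the hybrid disappears), leaving a tree whose structure is a caterpillar-like object. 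The key claim is that $\SM - x$ splits as a $k$-join, where $k = t(\rho_C)$, with the two parts being exactly the leaf-sets hanging off $P^1$ and off $P^2$ (minus $\rho_C$'s contributions). This is because for a leaf $u$ below side $P^1$ and a leaf $w$ below side $P^2$, the unique $\lca_N(u,w)$ is forced to be $\rho_C$ (any common ancestor must be at or above the point where the two sides meet), so $\sigma(uw) = t(\rho_C) = k$.

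Next I would define $\SM_i$ for $i \in \{1,2\}$ as the substrudigram induced by $x$ together with all leaves attached along side $P^i$ (including those below $\rho_C$ on that side, if any, and the hybrid leaf $x$). The subnetwork of $N$ relevant to $\SM_i$ is obtained by keeping side $P^i$ and $x$ and deleting the other side; since $P^i$ together with $x$ and the leaf-children along it forms a path ending in a cherry containing $x$ (here I use condition (iii) of Definition~\ref{def:elementary}: every inner vertex other than $\rho_C$ has exactly one leaf-child), this is precisely a caterpillar tree $(T_i, t_i)$ with $x$ in the cherry, giving Def.~\ref{def:polarcat}(ii)(a). For the discriminating property I would invoke that $(N,t)$ is quasi-discriminating: along the tree-part the labels of adjacent inner vertices differ, and this is inherited by $T_i$ — the one place needing care is the cherry vertex and, when a side has exactly one internal vertex, the interaction with $\rho_C$, which is exactly where strong condition \ref{def:strong2} guarantees the three labels $t(v_1), t(v_2), t(\rho_C)$ are pairwise distinct, so that after suppression nothing collapses to a non-discriminating edge. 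Condition (ii)(b), namely $t_i(\rho_{T_i}) \neq k$, follows because $\rho_{T_i}$ is the topmost internal vertex of side $P^i$ (a child of, or equal to a vertex adjacent to, $\rho_C$), and quasi-discriminating plus strongness force its label to differ from $t(\rho_C) = k$; again the single-internal-vertex case is handled by \ref{def:strong2}.

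That $\SM_i$ really is \emph{explained} by $(T_i, t_i)$ — i.e.\ that $\sigma(uv) = t_i(\lca_{T_i}(u,v))$ for $u,v$ in $\SM_i$ — needs the observation that for two leaves both on side $P^i$ (or one of them $x$), their $\lca$ in $N$ already lies on $P^i$ and is unaffected by deleting the other side and suppressing degree-two vertices; the suppression can only merge a side-vertex with $\rho_C$ when that side has one internal vertex, and strongness ensures the surviving label is the correct one. Finally, handsomeness: by Lemma~\ref{lem:morerainbowtriangle} every polar-cat has at least three vertices, and a polar-cat on exactly three vertices with all pairs the same color is impossible (Def.~\ref{def:polarcat}(ii) fails), so I must rule out the only remaining non-handsome case, $|X| = 3$ with a non-rainbow coloring — but such a $\SM$ has a non-trivial module and hence its MDT has no prime vertex, so it is tree-explainable, contradicting that $(N,t)$ is a genuine galled-tree with a gall (or, if $N$ happened to be a tree, then $\SM$ is not a polar-cat at all by Lemma~\ref{lem:morerainbowtriangle}'s setup — so in the relevant case $N$ does contain the gall $C$, forcing $|X| \geq 3$ and, when $|X| = 3$, the rainbow-triangle via Observation~\ref{obs:rainbowtriangle}).

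\textbf{Main obstacle.} The delicate point is the bookkeeping around suppression of degree-two vertices when a gall side has exactly one internal vertex: one must verify that the resulting caterpillar $(T_i,t_i)$ is still discriminating and still explains $\SM_i$, and this is precisely the scenario that strong condition \ref{def:strong2} is designed to control. Getting the label bookkeeping right in that edge case — ensuring no two adjacent inner labels coincide after suppression and that $\lca$-labels are preserved — is where the real work lies; the rest is essentially unwinding definitions of elementary and quasi-discriminating galled-trees.
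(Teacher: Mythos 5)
Your proposal follows essentially the same route as the paper's proof: take the hybrid's leaf as the distinguished vertex $v$, observe that leaves hanging off different sides of the unique gall have $\lca$ equal to $\rho_C$ so that $\SM-v$ is the $k$-join of the two side-strudigrams with $k=t(\rho_C)$, realize each $\SM_i$ by the caterpillar $T_i=N(u_i)$ where $u_i$ is the child of $\rho_C$ on side $P^i$, and settle handsomeness via the three-vertex case. Two issues are worth flagging.

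First, a genuine flaw in the handsomeness step. You rule out the non-handsome case ($|X|=3$, non-rainbow) by arguing that such an $\SM$ is tree-explainable, ``contradicting that $(N,t)$ is a genuine galled-tree with a gall.'' This is not a contradiction: a tree-explainable strudigram can perfectly well also be explained by a galled-tree containing a gall --- the middle and bottom rows of Figure~\ref{fig:rainbowT} exhibit exactly such non-rainbow three-vertex strudigrams explained by elementary galled-trees with a gall (those galled-trees are just not strong and not quasi-discriminating). The correct argument, which is the one the paper uses and for which you already have all the ingredients, is direct: when $|X|=3$, elementarity together with \ref{def:strong1} forces $\widetilde V(P^i)=\{v_i\}$ with a single leaf-child $x_i$ for $i=1,2$ and a third leaf $x_3$ below $\eta_C$; then $\sigma(x_1x_2)=t(\rho_C)$ and $\sigma(x_ix_3)=t(v_i)$, and \ref{def:strong2} makes these three labels pairwise distinct, so $\SM$ is a rainbow triangle and hence handsome.

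Second, a misattribution of where the hypotheses are used (not fatal, but it shows the bookkeeping is off). The ``delicate'' suppression scenario you describe --- a side-vertex collapsing onto $\rho_C$ when a side has one internal vertex --- does not occur: $T_i=N(u_i)$ excludes $\rho_C$ entirely, and the only vertex that can be suppressed in forming $N(u_i)$ is $\eta_C$ (when it is not already a leaf). Condition \ref{def:strong2} is \emph{not} needed for the discriminating property of $T_i$ nor for $t_i(\rho_{T_i})\neq k$; both follow from quasi-discrimination once \ref{def:strong1} guarantees that $u_i$ is an internal vertex of $P^i$ distinct from $\eta_C$ and hence an inner tree-vertex of $N$ (so the edge $(\rho_C,u_i)$ and all edges along $P^i$ above $\eta_C$ fall under the quasi-discriminating condition). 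The sole role of \ref{def:strong2} in this lemma is the three-vertex rainbow conclusion above.
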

\begin{proof}
    Let $(N,t)$ be a strong, elementary and quasi-discriminating galled-tree and let
    $\SM=(X,\Upsilon, \sigma)$ be the strudigram that is explained by $(N,t)$.  Since $N$ is
    elementary, it contains a unique gall $C$. Let $P^1$ and $P^2$ denote the sides of $C$. Recall
    $P^1$ and $P^2$ meet only in the root $\rho$ and the (unique) hybrid vertex $\eta$ of $N$. Let
    $W_i$ be the set of the leaf-children of the vertices in $P^i$ and put $\SM_i=\SM[W_i]$, for
    $i\in\{1,2\}$. Furthermore, to ease notation, we let $v$ denote the child of $\eta$ if such
    vertex exist; otherwise $v=\eta$. Thus, $v$ is always the leaf of $N$ that satisfies $v\preceq_N
    \eta$. Clearly, $W_1\cap W_2=\{v\}$. Finally, put $k\coloneqq t(\rho)$.
    
    It is an easy task to verify that $\lca(x,y)=\rho_N$ for all $x\in   W_1\setminus\{v\}$ and $y\in W_2\setminus\{v\}$. 
    Thus, $t(\lca(x,y))=t(\rho)=k$ for all $x\in  W_1\setminus\{v\}$ and $y\in W_2\setminus\{v\}$. 
    Since $\SM$ is explained by $(N,t)$, it holds that $\sigma(xy)=k$ for all $x\in  W_1\setminus\{v\}$ and $y\in W_2\setminus\{v\}$. 
    Hence, $\SM-v = (\SM_1-v)\join_{k} (\SM_2-v)$ and $\SM_1$ and $\SM_2$ 
    satisfy  Definition~\ref{def:polarcat}(i).  
   
    We now construct a caterpillar tree that explains $\SM_1$. Suppose $u$ is the child of $\rho$ that lies on $P^1$, and let $(T_1, t_1)$ be
	be the labeled subnetwork where $T_1 = N(u)$ and where $t_1$ is the restriction of $t$ to $T_1$. Note that $u \neq \eta$ since $N$ is strong.
	Since $N$ is phylogenetic, elementary and strong, 
	only $\eta$ may have been suppressed while constructing $N(u)$ (which is the case if $\eta$ is not already a leaf in $N$).
	One easily verifies that  $T_1$ is a caterpillar tree. Moreover, if we let $u'$ denote the $\preceq_N$-minimal vertex
     of $P^1$ that satisfies $\eta\prec_N u'$, then the leaf $v$ must, by construction and since $N$ is phylogenetic, be a child of $u'$ in $T_1$ and hence,
    $v$ is part of the cherry of $T_1$. By construction, the leaf set of $T_1$ is precisely $W_1$
    and one easily verifies that $\lca_{T_1}(x,x') = \lca_{N}(x,x')$
    for all $x,x'\in W_1$. Since $t_1$ is the restriction of $t$ to $T_1$,
    $t_1(\lca_{T_1}(x,x')) = t(\lca_{N}(x,x'))$ holds for all $x,x'\in W_1$. 
    Hence,  $(T_1, t_1)$ explains $\SM_1$. 
    Since $(N,t)$ is quasi-discriminating, it follows that
    the root $\rho_{T_1}$ of $T_1$ (which is the child $u$ of $\rho_N$ in $N$) satisfies
    $t_1(\rho_{T_1})=t(u)\neq t(\rho_N) =k$. Thus,  $\SM_1$ 
    satisfies  Definition~\ref{def:polarcat}(ii).  By analogous argumentation, 
    $\SM_2$ satisfies  Definition~\ref{def:polarcat}(ii).
  
    To summarize, $\SM$ is a polar-cat. To see that $\SM$ is handsome, observe that Lemma \ref{lem:morerainbowtriangle} implies
    $|X|\geq 3$. Assume that $|X|=3$. In this case, 
	\ref{def:strong2} implies that three colors must appear along the three pairs of vertices 
	in $X$ and $\SM$ is, therefore, a rainbow-triangle. In summary, $|X|\geq 3$ implies that 
	$\SM$ is a rainbow-triangle or that $|X|\geq 4$ and therefore, that $\SM$ is handsome.
\end{proof}

Given a galled-tree $(N,t)$, constructing the strudigram $\mathcal{S}(N,t)$ is straightforward. 
The proof of Lemma \ref{lem:seqdGaT=>pc} focuses on showing certain properties of $\mathcal{S}(N,t)$
for a specific type of galled-tree, rather than proving its existence. 
Proving the existence of a galled-tree that explains a given strudigram is
generally more challenging. However, the structured nature of polar-cats allows for the explicit
construction of a galled-tree that explains it. The main idea is to connect the caterpillar trees
that explain the the substrudigrams $\SM_1$ and $\SM_2$ of a
$(v,\SM_1,\SM_2)$-polar-cat with an additional root-vertex and by merging their
respective occurrences of the leaf $v$ under a common hybrid-vertex. This
procedure is formalized as follows (c.f. \cite[Def. 4.14]{HS:22}).

\begin{definition}\label{def:N(pc)}	
    Let $\SM$ be a $\join_k$-$(v,\SM_1,\SM_2)$-polar-cat. Let
    $(T_1,t_1)$, resp., $(T_2,t_2)$ be the discriminating caterpillar tree that
    explains $\SM_1$, resp., $\SM_2$ and where $v$ is a part of the cherry
    in both $T_1$ and $T_2$. We may, without loss of generality, assume that $T_1$ and $T_2$ are
    vertex-disjoint except for the leaf $v$. However, to distinguish the
             occurrences of $v$ in the two trees, we call $v$ in $T_i$ simply
             $v_i$ for $i\in\{1,2\}$.
    Then, the di-graph
    $N\coloneqq\N(v,\SM_1,\SM_2)$ is constructed as follows:\smallskip
    \begin{enumerate}
        \item Let $N'$ be the disjoint union of $T_1$ and $T_2$.
          \item
               Add a new root vertex $\rho_N$ to $N'$ along with edges 
              $(\rho_N,\rho_{T_1})$ and $(\rho_N,\rho_{T_2})$ to obtain $N''$. 
        \item Identify the leaves $v_1$ and $v_2$ of $N''$ into a new hybrid
              vertex denoted $\eta_N$, add a new occurrence of the leaf $v$ and
              the edge $(\eta_N, v)$ which results in the di-graph $N$.  \smallskip
    \end{enumerate}
    The labeling $t\coloneqq t(v,\SM_1,\SM_2)$ of $\N(v,\SM_1,\SM_2)$ is defined, for every inner vertex $u$ of $\N(v,\SM_1,\SM_2)$, by
    \[t(u)\coloneqq
        \begin{cases}
        t_1(u) &\text{if }u\in V^{0}(T_1) \\
        t_2(u) &\text{if }u\in V^{0}(T_2) \\
        k      &\text{if }u\in\{\rho_N,\eta_N\}.
    \end{cases}
    \]
\end{definition}

The label $t(\eta_N)$ may be arbitrarily defined: for definiteness we have chosen $t(\eta_N)=k$
here. We argue now that $(N,t)\coloneqq (\N(v,\SM_1,\SM_2), t(v,\SM_1,\SM_2))$ is an elementary,
quasi-discriminating, and leaf-separated network. One easily verifies that $N$ is a galled-tree consisting of a single
gall rooted at $\rho_N$ and terminating at $\eta_N$, since the trees $T_1$ and $T_2$ used in the
construction of $N\coloneqq \N(v,\SM_1,\SM_2)$ are caterpillars. Note that $\eta_N$ has a single
child, namely, the leaf $v$. Hence, $N$ is leaf-separated.
Furthermore, $\rho_N$ has no leaf-children and all other inner vertices have
precisely one leaf-child. In other words, $N$ elementary. One easily verifies that the labeling
$t=t(v,\SM_1,\SM_2)$ is well-defined. Moreover, since both
$(T_1,t_1)$ and $(T_2,t_2)$ are discriminating and since $t_i(\rho_{T_i})\neq k$, $1\leq i\leq 2$,
and since the only newly introduced inner vertex in $N$ that is distinct from $\rho_N$ is the hybrid
vertex $\eta_N$, it follows that $(N,t)$ is quasi-discriminating, see Fig.~\ref{fig:defNpc} for an
illustrative example. We summarize the latter findings in

\begin{observation}\label{obs:okDef1}
   If $\SM$ is a $(v,\SM_1,\SM_2)$-polar-cat, then the directed graph $\N(v,\SM_1,\SM_2)$ is a
   galled-tree, and $t(v,\SM_1,\SM_2)$ is a well-defined labeling of $\N(v,\SM_1,\SM_2)$. In
   particular, $(\N(v,\SM_1,\SM_2), t(v,\SM_1,\SM_2))$ is elementary, leaf-separated and quasi-discriminating. 
\end{observation}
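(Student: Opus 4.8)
The plan is to verify each claimed property of $(N,t) \coloneqq (\N(v,\SM_1,\SM_2), t(v,\SM_1,\SM_2))$ directly from the construction in Definition~\ref{def:N(pc)}, using that $T_1$ and $T_2$ are discriminating caterpillar trees in which $v$ is part of the cherry. First I would argue that $N$ is a galled-tree. Since each $T_i$ is a caterpillar with inner-vertex path $P_i$ and since $v_i$ is part of the cherry, the leaf $v_i$ hangs from the $\preceq$-minimal inner vertex of $P_i$; after adding the root $\rho_N$ with edges to $\rho_{T_1}, \rho_{T_2}$ and identifying $v_1, v_2$ into $\eta_N$, the two inner-vertex paths $P_1$ and $P_2$ (extended at the top through $\rho_N$ and at the bottom to $\eta_N$) become two internally disjoint directed $\rho_N\eta_N$-paths sharing only $\rho_N$ and $\eta_N$. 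Hence the subgraph on $V(P_1)\cup V(P_2)\cup\{\rho_N,\eta_N\}$ is a single gall $C$, and all leaf-children of vertices of $C$ attach as pendant tree-vertices; so every nontrivial biconnected component of $N$ is a gall. One also checks $N$ is phylogenetic: $\rho_N$ has out-degree~$2$, $\eta_N$ has in-degree~$2$, and every other inner vertex retains out-degree~$\geq 2$ from the caterpillar structure (each $P_i$-vertex has a leaf-child plus its successor on $P_i$, and the bottom vertex of $P_i$ has the cherry). Thus $N$ is a galled-tree.

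Next I would verify that $N$ is elementary, i.e. conditions~(i)--(iii) of Definition~\ref{def:elementary}. Since every vertex of $N$ is either on $P_1\cup P_2$ (hence in $C$), or is $\rho_N$ or $\eta_N$ (in $C$), or is a leaf-child of a $P_i$-vertex, condition~(i) holds. The root $\rho_N$ was added fresh with only the two edges $(\rho_N,\rho_{T_1})$, $(\rho_N,\rho_{T_2})$ to inner vertices, so it has no leaf-children; since $\eta_N$ is not a child of $\rho_N$ (it lies at the bottom of both sides), condition~(ii) holds. For condition~(iii): each inner vertex $w\neq\rho_N$ is either a vertex of some $P_i$, which in the caterpillar $T_i$ has exactly one leaf-child of in-degree~$1$ (this is unaffected by the construction, since the only identification is at $v_1,v_2$ which become the non-leaf $\eta_N$), or $w=\eta_N$, whose unique child is the newly added leaf $v$ with $\indeg_N(v)=1$. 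So $N$ is elementary, and $\eta_N$ is a leaf-child-free inner vertex whose unique child $v$ is a leaf, whence $N$ is leaf-separated.

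It remains to check that $t$ is well-defined and that $(N,t)$ is quasi-discriminating. Well-definedness is routine: the three cases in the definition of $t$ partition $V^0(N) = V^0(T_1)\,\dot\cup\, V^0(T_2)\,\dot\cup\,\{\rho_N,\eta_N\}$ (the trees are internally disjoint, and $\rho_N,\eta_N$ are new vertices not in either $V^0(T_i)$), so $t$ assigns exactly one label to each inner vertex. For quasi-discriminating, we must show $t(u)\neq t(w)$ whenever $(u,w)\in E(N)$ and $w$ is an inner tree-vertex. The only inner tree-vertices are the vertices of $P_1$ and $P_2$ (all other inner vertices, namely $\rho_N$ and $\eta_N$, are the root and the hybrid). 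An edge $(u,w)$ into such a $w$ is either an edge internal to some $T_i$ — in which case $t(u)=t_i(u)\neq t_i(w)=t(w)$ because $(T_i,t_i)$ is discriminating — or one of the two edges $(\rho_N,\rho_{T_i})$, in which case $t(u)=t(\rho_N)=k$ while $t(w)=t_i(\rho_{T_i})\neq k$ by Definition~\ref{def:polarcat}(ii)(b). No edge into $\eta_N$ needs to be checked since $\eta_N$ is a hybrid, not a tree-vertex. Hence $(N,t)$ is quasi-discriminating.

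I do not expect a serious obstacle here; the statement is essentially a bookkeeping consolidation of the remarks already made informally before the observation. The one point requiring slight care is confirming that, because $v$ is \emph{part of the cherry} of each $T_i$, the two $v_i$ are attached so that after identification the resulting hybrid $\eta_N$ sits at the common bottom of $P_1$ and $P_2$ (rather than somewhere that would break the "single gall" structure) — this is exactly where the caterpillar hypothesis is used, and it is the crux of the galled-tree verification.
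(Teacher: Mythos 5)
Your proposal is correct and takes essentially the same route as the paper, which justifies this observation in the (informal) paragraph immediately preceding it: a direct case-by-case verification from Definition~\ref{def:N(pc)} that the two caterpillar spines together with $\rho_N$ and $\eta_N$ form the unique gall, that the elementary and leaf-separated conditions hold, and that quasi-discrimination follows from the $T_i$ being discriminating together with $t_i(\rho_{T_i})\neq k$. You simply spell out the details the paper leaves as ``one easily verifies''; no gap.
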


\begin{figure}
	\centering
	\includegraphics[width=0.8\textwidth]{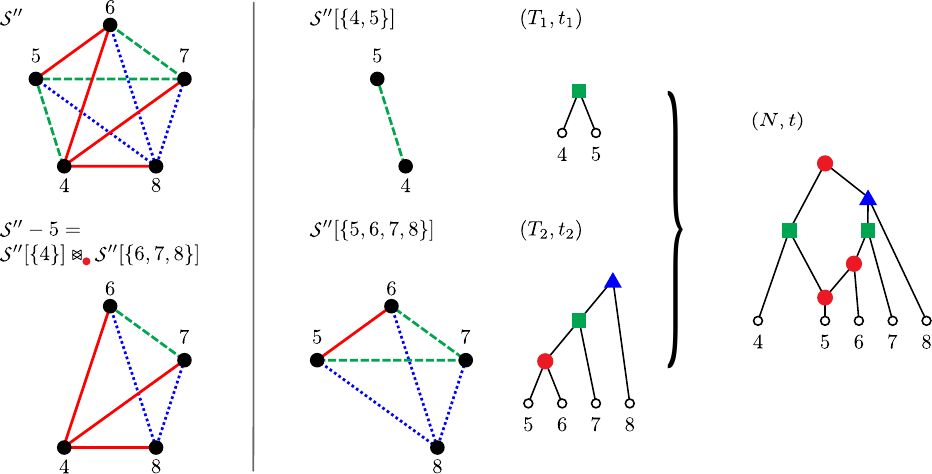}
	\caption{Top-left, a strudigram $\SM''$ is shown,  see Fig.~\ref{fig:exmpl-sketch1} for the color legend. 
					A network $(N'',t'')$ that explains $\SM''$ is provided already in Fig.~\ref{fig:exmapl1-pvr}. 
					Put $\SM''_1 \coloneqq \SM''[{4,5}]$, $\SM''_2 \coloneqq \SM''[{5,6,7,8}]$ and $v\coloneqq 5$.
					The strudigram  $\SM''-v$ is a $k$-series strudigram as $\SM''-v = (\SM''_1-v) \protect\join_k (\SM''_2-v)$  
					where  $k$ refers to the edge-color
					solid-red in the drawing. Hence, $\SM''$ satisfies Def.~\ref{def:polarcat}(i).
					The substrudigrams $\SM''_1$ and $\SM''_2$ can be explained by the
					discriminating caterpillar $(T_1,t_1)$ and $(T_2,t_2)$, respectively. In addition, 
					the vertex $v=5$ is a cherry in both $T_1$ and $T_2$ and,
					$t_1(\rho_{T_1})\neq k$ and $t_2(\rho_{T_2})\neq k$. Therefore, $\SM''$ satisfies Def.~\ref{def:polarcat}(ii). 
					In summary, $\SM''$ is a $(v,\SM''_1,\SM''_2)$-polar-cat. Even more, $\SM''$ is handsome. 
					The network $(N,t)$ is the network $(\N(v,\SM''_1,\SM''_2), t(v,\SM''_1,\SM''_2))$ as specified in 
					Def.\  \ref{def:N(pc)}. This network $(N,t)$ explains $\SM''$ and thus serves as a much simpler alternative
				   explanation of $\SM''$ compared to $(N'',t'')$ shown in Fig.~\ref{fig:exmapl1-pvr}.
	}			
	\label{fig:defNpc}
\end{figure}

Let us now investigate the structure of the unique gall $C$ in $(N,t)\coloneqq (\N(v,\SM_1,\SM_2),
t(v,\SM_1,\SM_2))$. Recall that both $\SM_1$ and $\SM_2$ have at least two vertices each, i.e., the
corresponding trees $T_1$ and $T_2$ have both at least two leaves. Hence, each side of the gall $C$
of must have at least one vertex that is distinct from $\rho_C$ and $\eta_C$ and, therefore, always
satisfies \ref{def:strong1}. In case the polar-cat has at least four vertices then we can conclude
that at least one of $\SM_1$ and $\SM_2$ has at least three vertices, implying that at least one of
$T_1$ and $T_2$ has at least two inner vertices. In this case, $C$ trivially satisfies
\ref{def:strong2} and is, therefore, strong. If a polar-cat $\SM$ is a rainbow-triangle, then
it easily seen that $(N,t)$ must contain at least three distinct labels, namely $|\{t(\rho_N),t(\rho_{T_1}),t(\rho_{T_2})\}|=3$, so that $C$ again satisfies \ref{def:strong2}. See also Figure~\ref{fig:rainbowT}.
The latter discussion together with Obs.\ \ref{obs:okDef1} implies
\begin{observation}\label{obs:okDef2}
   If $\SM$ is a handsome $(v,\SM_1,\SM_2)$-polar-cat, then $(\N(v,\SM_1,\SM_2), t(v,\SM_1,\SM_2))$ 
   is a strong, elementary and quasi-discriminating labeled galled-tree.
\end{observation}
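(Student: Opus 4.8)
The plan is to build on Observation~\ref{obs:okDef1}, which already guarantees that $(N,t)\coloneqq(\N(v,\SM_1,\SM_2),t(v,\SM_1,\SM_2))$ is an elementary, leaf-separated, quasi-discriminating labeled galled-tree. Hence the only thing left to establish is that its unique gall $C$ (with root $\rho_C=\rho_N$ and hybrid $\eta_C=\eta_N$) is \emph{strong}, i.e.\ satisfies \ref{def:strong1} and \ref{def:strong2}. The first step is therefore to pin down the combinatorial shape of $C$ in terms of the caterpillars $T_1,T_2$ used in Def.~\ref{def:N(pc)}: the side $P^i$ of $C$ runs from $\rho_N$ through $\rho_{T_i}$ along the inner-vertex path of $T_i$ and then to $\eta_N$ (the latter being the identified copy of the cherry-leaf $v_i$). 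Consequently $\widetilde V(P^i)$ is exactly the set of inner vertices of $T_i$.

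With this identification, \ref{def:strong1} is immediate: by Def.~\ref{def:polarcat}(ii), $v$ lies in the cherry of each $T_i$, so each $\SM_i$ has at least two vertices, each $T_i$ has at least one inner vertex, and thus $\widetilde V(P^i)\neq\emptyset$ for $i\in\{1,2\}$.

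For \ref{def:strong2}, suppose $|\widetilde V(P^1)|=|\widetilde V(P^2)|=1$, say $\widetilde V(P^i)=\{w_i\}$. Then each $T_i$ has a single inner vertex, hence exactly two leaves, so each $\SM_i$ has exactly two vertices; combined with $\SM-v=(\SM_1-v)\join_k(\SM_2-v)$ from Def.~\ref{def:polarcat}(i) this forces $|V(\SM)|=3$. Since $\SM$ is handsome, it must then be a rainbow-triangle, say on $\{v,a,b\}$ with $\SM_1=\SM[\{v,a\}]$ and $\SM_2=\SM[\{v,b\}]$. Here $w_i=\rho_{T_i}$, so by the definition of the labeling in Def.~\ref{def:N(pc)} we get $t(w_1)=t_1(\rho_{T_1})=\sigma(va)$ and $t(w_2)=t_2(\rho_{T_2})=\sigma(vb)$, while $t(\rho_C)=k=\sigma(ab)$ (the last equality because Def.~\ref{def:polarcat}(i) gives $\SM[\{a,b\}]=\SM[\{a\}]\join_k\SM[\{b\}]$). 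As the three colors $\sigma(va),\sigma(vb),\sigma(ab)$ of a rainbow-triangle are pairwise distinct, $|\{t(w_1),t(w_2),t(\rho_C)\}|=3$, so \ref{def:strong2} holds. If instead $|V(\SM)|\geq 4$, then $|V(\SM_1)|+|V(\SM_2)|\geq 5$, so some $T_i$ has at least two inner vertices and $|\widetilde V(P^i)|\geq 2$; hence the hypothesis of \ref{def:strong2} fails and the condition holds vacuously. In all cases $C$ is strong, so $N$ is a strong galled-tree.

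I do not expect a genuine obstacle here: the argument is a short case split layered on top of the explicit construction in Def.~\ref{def:N(pc)}. The only point requiring a little care is the three-vertex case, where one must correctly match the caterpillar labels $t_i(\rho_{T_i})$ with the pairwise-distinct colors $\sigma(va),\sigma(vb)$ and $\sigma(ab)=k$ of the rainbow-triangle; everything else is bookkeeping on the two sides of the gall.
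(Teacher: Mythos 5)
Your proposal is correct and follows essentially the same route as the paper: invoke Observation~\ref{obs:okDef1} for the elementary, leaf-separated and quasi-discriminating properties, identify $\widetilde V(P^i)$ with the inner vertices of $T_i$ to get \ref{def:strong1} from $|V(\SM_i)|\geq 2$, and split \ref{def:strong2} into the $|X|\geq 4$ case (vacuous) and the rainbow-triangle case (three distinct labels $t(\rho_N),t(\rho_{T_1}),t(\rho_{T_2})$). Your write-up merely makes explicit the label computation $t(\rho_{T_1})=\sigma(va)$, $t(\rho_{T_2})=\sigma(vb)$, $t(\rho_N)=k=\sigma(ab)$ that the paper dismisses as "easily seen", which is a fine addition but not a different argument.
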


The attentive reader may have noted that we could have simply identified the two occurrences of $v$
in Definition~\ref{def:N(pc)} without the additional leaf-child of $\eta_N$ and still obtained an elementary and
quasi-discriminating galled-tree. However, it would not be leaf-separated, a property that will
simplify some of the upcoming proofs.

\begin{lemma}\label{lem:pc=>gatex}
    Every $(v, \SM_1,\SM_2)$-polar-cat is explained by the labeled galled-tree
    $(\N(v,\SM_1,\SM_2), t(v,\SM_1,\SM_2))$.
\end{lemma}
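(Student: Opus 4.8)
The plan is to verify directly that $(N,t)\coloneqq(\N(v,\SM_1,\SM_2),t(v,\SM_1,\SM_2))$ satisfies Definition~\ref{def:strudi-explain}: that $N$ has the $\2lca$-property, that $L(N)=V(\SM)$, and that $t(\lca_N(x,y))=\sigma(xy)$ for all distinct $x,y\in V(\SM)$. The first two points cost little. By Observation~\ref{obs:okDef1} the digraph $N$ is a galled-tree, hence a global lca-network (Lemma~\ref{lem:lev1-klca}) and in particular has the $\2lca$-property, so $\mathcal{S}(N,t)$ is defined. For the leaf set: Definition~\ref{def:polarcat}(i) uses $\join_k$, which is defined only on vertex-disjoint strudigrams, so $V(\SM_1)\setminus\{v\}$ and $V(\SM_2)\setminus\{v\}$ are disjoint with union $X\setminus\{v\}$; since $v$ is part of the cherry of each $T_i$ it lies in $V(\SM_i)=L(T_i)$, so $\{V(\SM_1)\setminus\{v\},V(\SM_2)\setminus\{v\},\{v\}\}$ is a partition of $X$, and reading off Definition~\ref{def:N(pc)} gives $L(N)=X$. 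So the real content is the lca-identity.

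For that I would first record the ancestor structure of $N$. By construction $N$ is built from the disjoint caterpillars $T_1,T_2$ by relabelling the cherry-leaf $v_i$ of $T_i$ to a common new hybrid vertex $\eta_N$ (thereby identifying $v_1$ with $v_2$), attaching the new leaf $v$ below $\eta_N$ via $(\eta_N,v)$, and adding a fresh root $\rho_N$ above $\rho_{T_1}$ and $\rho_{T_2}$. Thus each $T_i$ sits inside $N$ as a subgraph with $v_i$ now equal to $\eta_N$, the two copies share only $\eta_N$, the unique gall of $N$ has the inner spines of $T_1$ and $T_2$ as its two sides, and every leaf $x\neq v$ of $N$ belongs to exactly one $L(T_i)=V(\SM_i)$. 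From this one reads off: the $N$-ancestors of such a leaf $x$ are $\{\rho_N\}$ together with the $T_i$-ancestors of $x$; the $N$-ancestors of $v$ are $\{\rho_N,\eta_N,v\}$ together with the proper $T_1$-ancestors of $v_1$ and the proper $T_2$-ancestors of $v_2$; and $\eta_N$ is an ancestor of no leaf of $N$ other than $v$, since $v_i$ is a leaf of $T_i$.

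Then I would take distinct $x,y\in X$ and split into two cases. In \emph{Case 1}, $x,y$ both lie in $V(\SM_1)$ (the case of $V(\SM_2)$ being symmetric). Write $x^{\flat}=x$ if $x\neq v$ and $x^{\flat}=v_1$ if $x=v$, and likewise $y^{\flat}$. Intersecting the ancestor sets above, the common $N$-ancestors of $x$ and $y$ are exactly $\{\rho_N\}$ together with the common $T_1$-ancestors of $x^{\flat}$ and $y^{\flat}$; the latter form a chain (as $T_1$ is a tree) whose top $\rho_{T_1}$ satisfies $\rho_{T_1}\prec_N\rho_N$, and $\eta_N$ never slips in as a common ancestor. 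Hence $\lca_N(x,y)=\lca_{T_1}(x^{\flat},y^{\flat})$, a proper inner vertex of $T_1$, so $t(\lca_N(x,y))=t_1(\lca_{T_1}(x^{\flat},y^{\flat}))$; since $(T_1,t_1)$ explains $\SM_1$, since $v_1$ plays the role of $v$ in $\SM_1$, and since $\SM_1$ is an \emph{induced} substrudigram of $\SM$, this equals $\sigma(xy)$. In \emph{Case 2}, $x\in V(\SM_1)\setminus\{v\}$ and $y\in V(\SM_2)\setminus\{v\}$. Then the $N$-ancestors of $x$ lie in $\{\rho_N\}\cup V(T_1)$ and those of $y$ in $\{\rho_N\}\cup V(T_2)$, and since $\eta_N$ — the only vertex the two copies share — is an ancestor of neither, their intersection is $\{\rho_N\}$; hence $\lca_N(x,y)=\rho_N$ and $t(\lca_N(x,y))=t(\rho_N)=k=\sigma(xy)$, the last equality by Definition~\ref{def:polarcat}(i). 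These cases cover all distinct pairs, so $(N,t)$ explains $\SM$.

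I expect the only mildly delicate point to be the bookkeeping around the hybrid leaf $v$: one must keep track throughout of the fact that the leaf $v_i$ of $T_i$ has been absorbed into $\eta_N$ — so that an lca ``taken inside $T_1$'' that would be $v_1$ never actually occurs and $\eta_N$ does not sneak in as a spurious common ancestor of a leaf other than $v$ — and one must use that $\SM_1,\SM_2$ are induced substrudigrams so that colours transfer verbatim from the caterpillars back to $\SM$. Beyond that, the whole argument is a routine ancestor computation in a galled-tree with a single gall.
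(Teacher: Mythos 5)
Your proof is correct and follows essentially the same route as the paper's: the same split into pairs lying on different sides of the gall (where the lca is $\rho_N$ and the label is $k$ by Definition~\ref{def:polarcat}(i)) versus pairs inside one $V(\SM_i)$ (where the lca computation reduces to the caterpillar $T_i$ and the label transfers because $(T_i,t_i)$ explains the induced substrudigram $\SM_i$). Your treatment is merely more explicit about the ancestor bookkeeping around $v$, $v_1$, $v_2$ and $\eta_N$, which the paper compresses into ``it is easy to verify.''
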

\begin{proof}
  Let $\SM=(X,\Upsilon,\sigma)$ be a $(v,\SM_1,\SM_2)$-polar-cat and put
  $(N,t)\coloneqq(\N(v,\SM_1,\SM_2), t(v,\SM_1,\SM_2))$. Let $u_1$ and $u_2$ be the two children of
  $\rho_N$. Note that $N(u_1)$ and $N(u_2)$ are precisely the two caterpillar trees $(T_1,t_1)$
  and $(T_2,t_2)$ that explain $\SM_1$ and $\SM_2$, respectively. Let $W_1$ and $W_2$ be the vertex set
  of $\SM_1$ and $\SM_2$, respectively. 
    
   Let $x$ and $y$ be vertices of $\SM$ and, therefore, leaves in $N$. Suppose first that $x\in
  W_i\setminus \{v\}$ and $y\in W_j\setminus \{v\}$ with $\{i,j\}=\{1,2\}$. Since $\SM -v = (\SM_1
  -v) \join_k (\SM_2-v)$, we have $\sigma(xy)=k$. Moreover, $x$ and $y$ are leaf-children of vertices that are
  located on different sides of the unique gall in the elementary network $N$. Thus,
  $\lca_N(x,y)=\rho_N$. By construction, $t(\rho_N)=k$ and therefore, $t(\lca_N(x,y))=k$. 
  Suppose now that $x$ and $y$ are distinct elements of $W_i$ for some $i\in \{1,2\}$. In this case, $x,y$ are both leaves of $N(u_i)$. Note that
  $\parent_N(x)\neq \parent_N(y)$ and we may assume w.l.o.g.\ that
  $\parent_N(y)\prec_N\parent_N(x)$. It is easy to verify that $u\coloneqq \lca_N(x,y) = \lca_{T_{i}}(x,y)
  = \parent_N(x)$. Since the labeling $t$ restricts to $t_i$ on the inner vertices of $N(u_i)$ and
  since $(T_i,t_i)$ explains $S_i$, we thus have $t(u)=t_i(u)=\sigma(xy)$. In summary, $(N,t)$
  explains $\SM$.
\end{proof}

We now investigate the structures of modules in polar-cats.

\begin{lemma}\label{lem:pc=>primitive}
    If $\SM$ is a handsome polar-cat, then $\SM$ is truly-primitive.
\end{lemma}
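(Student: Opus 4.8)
The plan is to pass to a galled-tree explanation of $\SM$ and then rule out non-trivial modules by a short case analysis on a handful of ``test vertices''. Since a handsome polar-cat has at least three vertices (Lemma~\ref{lem:morerainbowtriangle}), it suffices to show that $\SM$ is \emph{primitive}. If $\SM$ is a rainbow-triangle this is Observation~\ref{obs:rainbowtriangle}, so I would assume $|X|\geq 4$, where $X\coloneqq V(\SM)$. By Lemma~\ref{lem:pc=>gatex} and Observation~\ref{obs:okDef2}, $\SM$ is explained by a strong, elementary, quasi-discriminating galled-tree $(N,t)$; I would fix notation for its unique gall $C$, writing $\rho\coloneqq\rho_C$, $\eta\coloneqq\eta_C$, and letting $P^1,P^2$ be the two sides of $C$ with interior vertices $a_1,\dots,a_p$ along $P^1$ (ordered from $\rho$ towards $\eta$) and $b_1,\dots,b_q$ along $P^2$; by~\ref{def:strong1} we have $p,q\geq 1$. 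Since $(N,t)$ is elementary, $\eta$ has a unique leaf-child $v$, each $a_i$ a unique leaf-child $\alpha_i$, each $b_j$ a unique leaf-child $\beta_j$, and $X=\{\alpha_1,\dots,\alpha_p,\beta_1,\dots,\beta_q,v\}$, so $p+q\geq 3$.

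Next I would record the pairwise least common ancestors in $(N,t)$: writing $k\coloneqq t(\rho)$, one has $\sigma(\alpha_i\beta_j)=k$ for all $i,j$; $\sigma(\alpha_i\alpha_{i'})=t(a_{\min(i,i')})$ for $i\ne i'$ and, symmetrically, $\sigma(\beta_j\beta_{j'})=t(b_{\min(j,j')})$; and $\sigma(\alpha_i v)=t(a_i)$, $\sigma(\beta_j v)=t(b_j)$. From quasi-discrimination I would also note $t(a_1)\ne k$ and $t(a_i)\ne t(a_{i-1})$ for $2\leq i\leq p$, and symmetrically on $P^2$. The remaining argument uses only these equalities together with the module property $\sigma(xz)=\sigma(yz)$ whenever $x,y\in M$ and $z\notin M$. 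Assuming, for contradiction, a module $M$ with $2\leq|M|\leq|X|-1$, I would split on whether $v\in M$. If $v\notin M$, then $z=v$ is a legal test vertex: if $M$ lies on one side, say $M\subseteq\{\alpha_1,\dots,\alpha_p\}$ with two smallest indices $i_1<i_2$, the $z=v$ test gives $t(a_{i_1})=t(a_{i_2})$, so $i_2\geq i_1+2$ (else consecutive side-labels would coincide), whence $\alpha_{i_1+1}\notin M$ and the test $z=\alpha_{i_1+1}$ forces $t(a_{i_1})=t(a_{i_1+1})$, contradicting quasi-discrimination; if $M$ meets both sides, then either $\alpha_1\notin M$ and $z=\alpha_1$ (with any $\alpha_i,\beta_j\in M$) forces $t(a_1)=k$ (impossible), or $\alpha_1,\beta_1\in M$, in which case $z=v$ forces $t(a_1)=t(b_1)\ne k$ while some missing side-leaf (w.l.o.g.\ $\alpha_{i'}\notin M$; one exists since $z=v$ already excludes $M=X\setminus\{v\}$) forces, via $z=\alpha_{i'}$, $x=\alpha_1$, $y=\beta_1$, that $t(a_1)=k$ --- again impossible. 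If $v\in M$, then as $M\ne X$ some side-leaf is missing, w.l.o.g.\ $\alpha_{i_0}\notin M$; if $M$ also contains some $\beta_j$, then each missing $\alpha_m$ forces, via $z=\alpha_m$, $x=\beta_j$, $y=v$, that $t(a_m)=k$, so $\alpha_1\in M$, and taking $i_0$ minimal with $\alpha_{i_0}\notin M$ the test $z=\alpha_{i_0}$, $x=\alpha_{i_0-1}\in M$, $y=v$ forces $t(a_{i_0-1})=t(a_{i_0})$ (impossible); otherwise $M\subseteq\{\alpha_1,\dots,\alpha_p,v\}$, so $\beta_1\notin M$ and $z=\beta_1$ with $x\in M\setminus\{v\}$, $y=v$ forces $t(b_1)=k$ (impossible). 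This exhausts the cases, so every module of $\SM$ is trivial; being primitive and of size at least $3$, $\SM$ is truly-primitive.

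The step I expect to be the main obstacle is exactly this last case analysis. Because side-labels may repeat along a side as long as they are not consecutive, merely comparing $\sigma(\cdot,v)$ over $M$ does not contradict quasi-discrimination, so in each branch one must locate a \emph{second} test vertex with care --- either the leaf hanging immediately below a module element (to land on two consecutive side-vertices carrying the same label) or a leaf on the opposite side (to force the root-label $k$ onto a side-child, which $t(a_1)\ne k$, or its consecutive version, forbids). Tracking which side-leaves must lie in or out of $M$ in each branch is the only place that needs genuine care; everything else follows directly from the $\lca$-computations.
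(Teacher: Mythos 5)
Your proof is correct and follows essentially the same route as the paper's: both reduce to the strong, elementary, quasi-discriminating galled-tree $(\N(v,\SM_1,\SM_2),t(v,\SM_1,\SM_2))$ and then refute any module $M$ with $2\leq |M|<|X|$ by exhibiting an outside ``test vertex'' whose $\lca$-labels distinguish two elements of $M$, with quasi-discrimination supplying the contradiction. Your explicit indexing of the interior vertices of the two sides of the gall is just a more coordinatized presentation of the paper's case analysis (its Case A and Cases (I)/(II), split on whether the hybrid's leaf lies in $M$, correspond directly to your split on $v\in M$ and on whether $M$ meets one or both sides).
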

\begin{proof} 
	Let $\SM=(X,\Upsilon,\sigma)$ be a handsome $(v, \SM_1,\SM_2)$-polar-cat. If $\SM$ is a 
	rainbow-triangle, then it is truly-primitive by Observation~\ref{obs:rainbowtriangle}.
	We may thus assume $|X|\geq4$. By
	Lemma~\ref{lem:pc=>gatex}, $\SM$ is explained by the labeled galled-tree $(N,t)\coloneqq
	(\N(v,\SM_1,\SM_2), t(v,\SM_1,\SM_2))$. By Observation~\ref{obs:okDef1} and \ref{obs:okDef2}, $(N,t)$ is both strong,
	elementary, leaf-separated and quasi-discriminating. Let $C$ denote the unique underlying gall of $N$ and let
	$\rho$ and $\eta$ denote its root and unique hybrid vertex, respectively. Let $h$ be the unique
	leaf in $N$ that is adjacent to $\eta$. In what follows, we let $\tilde w\coloneqq \parent_N(w)$
	denote the unique parent of a leaf $w$ in $N$. Since $N$ is elementary, $\tilde w$ is located on
	$C$ for every leaf $w$ in $N$.  
	
   Let $M\subseteq X$ be a set of vertices such that $2\leq|M|<|X|$, and put $\overline M\coloneqq X\setminus M$. Assume, for contradiction, that
   $M$ is a module of $\SM$. To recall, $X$ is the leaf set of $N$. Moreover, since $\preceq_N$ is a
   partial order and each leaf has a distinct parent, we always have that $\tilde v\prec_N \tilde
   w$, $\tilde w\prec_N \tilde v$ or $\tilde v$ and $\tilde w$ are  $\preceq_N$-incomparable for any two distinct $v,w\in X$.
   Moreover, no leaf has $\rho$ as a parent. We show first that the following case cannot occur.

\begin{owndesc}
	\item[\textnormal{\textit{Case A: There exist some $x,y\in M$ and some $u \in \overline M$ such
	that $\tilde x\succ_N \tilde u\succ_N \tilde y$.}}] We may, without loss of generality, assume
	that $x,y\in M$ are taken so that there is no $z\in M$ with $\tilde x\succ_N \tilde z\succ_N
	\tilde y$ and such that $\tilde u$ is the (unique) non-leaf child of $\tilde x$. Since $\tilde
	u\succ_N \tilde y$, we have $\tilde u\neq \eta$. This and the fact that $(N,t)$ is
	quasi-discriminating implies that $t(\tilde x)\neq t(\tilde u)$. Moreover, since $\tilde x\succ_N
	\tilde u\succ_N \tilde y$, we have $\lca_N(x,u)=\tilde x$ and $\lca_N(y,u)=\tilde u$. Therefore,
	$\sigma(xu)=t(\tilde x)\neq t(\tilde u) =\sigma(uy)$ which implies that $M$ is not a module; a
	contradiction. 
\end{owndesc}

\noindent
	Hence, Case A cannot occur under the assumption that $M$ is a module. We now distinguish between
	the cases that the unique child $h$ of $\eta$ satisfies (I) $h\in M$ or (II) $h\notin M$. 

\begin{owndesc}
	\item[\textnormal{\textit{Case (I): $h\in M$.}}] Since $|M|>1$, there must be a leaf $z\in
	M\setminus \{h\}$. In particular, $\tilde z\succ_N \eta$ holds. This and the fact that Case A
	cannot occur implies that, for the unique parent $\tilde x$ of
	$\eta$ in $N$ with $\tilde x \preceq_N \tilde z$, we have $x\in M$ (note, $x=z$ might be
	possible). Moreover, $|M|<|X|$ implies that there is some $u\in \overline M$.
	
	Suppose first that $\tilde u$ and $\tilde x$ are comparable for all $u\in \overline M$. Let
	$P^1$ be the side of $C$ that contains $\tilde x$ and $P^2$ be the other side of $C$. Note that
	$P^2$ contains at least one vertex distinct from $\rho$ and $\eta$, since $N$ is strong. Since
	all vertices $\tilde y \neq \rho, \eta$ along $P^2$ are incomparable with $\tilde x$, 
	since $\eta=\tilde h$ and $h\in M$ and since no leaf of $N$ is adjacent to $\rho$, we have $y \in M$ for all
	$\tilde y$ along $P^2$. Since Case A cannot occur, it follows that $v\in \overline M$ for the
	unique child $\tilde v$ of $\rho$ on $P^1$. Since $(N,t)$ is quasi-discriminating and since
	$\tilde v\neq \eta$, we have $t(\rho)\neq t(\tilde v)$. This together with $ \lca_N(x,v)=\tilde
	v$ and $\lca_N(y,v)=\rho$ implies $\sigma(xv)\neq \sigma(yv)$; a contradiction to $M$ being a
	module and $x,y\in M$ and $v\in \overline M$.
	Consequently, there is some $u\in \overline M$ such that $\tilde u$ and $\tilde x$ are
	incomparable. Since $u\succ_N \eta$, $h\in M$ and since Case A cannot occur, it follows
	that the unique child $\tilde v$ of $\rho$ with $\tilde v \succeq_N \tilde u$ satisfies $v \in
	\overline M$. Again we have $t(\rho)\neq t(\tilde v)$. By construction, $\lca_N(h,v) = \tilde v $
	and $\lca_N(x,v)=\rho$. Since $x,h\in M$ and $v\in\overline M$ we have $\sigma(hv)=\sigma(xv)$.
	However, $\sigma(hv) = t(\tilde v)$ and $\sigma(xv) = t(\rho)$ imply $\sigma(hv)\neq \sigma(xv)$;
	a contradiction. 
\end{owndesc}

\noindent
Thus, neither Case A nor (I) can occur if $M$ is a module. Hence, we consider the final
\begin{owndesc}
	\item[\textnormal{\textit{Case (II): $h\notin M$.}}]
	One easily verifies that $\tilde x = \lca_N(x,h) $ and $\tilde y = \lca_N(y,h)$ holds for all
	$x,y\in M\subseteq X\setminus \{h\}$. Hence, $t(\tilde x) = \sigma(xh)=\sigma(yh)=t(\tilde y)$
	must hold for all $x,y\in M$. Note that $\tilde x\neq \rho, \eta$ for all $x\in M$. The latter
	two arguments imply together with the fact that $(N,t)$ is quasi-discriminating that none of the
	elements $\tilde x$ and $\tilde y$ can be adjacent in $N$. Hence, if there are two $x,y\in M$
	such that $\tilde x\succ_N \tilde y$, then there is a vertex $\tilde u$ on $C$
	that satisfies $\tilde
	x\succ_N \tilde u \succ_N \tilde y$ and $t(\tilde u)\neq t(\tilde x)$. Consequently, $u\in
	\overline M$ and we have Case A; a contradiction. Thus, any two $x,y\in M$ must be incomparable
	in $N$. One easily verifies that, in case $|M|\geq 3$, there are two $x,y\in M$ such $\tilde
	x\succ_N \tilde y$ since $N$ is elementary. This and $|M|>1$ implies that $|M|=2$. Let 
	$M=\{x,y\}$. Since $N$ is strong and $|X|\geq 4$ and $\tilde x$ and $\tilde y$
	are incomparable, there is some $\tilde u$ on $C$ that is comparable to either $\tilde x$ or $\tilde y$. 
	
	Suppose first that $\tilde u\succ_N \tilde x$ and thus, $\lca_N(x,u)=\tilde u$. As Case A cannot occur, we
	may w.l.o.g.\ assume that $\tilde u$ is a child of $\rho$. Since $(N,t)$ is quasi-discriminating we
	have $t(\tilde u)\neq (\rho)$. Since $\tilde u$ and $\tilde y$ are incomparable, we have
	$\lca_N(u,y)=\rho$. Hence, $\sigma(xu)=t(\tilde u)\neq (\rho)=\sigma(uy)$ together with
	$u\notin M$ yields a contradiction to $M=\{x,y\}$ being a module. Therefore, $\tilde u\succ_N
	\tilde x$ and, by similar arguments, $\tilde u\succ_N \tilde y$ is not possible for any $u\in
	\overline M$. This and $M=\{x,y\}$ implies that $\tilde x$ and $\tilde y$ must be the two
	children of $\rho$. In this case, there is some $u\in\overline M$ such that $\tilde x\succ_N \tilde u$ and thus, $\lca_N(x,u)=\tilde x$.
	Since $\tilde x\neq \eta$ is a child of $\rho$, we have $t(\tilde x)\neq t(\rho)$. Moreover,
	$\tilde y$ and $\tilde u$ are incomparable and, thus, $\lca_N(u,y)=\rho$. Hence,
	$\sigma(uy)=t(\rho)\neq t(\tilde x)=\sigma(xu)$ yields again a contradiction. 
\end{owndesc}

\noindent
   In summary, there is no $M\subseteq X$ with $2\leq|M|<|X|$  and such that $M$ is a module. 
   Hence $\SM$ consists of trivial modules only, i.e., $\SM$ is primitive. Since $|X|\geq 4$, 
   $\SM$ is truly-primitive. 
\end{proof}

\begin{lemma}\label{lem:module-at-gall}
	Let $(N,t)$ be a galled-tree that contains a gall $C$. If $u$ and $v$ are the two 
	children of $\rho_C$ that are located on $C$, then $L(N(u))\cup L(N(v))$ is a module of 
	$\mathcal{S}(N,t)$ containing at least two vertices.
\end{lemma}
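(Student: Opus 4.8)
The plan is to recast everything in terms of the clustering system of $N$ and to lean on the fact that galled-trees are global lca-networks. Writing $\CC \coloneqq \CC_N$, we have $L(N(u)) = \CC(u)$ and $L(N(v)) = \CC(v)$, so $M = \CC(u)\cup\CC(v)$; moreover, by Lemma~\ref{lem:lev1-klca}, $\mathcal{S}(N,t)$ is defined and $\lca_N(A)$ is well-defined for every nonempty $A\subseteq V(N)$, which I will use freely. Fix the gall $C$ with root $\rho_C$ and hybrid $\eta_C$, and write its two sides as $P^1$ (through $u$) and $P^2$ (through $v$), say $P^1 = (\rho_C = a_0, a_1, \dots, a_m = \eta_C)$ with $u = a_1$ and $P^2 = (\rho_C = b_0, b_1, \dots, b_n = \eta_C)$ with $v = b_1$; since $N$ has no parallel edges we cannot have $m = n = 1$, so, possibly after swapping the roles of $u$ and $v$, assume $m \ge 2$, which gives $u \neq \eta_C$, $u \succ_N \eta_C$ and $u \neq v$. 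I will work with the \emph{gall region} $R \coloneqq \{z\in V(N) : z\preceq_N u \text{ or } z\preceq_N v\}$, for which $L(N)\cap R = \CC(u)\cup\CC(v) = M$ by construction.

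The key step, which I expect to be the main obstacle, is the structural claim: the only vertices of $R$ having an in-neighbour outside $R$ are $u$ and $v$, and consequently every vertex $w\notin R$ that is an ancestor of a leaf of $M$ satisfies $w\succeq_N\rho_C$. The first part follows because a vertex $z\prec_N u$ with an in-neighbour $p\notin R$ also has an in-neighbour inside $R$ (along a directed path from $u$ down to $z$), hence is a hybrid, namely the hybrid $\eta_{C'}$ of some gall $C'$; using that internal side-vertices of a gall are in-degree-one tree-vertices whose unique in-edge comes from the preceding side-vertex, and that biconnected components are inclusion-maximal, one forces $C'$ and $C$ to share at least two vertices, so $C' = C$, whence $z = \eta_C$ and $p\in\{a_{m-1},b_{n-1}\}\subseteq R$ --- a contradiction. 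The second part then follows since any directed path from outside $R$ into $R$ must first meet $u$ or $v$, so $w\succ_N u$ or $w\succ_N v$ (strictly, as $w\notin R$), and since $\rho_C$ is the unique parent of the internal side-vertex $a_1$ (and the analogous chase along $P^1$ handles the case $v = b_1$), this yields $w\succeq_N\rho_C$.

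The module property is then immediate. Let $a\in M$ and $c\in X\setminus M$. Any common ancestor $w$ of $a$ and $c$ has $c\in\CC(w)$; since $c\notin M = L(N)\cap R$, and since $w\in R$ would give $\CC(w)\subseteq\CC(u)\cup\CC(v) = M$, we have $w\notin R$, hence $w\succeq_N\rho_C$ by the claim. Conversely, every common ancestor of $\rho_C$ and $c$ is a common ancestor of $a$ and $c$, because $a\preceq_N u\prec_N\rho_C$ or $a\preceq_N v\prec_N\rho_C$. So $\{a,c\}$ and $\{\rho_C,c\}$ have the same common ancestors in $N$, giving $\lca_N(a,c) = \lca_N(\{\rho_C,c\})$; thus $\sigma(ac) = t(\lca_N(a,c)) = t(\lca_N(\{\rho_C,c\}))$ is independent of the choice of $a\in M$, i.e.\ $\sigma(ac) = \sigma(bc)$ for all $a,b\in M$ and all $c\in X\setminus M$, which says exactly that $M$ is a module of $\mathcal{S}(N,t)$.

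For the size bound I would argue $|M|\ge2$ as follows. Since $\emptyset\neq\CC(\eta_C)\subseteq\CC(u)\cap\CC(v)\subseteq M$, we may assume $\CC(\eta_C) = \{x\}$ and must produce a second leaf of $M$. The vertex $a_{m-1}$ (the parent of $\eta_C$ on $P^1$, possibly equal to $u$) is a tree-vertex with $\indeg_N(a_{m-1}) = 1$, so $N$ being phylogenetic forces $\outdeg_N(a_{m-1})\ge2$; thus $a_{m-1}$ has a child $y\neq\eta_C$ with $\emptyset\neq\CC(y)\subseteq\CC(a_{m-1})\subseteq\CC(u)\subseteq M$. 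If $\CC(y)\neq\{x\}$ we are done; and $\CC(y) = \{x\}$ is impossible because the directed paths from $a_{m-1}$ to $x$ through $\eta_C$ and through $y$ differ already at their second vertex, placing $a_{m-1}$ on a nontrivial biconnected component $C'\neq C$, which (chasing the in-edges of the internal side-vertices of $C'$ up along $P^1$) again forces $C'$ and $C$ to share $\rho_C$ and $a_1$, contradicting inclusion-maximality. I expect all of the genuine difficulty to sit in the biconnected-component bookkeeping at these two spots; everything else is routine manipulation of clusters and of the lca operation.
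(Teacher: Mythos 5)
Your proof is correct and follows essentially the same route as the paper's: both reduce the module property to showing that $\lca_N(x,y)=\lca_N(\rho_C,y)$ for every $x\in M$ and $y\notin M$, and both obtain $|M|\geq 2$ by exhibiting a leaf of $M$ outside $\CC_N(\eta_C)$. The one substantive difference is that the paper simply asserts ``$\lca_N(x,y)\succeq_N\rho_C$'' from the fact that $y$ is not a descendant of $u$ or $v$, whereas you prove this via the structural claim that the region below $u$ and $v$ can only be entered through $u$ or $v$ (forcing any offending entry point to be the hybrid of a gall sharing an edge with $C$, contradicting maximality of biconnected components); this fills in a step the paper leaves implicit, at the cost of the bookkeeping you flag. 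Your argument for $|M|\geq2$ (the parent of $\eta_C$ on the longer side has a second child whose cluster cannot equal $\CC_N(\eta_C)$) differs in detail from the paper's case split on whether $L(N(u))=L(N(\eta_C))$, but both are sound.
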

\begin{proof}
	Let $u$ and $v$ be two 	children of $\rho_C$ that are located on $C$ and 
	put $M\coloneqq L(N(u))\cup L(N(v))$. Let $\SM =(X,\Upsilon, \sigma)$ be the strudigram 
	$\mathcal{S}(N,t)$. If
	$M=X$, then $M$ is trivially a module of $\SM$. Otherwise, there is a vertex $y\in X\setminus
	M$. In particular, $y$ is a leaf in $N$ and thus, neither a descendant of $u$ nor of
	$v$. Hence, $\lca_{N}(x,y)\succeq_N\rho_C$ for every $x\in M$. Consequently,
	$\sigma(xy)=t(\lca_N(x,y))=t(\lca_N(\rho_C,y))=t(\lca_N(x',y))=\sigma(x'y)$ for all $x,x'\in
	M$  and any $y\notin M$. Thus, $M$ is a module of $\SM$.

	We consider now the size of $M$. Let $\eta_C$ be the unique hybrid  of $C$. 
	Since $u$ and $v$ are contained in the gall $C$ and $\eta_C$ is the
	unique $\preceq_C$-minimal vertex in $C$ it follows that $\eta_C\preceq_N u,v$.
	By \cite[L.\ 17]{Hellmuth2023}, $L(N(\eta_C))\subseteq	L(N(v))$ and 
	$L(N(\eta_C))\subseteq	L(N(u))$ must hold.
	We now show that
	$M\setminus L(N(\eta_C))\neq \emptyset$. This statement is clearly satisfied if $L(N(\eta_C))$
	is a proper subset of both $L(N(u))$ and of $L(N(v))$. Hence, suppose that
	$L(N(u))=L(N(\eta_C))$. It is an easy task to verify that $u=\eta_C$, and $v\neq\eta_C$. The latter
	implies that $v$ has at least one child $v'$ not on $C$, so that $\emptyset\neq
	L(N(v'))\subset L(N(v))$. In particular, $L(N(v))\setminus L(N(\eta_C))\neq\emptyset$.
	Analogously, $L(N(v))=L(N(\eta_C))$ implies $L(N(u))\setminus L(N(\eta_C))\neq\emptyset$. In
	both cases, we thus have that \[M\setminus L(N(\eta_C)) = (L(N(u))\setminus
	L(N(\eta_C)))\cup(L(N(v))\setminus L(N(\eta_C)))\neq \emptyset.\] To summarize, we have
	$|M|=|M\setminus L(N(\eta_C))|+|L(N(\eta_C))|\geq 1 + 1=2$.
\end{proof}

\begin{lemma}\label{lem:primGatex=>eqdGaT}
	If $\SM$ is a truly-primitive and \textup{\gatex} strudigram,
    then every labeled galled-tree $(N,t)$ that explains $\SM$ is strong, elementary and quasi-discriminating. 
\end{lemma}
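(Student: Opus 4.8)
The plan is to fix an arbitrary labeled galled-tree $(N,t)$ that explains the truly-primitive strudigram $\SM=(X,\Upsilon,\sigma)$ and to derive \emph{elementary}, \emph{strong} and \emph{quasi-discriminating} from the fact that $\SM$ has only trivial modules. First I would collect the modules that primitivity then kills. Since a tree-explainable strudigram has no truly-primitive substructure (Theorem~\ref{thm:unp<=>treeExpl}) while $\SM$ is itself one, $N$ is not a tree and hence contains at least one gall. For every gall $C$ of $N$, with $u,v$ the two children of $\rho_C$ on $C$, Lemma~\ref{lem:module-at-gall} gives that $L(N(u))\cup L(N(v))$ is a module of size $\geq2$, so primitivity forces $L(N(u))\cup L(N(v))=X$ and in particular $\CC_N(\rho_C)=X$. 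Moreover, for every hybrid $\eta$ of $N$ the set $\CC_N(\eta)$ is a module: given $x\in\CC_N(\eta)$ and $y\notin\CC_N(\eta)$, the vertex $\lca_N(x,y)$ is comparable to $\eta$ (in a galled-tree every ancestor of a vertex that is $\preceq_N\eta$ is comparable to $\eta$), and it cannot be $\preceq_N\eta$ (that would force $y\preceq_N\eta$), so it is an ancestor of $\eta$ and therefore coincides with the (unique) $\lca_N(\eta,y)$, independently of $x$. Hence $\CC_N(\eta)\in\{X\}\cup\{\{x\}:x\in X\}$; the value $X$ is impossible, since it would collapse both sides of the gall of $\eta$ to a single edge, so $\CC_N(\eta)$ is a singleton and, by phylogenicity, $\eta$ is a leaf or has exactly one child, which is a leaf.

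With these in hand, the first main step is to show that $N$ is elementary. Since distinct galls have distinct hybrids, it suffices to prove that $N$ has a unique hybrid; this is the technical core and combines the facts above — all gall-roots have cluster $X$ (so they all lie on the ancestor-chain of $\lca_N(X)$) and every hybrid is essentially a pendant leaf — with property (N3O) of $\mathfrak C_N$ (Theorem~\ref{thm:galled-no3overlap}) to exclude a second gall. Granting this, let $C$ be the unique gall, with hybrid $\eta_C$. Since $C$ is the only gall, there is a unique directed $\rho_N$--$\rho_C$ path; any internal vertex on it would, by phylogenicity, carry a pendant subtree, all of whose leaves already lie below $\rho_C$ by the previous paragraph, which is impossible, so $\rho_C=\rho_N$. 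The remaining conditions in Definition~\ref{def:elementary} then follow from phylogenicity together with primitivity: a non-leaf pendant subtree hanging off a side of $C$, a vertex with two pendant leaves, or a non-root inner vertex with no pendant leaf would each produce either a cluster that is a non-trivial module or a non-trivial module among the leaves.

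For the last step, write the two sides of the unique gall as $\rho_C=w^i_0\to w^i_1\to\cdots\to w^i_{m_i}\to\eta_C$ for $i\in\{1,2\}$, let $a^i_t$ be the pendant leaf of $w^i_t$ (for $t\geq1$), and let $h$ be the leaf at or below $\eta_C$. The formulas $\lca_N(a^i_t,a^i_s)=w^i_{\min(t,s)}$, $\lca_N(a^i_t,h)=w^i_t$ and $\lca_N(a^1_t,a^2_s)=\rho_C$ (for $t,s\geq1$) convert every forbidden feature into a concrete module. For strongness: if $m_i=0$ for some side then either $|X|=2$ (impossible for a truly-primitive strudigram) or, using $m_j\geq1$ on the other side, $\{h,a^j_{m_j}\}$ is a size-$2$ module; and if $m_1=m_2=1$ then $|X|=3$, so by Observation~\ref{obs:rainbowtriangle} $\SM$ is a rainbow-triangle, which forces $t(w^1_1),t(w^2_1),t(\rho_C)$ (the colours of $a^1_1h$, $a^2_1h$, $a^1_1a^2_1$) to be pairwise distinct, i.e.\ exactly~\ref{def:strong2}; with~\ref{def:strong1} (already $m_i\geq1$) this makes $N$ strong. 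For quasi-discriminating, take an edge $(u,v)$ with $v$ an inner tree-vertex, necessarily $v=w^i_t$ with $t\geq1$ and $u=w^i_{t-1}$ (where $w^i_0=\rho_C$); if $t\geq2$ then $t(u)=t(v)$ makes $\{a^i_{t-1},a^i_t\}$ a size-$2$ module, and if $t=1$ then $t(u)=t(v)$ forces $\sigma(a^i_1,z)=t(w^i_1)$ for \emph{every} other leaf $z$, so $X\setminus\{a^i_1\}$ is a module, which is non-trivial because $|X|\geq3$. Each case contradicts primitivity, so $t(u)\neq t(v)$ and $(N,t)$ is quasi-discriminating.

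The step I expect to be the genuine obstacle is proving that $N$ has only one gall: ``galled-tree'' permits galls to be stacked or attached at cut-vertices in several configurations, and excluding all of them needs Lemma~\ref{lem:module-at-gall}, the ``hybrids are essentially pendant leaves'' consequence of primitivity, and property (N3O) of $\mathfrak C_N$ used in combination rather than one at a time. Everything after $N$ is known to be elementary is routine bookkeeping with the three $\lca$-formulas above.
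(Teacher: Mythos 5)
Your proposal never actually proves the step on which everything else rests: that $N$ contains exactly one gall. You reduce \emph{elementary} to ``$N$ has a unique hybrid,'' list the ingredients you expect to need (all gall-roots have cluster $X$, hybrids have singleton clusters, property (N3O) of $\mathfrak{C}_N$), and then write ``Granting this\dots''; at the end you yourself flag this as ``the genuine obstacle.'' That is a real gap, not bookkeeping: with several galls permitted to stack or attach at cut-vertices, the combination of those three facts does require a case analysis that you have not carried out, and it is not obvious that (N3O) alone closes it. The irony is that the fact you \emph{did} establish already does the job, and this is exactly how the paper proceeds: for \emph{every} gall $C$ with children $u,v$ of $\rho_C$ on $C$, Lemma~\ref{lem:module-at-gall} plus primitivity gives $L(N(u))\cup L(N(v))=X$, which forces $\rho_C=\rho_N$ \emph{and} that $\rho_N$ has no children besides $u$ and $v$; two distinct galls would then both have to contain $\rho_N$ together with both of its children, contradicting the fact that distinct non-trivial biconnected components share at most one vertex. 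No appeal to (N3O) or to the structure of hybrid clusters is needed, so your planned detour through ``$\CC_N(\eta)$ is a module, hence a singleton'' (which itself leans on the unproved claim that every ancestor of a vertex below $\eta$ is comparable to $\eta$) is both incomplete and avoidable.

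The rest of the proposal is in the same spirit as the paper and essentially sound. Your derivation of the remaining conditions of Definition~\ref{def:elementary} is stated very loosely (the paper makes this precise via the partition into the sets $\mathcal{L}(w)$, each of which is shown to be a module and hence a singleton), but the idea is right. The arguments for \ref{def:strong1}, \ref{def:strong2} via the rainbow-triangle case $|X|=3$, and quasi-discrimination via the explicit $\lca$ formulas all match the paper's strategy of converting each forbidden feature into a non-trivial module; your $t=1$ subcase, exhibiting $X\setminus\{a^i_1\}$ as a module, is a clean variant of the paper's argument for the case $v=\rho_N$. But until the uniqueness of the gall is actually proved, the proof is not complete.
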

\begin{proof}
	Let $\SM=(X,\Upsilon,\sigma)$ be a truly-primitive \gatex strudigram. Hence, $\SM$ is
	explained by some labeled galled-tree $(N,t)$. Since $\SM$ is
	truly-primitive, we have $|X|\geq 3$. By Theorem~\ref{thm:unp<=>treeExpl} and since $\SM =
	\SM[X]$ is primitive, $\SM$ cannot be explained by a labeled tree and, therefore, $N$ contains at
	least one gall $C$. Let $u$ and $v$ denote the two children of $\rho_C$ that are located on $C$.
	By Lemma~\ref{lem:module-at-gall} the set $M = L(N(u))\cup L(N(v))$ is a module of $\SM$ such
	that $|M|>1$. Since $\SM$ is primitive it has no nontrivial modules, which enforces $M=X$. This
	means that $\rho_C=\rho_N$, and $\rho_N$ does not have any other children than $u$ and $v$. In
	particular, $C$ is the only gall of $N$.

   We continue with showing that $N$ is elementary. Let $W = V(C)\setminus \{\rho_C\}$. For each
   $w\in W$ we introduce the set \[\mathcal{L}(w)\coloneqq\left\{x\in L(N(w)) \mid \forall u\in W
   \text{ with } u\prec_N w \text{ we have } x\npreceq_N u\right\}.\] In simple words,
   $\mathcal{L}(w)$ consists of the leaves in $N(w)$ which are not descendant of any $u\in W$ with
   $u\prec_N w$. Note that each vertex $w\in W\setminus\{\eta_C\}$ has a child $v_w$ that is not located on $C$. In particular, $v_w$
   is incomparable to each $u\in W$ with $u\prec_N w$. Hence, $L(N(v_w))\subseteq \mathcal{L}(w)$
   and, therefore, $|\mathcal{L}(w)|\geq 1$ for all $w\in W\setminus\{\eta_C\}$. 
	Moreover, it is clear that $|\mathcal{L}(\eta_C)|=|L(N(\eta_C))|\geq1$.
   By construction,
   $\mathcal{L}(w)\cap\mathcal{L}(w')=\emptyset$ for all distinct $w,w'\in W$. This and
   $|\mathcal{L}(w)|\geq 1$ for all $w\in W$ implies that $|L(N)|=|X|>|\mathcal{L}(w)|\geq 1$ for
   all $w\in W$. Moreover, since $\rho_N = \rho_C$ has out-degree two, $\{ \mathcal{L}(w) \mid w\in
   W\}$ forms a partition of $L(N)$. Assume, for contradiction, that $|\mathcal{L}(w)|>1$ for some
   $w\in W$. Put $M=\mathcal{L}(w)$. Clearly, $\lca_N(M)\preceq_N w$ must hold. It is an easy task
   to verify that $\lca_N(x,u) = \lca_N(x',u) = \lca_N(w,u)\in V(C)$ and thus, $t(\lca_N(x,u)) =
   t(\lca_N(x',u))$ for every $u\in L(N)\setminus M$ and $x,x'\in M$. Hence, $M$ is a module of
   $\SM$ satisfying $1<|M|<|X|$; contradicting primitivity of $\SM$. Hence, $|\mathcal{L}(w)|=1$
   must hold for all $w\in W$ and since $N$ does not contain vertices with in- and out-degree 1,
   each $w\in W\setminus\{\eta_C\}$ has precisely one leaf-child with in-degree one. 
   Moreover, $|\mathcal{L}(\eta_C)|=1$ implies that $\eta_C$ itself is a leaf, or $\eta_C$ is an inner-vertex with precisely one child, and that child is a leaf.
   This together with the latter arguments implies that
   $N$ is elementary.
    
	We continue with showing that $(N,t)$ is a strong galled-tree, i.e. that $C$ is a strong gall
	and thus satisfies \ref{def:strong1} and \ref{def:strong2}. Let $P^1$ and $P^2$ be the two sides of $C$.
	In what follows, we let $\widetilde V(P^i)$ denote the set of vertices of $P^i$ 
	that are distinct from its end vertices, $i\in\{1,2\}$. 

	Assume, for contradiction, that $C$ does not satisfy \ref{def:strong1}. W.l.o.g., assume that
	$\widetilde  V(P^1)=\emptyset$, i.e., $P^1$ consists of $\rho_C$ and $\eta_C$ only. Consider the unique
	leaf $x$ of $N$ that satisfy $x\preceq\eta_C$ and the leaf-child $y\neq\eta_C$ of the $\preceq$-minimal vertex $v$ in $\widetilde V(P^2)$.
	Since $N$ is elementary and has $|X|\geq 3$ leaves we have $|M|<|X|$ for $M=\{x,y\}$. Moreover,
	for any leaf $u\neq x,y$ in $N$ we have $\parent_N(u)\succ v,\eta_C$ and, in particular,
	$\lca_N(x,u)=\parent_N(u)= \lca_N(y,u)$ and therefore, $t(\lca_N(x,u))= t(\lca_N(y,u))$ for all
	$u\in X\setminus M$. Thus, $M$ is a non-trivial module of $\SM$; a contradiction. Hence $C$ must
	satisfy \ref{def:strong1}.
	
	Note that since $N$ is elementary and $C$ satisfies \ref{def:strong1}, Condition \ref{def:strong2} is trivially satisfied whenever $|X|>3$.
	We thus consider the case $|X|=3$, which together with \ref{def:strong1} and $N$ being elementary
	implies that $\widetilde V(P^i)=\{v_i\}$ i.e. $|\widetilde V(P^i)|=1$ for $i\in \{1,2\}$ must hold.
	Moreover, for $i \in \{1,2\}$, the vertex $v_i$ has two children; namely a leaf $x_i$ and the hybrid $\eta$
	of $C$. Finally, $N$ must contain a unique leaf $x_3$ such that $x_3\preceq \eta$, so that
	$X=\{x_1,x_2,x_3\}$. As $\SM$ is a truly-primitive strudigram with three vertices it is, by
	Lemma~\ref{lem:morerainbowtriangle} a rainbow triangle. One now easily verifies that
	$|\{t(\rho_C),t(v_1),t(v_2)\}|=|\{\sigma(x_1x_2),\sigma(x_1x_3),\sigma(x_2x_3)\}|=3$, i.e. $C$
	satisfies \ref{def:strong2}. In summary so far, $(N,t)$ is a strong, elementary network.

   Finally, assume for contradiction that $(N,t)$ is not quasi-discriminating and let $v,v'\in
   V(C)\setminus \{\eta_C\}$ be adjacent tree vertices of $C$ such that $t(v)=t(v')$. Without loss
   of generality, we may suppose $\rho_N\succeq_N v\succ v'\succ\eta_C$. 
    Let $x'\neq \eta_C$ denote the leaf-child of $v'$ with in-degree one.
   First assume $\rho_N=v$ and thus that $v$ has no leaf-children. It
   is an easy task to verify that, in this case, $\lca(x',y) \in \{v',v\}$ for all $y\in M \coloneqq
   X\setminus \{x'\}$. Since $t(v) = t(v')$ and $2\leq |X|-1 = |M|<|X|$, $M$ is a non-trivial module
   of $\SM$; a contradiction. Hence, $v\neq \rho_N$ must hold. 
   Since $\rho_N\succ v\succ v'\succ\eta$ and $N$ is elementary, $v$ has a unique leaf-child $x$. Moreover, 
   since $v$ and $v'$ are adjacent, we have for $y\in X\setminus\{x,x'\}$ either (i)
   $\parent_N(y)\succ_N v,v'$ or (ii) $v,v'\succ_N \parent_N(y)$ or (iii) $\parent_N(y)$ is
   incomparable to both $v$ and $v'$. One easily verifies that, in Case (i) we have $\lca(x,y)
   =\lca(x',y) = \parent(y)$, in Case (ii) we have $\lca(x,y) = v$ and $\lca(x',y) = v'$ and in Case
   (iii) we have $\lca(x,y) = \lca(x',y) = \rho_N$. Hence, in all cases it holds that $t(\lca(x,y))
   =t(\lca(x',y))$ which implies that $M'=\{x,x'\}$ is a non-trivial module of $\SM$; a
   contradiction. Thus, $(N,t)$ must be quasi-discriminating. 
\end{proof}

We now state one of the main result of this section.

\begin{theorem}\label{thm:primGatex}
    For every strudigram $\SM$, the following statements are equivalent.\smallskip
    \begin{enumerate}
        \item[(a)] $\SM$ is explained by a strong, elementary and quasi-discriminating galled-tree.
        \item[(b)] $\SM$ is a handsome polar-cat.
		\item[(c)] $\SM$ is a truly-primitive polar-cat.
        \item[(d)] $\SM$ is truly-primitive and \textup{\gatex}.
    \end{enumerate}
\end{theorem}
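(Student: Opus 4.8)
The strategy is to prove the four statements equivalent by establishing the cycle of implications (a) $\Rightarrow$ (b) $\Rightarrow$ (c) $\Rightarrow$ (d) $\Rightarrow$ (a); each single step is then nothing more than an appeal to one of the lemmas developed above in this section. Concretely, (a) $\Rightarrow$ (b) is precisely the content of Lemma~\ref{lem:seqdGaT=>pc}. For (b) $\Rightarrow$ (c) I would observe that a handsome polar-cat is in particular a polar-cat, and that Lemma~\ref{lem:pc=>primitive} asserts that every handsome polar-cat is truly-primitive; hence $\SM$ is a truly-primitive polar-cat.

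For (c) $\Rightarrow$ (d), the hypothesis already supplies that $\SM$ is truly-primitive, so it only remains to see that $\SM$ is \gatex. Writing $\SM$ as a $(v,\SM_1,\SM_2)$-polar-cat, Lemma~\ref{lem:pc=>gatex} shows that $(\N(v,\SM_1,\SM_2),t(v,\SM_1,\SM_2))$ explains $\SM$, while Observation~\ref{obs:okDef1} certifies that this labeled directed graph is indeed a labeled galled-tree; thus $\SM$ is explained by a galled-tree, i.e.\ \gatex. Finally, for (d) $\Rightarrow$ (a), since $\SM$ is \gatex there is some labeled galled-tree $(N,t)$ with $\SM=\mathcal{S}(N,t)$; as $\SM$ is moreover truly-primitive, Lemma~\ref{lem:primGatex=>eqdGaT} guarantees that every labeled galled-tree explaining $\SM$ --- in particular $(N,t)$ --- is strong, elementary and quasi-discriminating, which is statement (a).

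Since each arrow of the cycle is furnished verbatim by a preceding result, I do not anticipate a genuine technical obstacle here; the only point demanding a little care is to lay the implications out so that the reasoning is not circular, and the ordering above achieves this. If a more symmetric write-up were preferred, one could instead treat (b) as a hub --- using Lemmas~\ref{lem:seqdGaT=>pc} and~\ref{lem:pc=>gatex} together with Observation~\ref{obs:okDef2} for (a) $\Leftrightarrow$ (b), and Lemma~\ref{lem:pc=>primitive}, Observation~\ref{obs:okDef1}, Lemma~\ref{lem:pc=>gatex} and Lemma~\ref{lem:primGatex=>eqdGaT} for (b) $\Leftrightarrow$ (c) and (b) $\Leftrightarrow$ (d) --- but the four-cycle is the shortest route.
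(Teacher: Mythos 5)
Your proposal is correct and follows exactly the same cyclic chain (a)\,$\Rightarrow$\,(b)\,$\Rightarrow$\,(c)\,$\Rightarrow$\,(d)\,$\Rightarrow$\,(a) as the paper, invoking the same four results (Lemma~\ref{lem:seqdGaT=>pc}, Lemma~\ref{lem:pc=>primitive}, Lemma~\ref{lem:pc=>gatex}, and Lemma~\ref{lem:primGatex=>eqdGaT}) at the corresponding steps. The extra citation of Observation~\ref{obs:okDef1} to certify that $\N(v,\SM_1,\SM_2)$ is a galled-tree is a harmless bit of additional explicitness.
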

\begin{proof}
	If $\SM=(X,\Upsilon,\sigma)$ can be explained by a  strong, elementary and quasi-discriminating labeled-galled-tree, 
	then we can apply Lemma~\ref{lem:seqdGaT=>pc} to conclude that $\SM$ is a handsome polar-cat, so (a) implies (b).
	Assume that  $\SM$ is handsome polar-cat. By Lemma \ref{lem:pc=>primitive}, $\SM$ is truly-primitive and, thus, (b) implies (c).
	By Lemma~\ref{lem:pc=>gatex}, $\SM$ is \gatex. Thus, (c) implies (d).
	Finally, by Lemma \ref{lem:primGatex=>eqdGaT}, (d) implies (a), which completes this proof.
\end{proof}

By Theorem \ref{thm:primGatex}, handsome polar-cats form a subclass of truly-primitive strudigrams. 
This  together with Theorem \ref{thm:unp<=>treeExpl} implies 
\begin{corollary}\label{cor:TRS-HPC=>P4T}
	Every truly-primitive strudigram and thus, every handsome polar-cat contains 
	an induced $P_4$ or a rainbow-triangle.
\end{corollary}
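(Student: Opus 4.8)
The plan is to obtain this as an immediate consequence of the equivalences collected in Theorem~\ref{thm:unp<=>treeExpl}, together with Theorem~\ref{thm:primGatex}; no new combinatorial work should be needed. First I would recall that a truly-primitive strudigram $\SM=(X,\Upsilon,\sigma)$ is, by definition, primitive with $|X|\geq 3$. Hence taking $W\coloneqq X$ exhibits a subset $W\subseteq X$ with $3\leq|W|$ for which $\SM[W]=\SM$ is primitive, i.e., condition~(4) of Theorem~\ref{thm:unp<=>treeExpl} fails. By the equivalence of conditions~(4) and~(6) in that theorem, condition~(6) must fail as well, so $\SM$ is not $P_4$-free or not rainbow-triangle-free. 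Unwinding this, $\SM$ contains an induced $P_4$ or a rainbow-triangle, which is the assertion for truly-primitive strudigrams.

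For the part concerning handsome polar-cats I would simply invoke Theorem~\ref{thm:primGatex}: statement~(b) (being a handsome polar-cat) implies statement~(c) (being a truly-primitive polar-cat), and in particular $\SM$ is then truly-primitive. The first part of the argument applies verbatim, yielding an induced $P_4$ or a rainbow-triangle in every handsome polar-cat.

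I do not expect any real obstacle here — the argument is a one-line deduction from already established characterizations. The only care needed is bookkeeping: one must apply the correct direction of the equivalence in Theorem~\ref{thm:unp<=>treeExpl} (failure of~(4) forces failure of~(6)), and check that the phrasing "there is no truly-primitive substructure" in condition~(4) matches the definition of truly-primitive as "primitive with at least three vertices," so that $W=X$ is indeed an admissible witness when $\SM$ itself is truly-primitive.
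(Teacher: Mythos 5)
Your argument is correct and is essentially identical to the paper's: the paper likewise derives the corollary by noting (via Theorem~\ref{thm:primGatex}) that handsome polar-cats are truly-primitive, and then applying the equivalence of conditions (4) and (6) in Theorem~\ref{thm:unp<=>treeExpl} with $W=X$ as the witness. No gaps.
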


By Theorem \ref{thm:primGatex}, all truly-primitive \gatex strudigrams can be explained by a
(strong, elementary, and quasi-discriminating) galled-trees and are exactly the handsome polar-cats.
Moreover, for each prime module $M$ of some strudigram $\SM$ for which $|M|>1$, Thm.\
\ref{thm:Mprime=>QuotientMtruly-primitive} implies that $\SM' = \SM[M]/\Mmax(\SM[M])$ is
truly-primitive. Hence, we may apply the machinery of pvr-networks in the context of galled-trees.

\begin{lemma}\label{lem:galled-tree-pvr}
	If $\pfam(\SM)$ is a prime-explaining family of a strudigram $\SM$ which contains only galled-trees,
	then the pvr-network $\pvr(\SM,\pfam(\SM))$ is a galled-tree.
\end{lemma}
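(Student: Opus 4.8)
The plan is to verify the two defining conditions of a galled-tree for $N:=\pvr(\SM,\pfam(\SM))$: that $N$ is a phylogenetic network, and that every non-trivial biconnected component of $N$ is a gall. By Lemma~\ref{lem:pvr-well-defined} we already know that $N$ is a labeled network on $X$ with $V(N)=V^0(\MDT)\cup X\cup\bigcup_{M\in\P}V^0(N_M)$, where $\rho_{N_M}$ is identified with $M$ and $L(N_M)$ with $\child_\MDT(M)$. I would obtain both conditions at once by an induction on $|\P|$ that builds $N$ from $\MDT$ by replacing the prime vertices one at a time, processing a prime vertex only after all prime vertices strictly below it in $\preceq_\MDT$ have been replaced.

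For the base case, $\MDT$ itself is a phylogenetic tree: every inner vertex is a strong module $M$ with $|M|>1$, hence has $|\Mmax(\SM[M])|\ge 2$ children; so $\MDT$ is a galled-tree with no non-trivial biconnected component. The invariant to carry is: after replacing a set $\mathcal{Q}\subseteq\P$, the current digraph $G$ is a galled-tree whose non-trivial biconnected components are exactly the galls occurring in the networks $N_{M'}$, $M'\in\mathcal{Q}$. In the inductive step, when $M\in\P\setminus\mathcal{Q}$ is next, no already-replaced $M'\in\mathcal{Q}$ is an ancestor of $M$ (by the processing order, ancestors are replaced last) and no child of such an $M'$ equals $M$ (that would make $M'$ an ancestor of $M$); hence $M$ still carries its original $\MDT$-in-edge, so $M$ is a tree-vertex of $G$, and $M$ is not a vertex of any gall of $G$, every gall sitting inside some $N_{M'}$, $M'\in\mathcal{Q}$, whose vertex set avoids $M$. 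Deleting the out-edges of $M$ and grafting in $N_M$ — identifying $\rho_{N_M}$ with $M$ and $L(N_M)$ with $\child_\MDT(M)$ — leaves all existing galls untouched (each sits strictly below a child of $M$), adds exactly the galls of the galled-tree $N_M$, and preserves acyclicity (the children of $M$ are sinks of $N_M$, so the subtrees hanging below them cannot re-enter $N_M$) and the unique-root property; the children of $M$ act as cut-vertices keeping the new galls of $N_M$ from merging with the old ones. It remains to see that $G$ stays phylogenetic, i.e.\ that no vertex acquires out-degree $1$ and in-degree $\le 1$: the interior vertices of $N_M$ inherit their degrees from the phylogenetic network $N_M$; the vertex $M=\rho_{N_M}$ has $\outdeg_{N_M}(\rho_{N_M})>1$ because, by Theorem~\ref{thm:Mprime=>QuotientMtruly-primitive}, $\SM[M]/\Mmax(\SM[M])$ is truly-primitive, so $N_M$ has $\ge 3$ leaves and is a non-trivial phylogenetic network; and each child $c$ of $M$ keeps its out-degree (which is $0$ or $\ge 2$, since $c$ is a leaf of $\MDT$, an inner vertex with $\ge 2$ children, or a prime module already replaced and thus equal to the root of a non-trivial phylogenetic network) while only gaining in-edges. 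This re-establishes the invariant, and after processing all of $\P$ we obtain $N$, a galled-tree whose non-trivial biconnected components are precisely the galls of the networks $N_M\in\pfam(\SM)$.

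The hard part is the bookkeeping in the inductive step: one must justify carefully that the tree-skeleton of $\MDT$ is respected by the intermediate graphs $G$ — concretely, that an already-grafted $N_{M'}$ contains no ancestor of the module currently being processed, and that grafting $N_M$ at its root and its leaves creates no directed cycle and merges no existing gall with a gall of $N_M$ — all of which ultimately rests on the structure recorded in Lemma~\ref{lem:pvr-well-defined}. An alternative that sidesteps the induction is to argue directly that $E(N)$ is the disjoint union of the cycle-free edge set $E(T')$ of the sub-forest $T'$ and the edge sets $E(N_M)$, $M\in\P$; that $N[V(N_M)]=N_M$; and that the images of the prime modules and of their $\MDT$-children are cut-vertices of $N$; from this one deduces that every cycle of the underlying undirected graph of $N$ lies inside a single $N_M$, so every non-trivial biconnected component of $N$ is a non-trivial biconnected component of some $N_M$, i.e.\ a gall. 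In either approach the crux is the same: controlling how the local prime-vertex surgery interacts with the global tree structure of $\MDT$.
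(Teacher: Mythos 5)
Your proposal is correct, and its essential ingredients coincide with the paper's: a degree count to show the result is phylogenetic, and the observation that distinct members of $\pfam(\SM)$ meet in at most one vertex (a root of one identified with a leaf of another) which is a cut vertex, so that non-trivial biconnected components cannot span more than one $N_M$. The difference is organizational. The paper argues directly on the finished graph $N$: it classifies every vertex of $N$ by where it came from ($\MDT$-vertex, root/inner vertex/leaf of some $N_M$), checks the degree condition for each class, and then invokes the shared-vertex-is-a-cut-vertex fact once to conclude that the non-trivial biconnected components of $N$ are exactly those of the individual $N_M$, hence galls. You instead build $N$ by a bottom-up induction on $\P$, carrying the invariant that the intermediate graph is a galled-tree whose galls are those of the already-grafted networks. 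Your induction buys a cleaner treatment of acyclicity and of the claim that no gall of $N_M$ can merge with structure above $M$ (since no ancestor of $M$ has been replaced yet, everything above $M$ is still tree-like), at the cost of the extra bookkeeping you flag; the "alternative that sidesteps the induction" in your last paragraph is precisely the paper's route. In both write-ups the one step left at the level of assertion is the same: that the identified root/leaf vertices really are cut vertices of $N$, which is what localizes every undirected cycle inside a single $N_M$.
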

\begin{proof}
	Let $\pfam(\SM)=\{(N_M,t_M)\mid M\in\P\}$ be a prime-explaining family of a strudigram
	$\SM=(X,\Upsilon,\sigma)$, where $\P$ is the set of prime-labeled vertices in the MDT $(\MDT,\tau)$ of $\SM$.
	Furthermore, put $(N,t)\coloneqq\pvr(\SM,\pfam)$ and assume that $N_M$ is a galled-tree for each
	$M\in\P$. By Lemma~\ref{lem:pvr-well-defined}, $N$ is a network on $X$. 
	
	To verify that $N$ is a galled-tree, we must verify that $N$ is phylogenetic and that every non-trivial biconnected component of $N$ is a gall. 
	Since $\MDT$ is phylogenetic and the in- and out-degrees of non-prime vertices in $\MDT$ remain unaffected when
	constructing $N$, it follows that all  vertices	$v$ in $\MDT$ that do not correspond to a prime module of $\SM$
	do not satisfy $\outdeg_N(v)=\indeg_N(v)=1$. Let us now consider a vertex $v$
	in $\MDT$ that corresponds to a prime module $M'$ of $\SM$. 
	In this case, $v$ is now the root of $N_{M'}$ and thus, $\outdeg_N(v)=\outdeg_{N_{M'}}(\rho_{N_{M'}})>1$ must hold. 	
	Moreover, since $N_{M'}$ is phylogenetic, none of its inner vertices $v$ satisfy in $N$: $\outdeg_N(v)=\indeg_N(v)=1$.
	If, on the other hand, $v$ is a leaf in $N_{M'}$, then it corresponds to some vertex in $\MDT$ which, by the latter arguments, 
	does not satisfy $\outdeg_N(v)=\indeg_N(v)=1$. In summary, $N$ is phylogenetic.
	
	Moreover, for any two $M',M\in\P$, the networks $N_M$ and $N_{M'}$ can, by construction, share at most one vertex $v$, 
	namely the root of one of these networks and a leaf of the other network. It follows that, in the latter case, 
	$v$ is a cut vertex. This immediately implies that $C$ is a non-trivial biconnected component in $N$ if and only if $C$ is a 
	non-trivial biconnected component in $N_M$ for precisely one $M\in \P$. This and the fact that all
	non-trivial biconnected components in each of the $N_M$ are galls, implies that all
	non-trivial biconnected component in $N$ are galls. Taking the latter arguments together, 
	$N$ is a galled-tree.
\end{proof}

\begin{figure}
	\centering
	\includegraphics[width=0.8\textwidth]{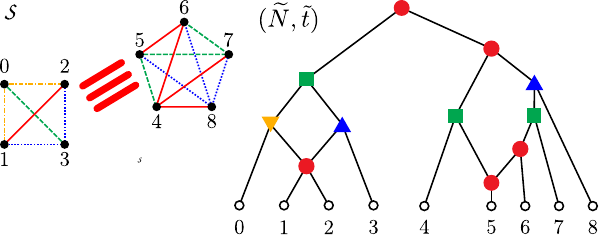}
	\caption{Shown is a labeled network $(\tilde N, \tilde t)$ that explains the strudigram $\SM$ as shown in Fig.~\ref{fig:exmapl1-pvr}. 
					 $(\tilde N, \tilde t)$ is the pvr-network obtained from the MDT of $\SM$ 
					 by replacing the prime vertices by the network $(N',t')$ as shown in Fig.~\ref{fig:exmapl1-pvr}
					 and the network $(N,t)$ as shown in Fig.~\ref{fig:defNpc}. Since $(\tilde N, \tilde t)$  is a galled-tree, 
					 $\SM$ is \gatex. 
				}
	\label{fig:gatex_pvr_8}
\end{figure}

As it turns out, \gatex strudigrams are precisely those strudigrams for which particular quotients are handsome polar-cats. 

\begin{definition}\label{def:primecat}
\PrimeCat denotes the set of all strudigrams $\SM$ for which $\SM[M]/\Mmax(\SM[M])$ is a handsome polar-cat
for every prime module $M$ of $\SM$ with $|M|>1$. 
\end{definition}

\begin{theorem}\label{thm:CharprimeCat}
The following statements are equivalent for every strudigram $\SM$
\begin{enumerate}[label=(\alph*)]
	\item $\SM$ is \textup{\gatex}
	\item $\SM[W]$ is \textup{\gatex} for all non-empty subsets $W\subseteq V(\SM)$.
	\item $\SM\in\PrimeCat$
\end{enumerate}
\end{theorem}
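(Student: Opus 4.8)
The plan is to prove the cycle of implications $(b)\Rightarrow(c)\Rightarrow(a)\Rightarrow(b)$. The implication $(a)\Rightarrow(b)$ --- that the class of \gatex strudigrams is closed under taking induced substrudigrams --- is the conceptual core, while the other two implications are obtained by assembling the prime-vertex-replacement machinery of Section~\ref{sec:pvr} with the equivalences of Theorem~\ref{thm:primGatex}. Throughout one uses the trivial fact that being \gatex is invariant under cr-isomorphism, since an isomorphism of strudigrams may be used to relabel the leaves of an explaining galled-tree.

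For $(a)\Rightarrow(b)$ I would first reduce, by induction on $|V(\SM)\setminus W|$, to the case $W=V(\SM)\setminus\{x\}$ for a single vertex $x$. If $(N,t)$ is a labeled galled-tree explaining $\SM$, then by Theorem~\ref{thm:galled-no3overlap} the clustering system $\mathfrak{C}_N$ is closed and satisfies (L) and (N3O); Lemma~\ref{lem:hereditary-cluster} transfers all three properties to $\mathfrak{C}_N-x$, so $\Hasse(\mathfrak{C}_N-x)$ is again a galled-tree by Theorem~\ref{thm:galled-no3overlap}. Since every galled-tree is an lca-network (Lemma~\ref{lem:lev1-klca}), Proposition~\ref{prop:lcaN-explain} then supplies a labeling $t'$ of $G\doteq\Hasse(\mathfrak{C}_N-x)$ such that $(G,t')$ explains $\SM[W]$; hence $\SM[W]$ is \gatex.

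For $(b)\Rightarrow(c)$, let $M$ be a prime module of $\SM$ with $|M|>1$. By Observation~\ref{obs:quotient}, the quotient $\SM[M]/\Mmax(\SM[M])$ is cr-isomorphic to $\SM[W']$ for some transversal $W'\subseteq M$ of $\Mmax(\SM[M])$; since $W'\subseteq V(\SM)$, assumption $(b)$ and cr-invariance yield that $\SM[M]/\Mmax(\SM[M])$ is \gatex. By Theorem~\ref{thm:Mprime=>QuotientMtruly-primitive} this quotient is truly-primitive, so by the equivalence $(d)\Leftrightarrow(b)$ of Theorem~\ref{thm:primGatex} it is a handsome polar-cat; as $M$ was arbitrary, $\SM\in\PrimeCat$ in the sense of Definition~\ref{def:primecat}. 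For $(c)\Rightarrow(a)$, let $\P$ be the set of prime vertices of the modular decomposition tree $(\MDT,\tau)$ of $\SM$, i.e., the prime modules $M$ with $|M|>1$. For each $M\in\P$, the quotient $\SM[M]/\Mmax(\SM[M])$ is a handsome polar-cat by hypothesis, hence by Theorem~\ref{thm:primGatex}~$(b)\Rightarrow(a)$ it is explained by a galled-tree; relabelling internal vertices, we may take these galled-trees pairwise internal-vertex-disjoint, which produces a prime-explaining family $\pfam(\SM)$ of $\SM$ consisting only of galled-trees. By Proposition~\ref{prop:pvr-explains-S} the network $\pvr(\SM,\pfam(\SM))$ explains $\SM$, and by Lemma~\ref{lem:galled-tree-pvr} it is a galled-tree; thus $\SM$ is \gatex, which closes the cycle.

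The step I expect to demand the most care is $(a)\Rightarrow(b)$: one must verify that the induction on $|V(\SM)\setminus W|$ is legitimate and, more importantly, that the labeled Hasse diagram furnished by Proposition~\ref{prop:lcaN-explain} is genuinely a galled-tree and not merely an lca-network --- precisely the point where the heredity of ``closed, (L), (N3O)'' (Lemma~\ref{lem:hereditary-cluster}) combines with the clustering-system characterisation of galled-trees (Theorem~\ref{thm:galled-no3overlap}). The implications $(b)\Rightarrow(c)$ and $(c)\Rightarrow(a)$ are then essentially bookkeeping on top of Theorem~\ref{thm:primGatex} and the pvr results of Section~\ref{sec:pvr}.
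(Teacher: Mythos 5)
Your proposal is correct and follows essentially the same route as the paper: (a)$\Rightarrow$(b) via Theorem~\ref{thm:galled-no3overlap}, Lemma~\ref{lem:hereditary-cluster} (applied vertex by vertex, exactly as the paper's ``repeated application''), Lemma~\ref{lem:lev1-klca} and Proposition~\ref{prop:lcaN-explain}; (b)$\Rightarrow$(c) via Observation~\ref{obs:quotient}, Theorem~\ref{thm:Mprime=>QuotientMtruly-primitive} and Theorem~\ref{thm:primGatex}; and (c)$\Rightarrow$(a) via the pvr-machinery (Proposition~\ref{prop:pvr-explains-S} and Lemma~\ref{lem:galled-tree-pvr}). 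The only cosmetic difference is the starting point of the implication cycle, and your explicit remark about making the galled-trees in the prime-explaining family internal-vertex-disjoint is a harmless (indeed welcome) elaboration of a point the paper leaves implicit.
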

\begin{proof}
	Let $\SM = (X,\Upsilon,\sigma)$ be a strudigram. To prove that (a) implies (b), assume that $\SM$ is explained
	by a labeled galled-tree $(N,t)$ on $X$, and let $W \subseteq X$ be a non-empty subset. 
	Put $\mathfrak{C}'\coloneqq \{C \mid C\in \mathfrak{C}_N \text{ and } C \subseteq  W\}$.
	Since 	$N$ is a galled-tree, Theorem \ref{thm:galled-no3overlap} implies that 
	$\mathfrak{C}$ must be closed, and satisfies (L) and (N3O). Repeated application of Lemma \ref{lem:hereditary-cluster}
	to all $x\in X\setminus W$ shows that 
	$\mathfrak{C}'$ must be closed and satisfies (L) and (N3O).
	Moreover,  Theorem \ref{thm:galled-no3overlap} implies
	that $G\doteq \Hasse(\mathfrak{C}')$ is a galled-tree on $W$.
	By Lemma \ref{lem:lev1-klca}, $N$ is an lca-network. This together with Prop.~\ref{prop:lcaN-explain}
	implies that $G$ can be equipped with a labeling $t'$ such that  $(G,t')$ explains $\SM[W]$. 
	In summary, $S[W]$ is \gatex. Hence, (a) implies (b).

	To show that (b) implies (c), assume that $\SM[W]$ is \gatex for every non-empty subset $W\subseteq X$.
	If $\SM$  does not contain any prime module $M$ with $|M|>1$, then $\SM\in\PrimeCat$ is vacuously true.
	Assume that $\SM$ contains a prime module $M$ with $|M|>1$. Consider the quotient $\SM'\coloneqq
	\SM[M]/\Mmax(\SM[M])$ with $\Mmax(\SM[M]) = \{M_1 , \dots , M_k\}$. By Obs.\ \ref{obs:quotient},
	$\SM'\simeq \SM[W]$ with $W\subseteq M$ such that for all $ i \in \{1, \dots,k\}$ we have $|M_i
	\cap W | = 1$. By assumption, $\SM[W]$ is \gatex and, therefore,  $\SM'$ is \gatex.
	Since $M$ is a prime module, Thm.\ \ref{thm:Mprime=>QuotientMtruly-primitive} implies that
	$\SM'$ is truly-primitive. The latter two arguments together with Theorem~\ref{thm:primGatex}
	imply that $\SM'$ is a handsome polar-cat. Since the latter arguments hold for every prime
	module $M$ of $\SM$ with $|M|>1$, we have $\SM\in\PrimeCat$ i.e. (b) implies (c).

	Finally, assume that $\SM\in\PrimeCat$. Applying Theorem~\ref{thm:primGatex} ensures the
	existence of a (strong, elementary and quasi-discriminating) galled-tree $(N_M,t_M)$ that
	explains $\SM[M]/\Mmax(\SM[M]$) for every prime module $M$ of $\SM$ with $|M|>1$. Hence there
	exist a prime-explaining family $\pfam$ of $\SM$ containing only galled-trees. The pvr-network
	$(N,t)\coloneqq\pvr(\SM,\pfam)$ is a galled-tree by Lemma~\ref{lem:galled-tree-pvr} and, by
	Proposition~\ref{prop:pvr-explains-S}, $(N,t)$ explains $\SM$. Hence (c) implies (a), which
	completes the proof.
\end{proof}

In contrast to Prop \ref{prop:SM-hasExpl}, we obtain
\begin{corollary}
Every \textup{\gatex} strudigram  $\SM = (X,\Upsilon,\sigma)$ can be explained by a  network $(N,t)$ on $X$ with
$O(|X|)$ vertices.
\end{corollary}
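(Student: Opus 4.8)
The plan is to exhibit the explaining network explicitly as a prime-vertex replacement of the modular decomposition tree $(\MDT,\tau)$ of $\SM$, using the small canonical galled-trees $\N(\cdot)$ from Definition~\ref{def:N(pc)} as gadgets, and then to bound its number of vertices by a telescoping argument over $\MDT$.

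Since $\SM$ is \gatex, Theorem~\ref{thm:CharprimeCat} yields $\SM\in\PrimeCat$; that is, for every prime module $M$ of $\SM$ with $|M|>1$ (these are precisely the elements of the set $\P$ of prime vertices of $\MDT$), the quotient $\SM_M\coloneqq\SM[M]/\Mmax(\SM[M])$ is a handsome polar-cat. For each $M\in\P$, fix a presentation of $\SM_M$ as a $\join_{k_M}$-$(v_M,(\SM_M)_1,(\SM_M)_2)$-polar-cat and put $(N_M,t_M)\coloneqq(\N(v_M,(\SM_M)_1,(\SM_M)_2),\,t(v_M,(\SM_M)_1,(\SM_M)_2))$; after relabelling internal vertices if necessary, these are pairwise internal vertex-disjoint, so $\pfam(\SM)\coloneqq\{(N_M,t_M)\mid M\in\P\}$ is a prime-explaining family of $\SM$ consisting of galled-trees, each $(N_M,t_M)$ explaining $\SM_M$ by Lemma~\ref{lem:pc=>gatex}. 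Put $(N,t)\coloneqq\pvr(\SM,\pfam(\SM))$. By Lemma~\ref{lem:galled-tree-pvr}, $N$ is a galled-tree, and by Proposition~\ref{prop:pvr-explains-S}, $(N,t)$ explains $\SM$; in particular $(N,t)$ is a labeled network on $X$.

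It remains to count vertices. Recall that $\MDstrong(\SM)$, and hence $\MDT$, has $O(|X|)$ vertices, and that the leaves of $\MDT$ are exactly the $|X|$ singletons, so $V(\MDT)=V^0(\MDT)\cup X$ with $|X|$ leaves. Fix $M\in\P$. The two caterpillar trees $T_1,T_2$ from which $N_M=\N(v_M,(\SM_M)_1,(\SM_M)_2)$ is built have leaf sets $W_1,W_2$ with $W_1\cup W_2=\Mmax(\SM[M])=\child_\MDT(M)$ and $W_1\cap W_2=\{v_M\}$, hence $|W_1|+|W_2|=|\child_\MDT(M)|+1$; since a caterpillar tree on $n$ leaves has exactly $n-1$ inner vertices, $N_M$ has $(|W_1|-1)+(|W_2|-1)+2=|\child_\MDT(M)|+1$ inner vertices, the extra two being the root and the hybrid of its single gall. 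In the construction of $\pvr(\SM,\pfam(\SM))$, the root $\rho_{N_M}$ is identified with $M\in V^0(\MDT)$ and the leaves of $N_M$ with $\child_\MDT(M)\subseteq V(\MDT)$ (singletons among them being relabelled into elements of $X$), so $N_M$ introduces only $|V^0(N_M)|-1=|\child_\MDT(M)|$ new vertices. As in the proof of Lemma~\ref{lem:pvr-well-defined}, $V(N)=V^0(\MDT)\cup X\cup\bigcup_{M\in\P}V^0(N_M)$, and the newly introduced vertices are pairwise disjoint and disjoint from $V(\MDT)$, so
\[
|V(N)| \;=\; |V(\MDT)| + \sum_{M\in\P}|\child_\MDT(M)| \;\le\; |V(\MDT)| + \sum_{v\in V^0(\MDT)}|\child_\MDT(v)| \;=\; 2\,|V(\MDT)| - 1 ,
\]
where the last equality holds because $\MDT$ is a tree, so the sum of its out-degrees equals $|V(\MDT)|-1$. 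Since $|V(\MDT)|\in O(|X|)$, we conclude $|V(N)|\in O(|X|)$, and $(N,t)$ is the desired network.

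The argument is essentially bookkeeping; the only point requiring attention is that the gadget $N_M$ must be taken to be the explicit small network $\N(\cdot)$ — whose inner-vertex count is governed by $|\child_\MDT(M)|$ — rather than an arbitrary galled-tree explaining $\SM_M$, and that the per-module contributions sum, over all of $\P$, to at most the number of edges of $\MDT$. No genuine obstacle arises.
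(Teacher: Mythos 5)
Your proof is correct, but it takes a genuinely different route from the paper. The paper's proof is two lines: since $\SM$ is \gatex{} it is explained by some galled-tree $(N,t)$ on $X$, and then it invokes the known bound $|V(N)|\leq 4|X|-3$ for level-1 networks from \cite[Prop.\ 1]{CRV:09}. You instead make the explaining network explicit -- the pvr-network built from the canonical elementary gadgets $\N(v_M,(\SM_M)_1,(\SM_M)_2)$ -- and count its vertices by hand: each gadget contributes $|\child_{\MDT}(M)|$ new inner vertices (the two caterpillars have $|W_1|+|W_2|-2$ inner vertices plus the new root and hybrid, minus the root identified with $M$), and summing over the prime modules is dominated by the edge count of $\MDT$, giving $|V(N)|\leq 2|V(\MDT)|-1\leq 4|X|-3$. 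Your bookkeeping checks out (including the subtraction for the identified root and the fact that the gadget leaves are identified with vertices already counted in $V(\MDT)$), and it is pleasant that you recover exactly the same constant as the cited result. What your approach buys is self-containedness -- no appeal to \cite{CRV:09} -- and an explicit witness network; what the paper's approach buys is brevity and the fact that the bound applies to \emph{every} galled-tree explaining $\SM$, not just the particular pvr-construction. The only stylistic caveat is that you implicitly use that every inner vertex of $\MDT$ has out-degree at least two to get $|V(\MDT)|\leq 2|X|-1$; this is true (and is what the paper means by $|\MDstrong(\SM)|\in O(|X|)$) but is worth stating when you turn $|V(\MDT)|\in O(|X|)$ into a concrete constant.
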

\begin{proof}
Since  $\SM = (X,\Upsilon,\sigma)$ is \gatex, it can be explained by a galled-tree $(N,t)$ on $X$. 
By \cite[Prop.\ 1]{CRV:09}, $|V(N)| \leq 4|X | - 3\in O(|X|)$. 
\end{proof}


\section{Algorithmic Aspects}
\label{sec:algo}

In this section, we show that it can be verified whether $\SM$ is \gatex and, in the affirmative, 
that a labeled galled-tree that explains $\SM$ can be constructed in polynomial time.
Aided by Theorem~\ref{thm:CharprimeCat}, recognition of \gatex strudigrams is, in essence, solved by
recognition of polar-cats. For the latter, we first provide two further structural results.

\begin{lemma}\label{lem:handsome=>P4/RT}
	Let $\SM$ be a handsome polar-cat. Then, for every vertex $v$ for which 
	$\SM$ is a $(v,\SM_1,\SM_2)$-polar-cat it holds that
	$v$ is located on every induced $P_4$
	and every rainbow-triangle that exists in $\SM$.
\end{lemma}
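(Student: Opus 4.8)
The plan is to reduce everything to the single claim that the induced substrudigram $\SM-v$ is both $P_4$-free and rainbow-triangle-free. Indeed, once this is established, any induced $P_4$ or rainbow-triangle of $\SM$ that avoids $v$ would lie entirely in $X\setminus\{v\}$ and hence be an induced $P_4$ resp.\ rainbow-triangle of $\SM[X\setminus\{v\}]=\SM-v$, which is impossible; so $v$ must be located on every induced $P_4$ and every rainbow-triangle of $\SM$.

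To set up, I would exploit the polar-cat structure directly, without invoking the galled-tree machinery. Since $\SM=(X,\Upsilon,\sigma)$ is a $\join_k$-$(v,\SM_1,\SM_2)$-polar-cat, Definition~\ref{def:polarcat}(i) gives $\SM-v=(\SM_1-v)\join_k(\SM_2-v)$ for some $k\in\Upsilon$. Writing $W_i\coloneqq V(\SM_i)\setminus\{v\}$, the set $\{W_1,W_2\}$ is a partition of $X\setminus\{v\}$ into nonempty parts (each $\SM_i$ contains $v$ together with the other leaf of the cherry of $T_i$, so $|W_i|\geq 1$), and $\SM_i-v=\SM[W_i]$ because $\SM_i$ is an induced substrudigram of $\SM$. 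By Definition~\ref{def:polarcat}(ii), each $\SM_i$ is explained by a labeled caterpillar tree, so by Theorem~\ref{thm:char-treeEx} each $\SM_i$ is $P_4$-free and rainbow-triangle-free; being induced substructures, $\SM[W_1]$ and $\SM[W_2]$ inherit both properties.

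The core of the argument is then two elementary facts about the operation $\join_k$: (i) a $k$-join of two $P_4$-free strudigrams is $P_4$-free, and (ii) a $k$-join of two rainbow-triangle-free strudigrams is rainbow-triangle-free. For (ii): if a rainbow-triangle $abc$ of $\SM[W_1]\join_k\SM[W_2]$ had all three vertices in one part, that part would already contain a rainbow-triangle; otherwise two of $a,b,c$ lie in the same part, forcing the two pairs joining them to the third vertex both to receive colour $k$, contradicting that the three pairs are pairwise differently coloured. For (i): an induced $P_4$ $abcd$ has exactly the three ``path pairs'' $\{ab,bc,cd\}$ coloured with a single value $\ell$ and the three ``diagonal pairs'' $\{ac,ad,bd\}$ coloured differently from $\ell$. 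If the four vertices straddle the two parts, every cross pair gets colour $k$; when $k\neq\ell$ no cross pair is a path pair, so the connected path $a\!-\!b\!-\!c\!-\!d$ must lie in a single part; when $k=\ell$ every cross pair is a path pair, but the path graph on $\{a,b,c,d\}$ has maximum degree $2$ and contains no $K_{2,2}$, so it cannot contain all cross pairs of a (size $1{+}3$ or $2{+}2$) bipartition of $\{a,b,c,d\}$. Either way all four vertices lie in one part, which would then contain an induced $P_4$; contradiction. Combining, $\SM-v=\SM[W_1]\join_k\SM[W_2]$ is $P_4$-free and rainbow-triangle-free, and the proof is complete.

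The only slightly delicate point is the finite case check inside fact (i) — ruling out that some nontrivial bipartition of the four vertices of a $P_4$ has all its cross pairs among the three path pairs — which I would either dispatch with the degree/$K_{2,2}$ observation above or simply enumerate the six bipartitions of $\{a,b,c,d\}$; everything else is routine. I note in passing that the argument does not actually use the ``handsome'' hypothesis (for a $3$-vertex non-rainbow-triangle polar-cat there are no induced $P_4$s or rainbow-triangles and the statement is vacuous), but it of course covers the handsome case that is asked for.
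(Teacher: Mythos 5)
Your proof is correct, but it takes a genuinely different route from the paper's. Both arguments reduce the lemma to the same key claim, namely that $\SM-v$ is $P_4$-free and rainbow-triangle-free; the difference lies in how that claim is established. The paper goes through the network machinery: it invokes Lemmas~\ref{lem:pc=>gatex} and \ref{lem:primGatex=>eqdGaT} to get the leaf-separated galled-tree $(\N(v,\SM_1,\SM_2), t(v,\SM_1,\SM_2))$ explaining $\SM$, observes that any two overlapping clusters of this network intersect exactly in $\{v\}$, concludes that $\mathfrak{C}_N-v$ is a hierarchy whose Hasse diagram is a tree, transfers the labeling via Proposition~\ref{prop:lcaN-explain}, and then applies Theorem~\ref{thm:unp<=>treeExpl} to deduce that the tree-explainable strudigram $\SM-v$ has no $P_4$ and no rainbow-triangle. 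You instead stay entirely at the level of the strudigram: from Definition~\ref{def:polarcat}(i) you have $\SM-v=(\SM_1-v)\join_k(\SM_2-v)$ with both parts $P_4$- and rainbow-triangle-free (via Theorem~\ref{thm:char-treeEx} applied to the caterpillar explanations), and you prove two closure facts for $\join_k$. I checked both: the rainbow-triangle case is immediate (two of the three vertices share a part, so two pairs get colour $k$), and your $P_4$ case analysis is sound --- for $k\neq\ell$ no cross pair can be a path pair, forcing all four vertices into one part, and for $k=\ell$ no cross pair can be a diagonal pair, which by your degree/counting observation (the cross pairs of any nontrivial bipartition of four vertices form a $K_{1,3}$ or $K_{2,2}$, neither of which embeds into the three edges of a path) again forces all four vertices into one part. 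Your approach is more elementary and self-contained: it avoids Lemma~\ref{lem:primGatex=>eqdGaT}, Proposition~\ref{prop:lcaN-explain}, and the clustering-system results imported from the literature, at the cost of a small finite case check. The paper's approach buys reuse of machinery already developed (and needed elsewhere) and yields the slightly stronger intermediate statement that $\SM-v$ is explained by a labeled \emph{tree}, though by Theorem~\ref{thm:unp<=>treeExpl} the two formulations are equivalent. Your closing remark that the ``handsome'' hypothesis is not actually needed is also accurate: the paper only uses it (via Corollary~\ref{cor:TRS-HPC=>P4T}) to note that some $P_4$ or rainbow-triangle exists, which is irrelevant to the universally quantified conclusion.
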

\begin{proof}
Let  $\SM=(X,\Upsilon,\sigma)$ be a handsome polar-cat. 
By Cor.\ \ref{cor:TRS-HPC=>P4T}, $\SM$ contains an induced  $P_4$ or rainbow-triangle. 
Since $\SM$ is a polar-cat it is, by definition, in particular a $(v,\SM_1,\SM_2)$-polar-cat for some 
$v\in V(\SM)$. We show that $v$ is located on any $P_4$ and any rainbow-triangle that may exists in $\SM$.
By Lemma \ref{lem:pc=>gatex} and \ref{lem:primGatex=>eqdGaT}, 
$\SM$ is explained by the strong, elementary, quasi-discriminating and leaf-separated network 
$(N,t)\coloneqq (\N(v,\SM_1,\SM_2), t(v,\SM_1,\SM_2))$. 

By construction of $N=\N(v,\SM_1,\SM_2)$, $v$ is the single
child of the unique hybrid vertex $\eta$ in $N$.  
In particular $v$ is a leaf in $N$. 
Note that, by  \cite[L.\ 48]{Hellmuth2023}, if $L(N(u))$ and $L(N(w))$ overlap for some $u,w\in V(N)$, then $L(N(u)) \cap L(N(w)) = L(N(\eta))=\{v\}$. 
Thus, $\mathfrak{C}'\coloneqq \mathfrak{C}_N-v$ does not contain any overlapping clusters and is, therefore, 
a hierarchy. By \cite[Cor.\ 9]{Hellmuth2023}, $G\doteq \hasse(\mathfrak{C}')$ is a phylogenetic tree on $W \coloneqq X\setminus \{v\}$. 
By Proposition~\ref{prop:lcaN-explain}, 
$G$ can be equipped with a labeling $t'$ such that $(G,t')$ explains $\SM-v = S[W]$.
Hence, $\SM-v$ is explained by a labeled tree.
This together with Theorem \ref{thm:unp<=>treeExpl} implies that $\SM-v$ is $P_4$-free and rainbow-triangle-free. 
Hence, $v$ must be located on every $P_4$ and every rainbow-triangle that may exists in $\SM$.
\end{proof}

\begin{lemma}\label{lem:sets12same}
	Let $\SM$ be a $(v,\SM_1,\SM_2)$-polar-cat. Then, $\Mmax(\SM-v)=\{V(\SM_1-v),V(\SM_2-v)\}$. If $\SM$, in addition, is a
	$(v,\TM_1,\TM_2)$-polar-cat, then $\{\SM_1,\SM_2\}  = \{\TM_1,\TM_2\}$.
\end{lemma}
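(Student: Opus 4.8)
The plan is to pin down $\Mmax(\SM-v)$ by combining Definition~\ref{def:polarcat}(i) with the uniqueness clause of Theorem~\ref{thm:kseries=k'series}. Since $\SM$ is a $\join_k$-$(v,\SM_1,\SM_2)$-polar-cat, we have $\SM-v=(\SM_1-v)\join_k(\SM_2-v)$, so $\SM-v$ is $k$-series and, in particular, not prime. Theorem~\ref{thm:kseries=k'series} then yields a \emph{unique} color $k'$ and a \emph{unique} partition $\{U_1,\dots,U_\ell\}$ of $X\setminus\{v\}$ into $k'$-prime blocks whose $k'$-join is $\SM-v$, and in that case $\Mmax(\SM-v)=\{U_1,\dots,U_\ell\}$. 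Writing $W_i\coloneqq V(\SM_i-v)=V(\SM_i)\setminus\{v\}$ (nonempty, since $v$ is part of the cherry of $T_i$, hence $|V(\SM_i)|\geq 2$), the pair $\{W_1,W_2\}$ together with $k$ realizes exactly the situation of the theorem once I show that each $\SM[W_i]=\SM_i-v$ is $k$-prime; uniqueness then forces $\Mmax(\SM-v)=\{W_1,W_2\}=\{V(\SM_1-v),V(\SM_2-v)\}$, which is the first claim.

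The heart of the argument — and the step I expect to be the main obstacle — is thus to prove that $\SM_i-v$ is $k$-prime. Here I would invoke Definition~\ref{def:polarcat}(ii): $\SM_i$ is explained by a discriminating caterpillar $(T_i,t_i)$ in which $v$ is part of the cherry and $t_i(\rho_{T_i})\neq k$. Deleting the leaf $v$ and suppressing the resulting degree-two vertex (if any) turns $T_i$ into a discriminating caterpillar $T_i-v$ that explains $\SM_i-v$, and, crucially, whenever $T_i-v$ has more than one leaf its root is still $\rho_{T_i}$, carrying the label $t_i(\rho_{T_i})\neq k$. Now suppose $\SM_i-v=\SM[A]\join_k\SM[B]$ for some partition $\{A,B\}$ of $W_i$, so $\sigma(ab)=k$ for all $a\in A$, $b\in B$. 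If $|W_i|=1$ no such partition exists; otherwise, pick a leaf-child $x$ of the root of $T_i-v$ (a caterpillar root always has one). For every other leaf $y$ we have $\lca_{T_i-v}(x,y)=\rho_{T_i}$, hence $\sigma(xy)=t_i(\rho_{T_i})\neq k$, so $x$ and $y$ lie in the same block of $\{A,B\}$; thus all leaves fall into a single block, contradicting that $\{A,B\}$ is a partition into nonempty sets. Therefore $\SM_i-v$ is $k$-prime. I would treat the degenerate cases $|W_i|=1$ and ``$T_i-v$ is a cherry'' explicitly to be safe.

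The second statement then follows almost immediately. If $\SM$ is, in addition, a $(v,\TM_1,\TM_2)$-polar-cat, apply the first statement to both decompositions to obtain $\{V(\SM_1-v),V(\SM_2-v)\}=\Mmax(\SM-v)=\{V(\TM_1-v),V(\TM_2-v)\}$. Since both sides are two-block partitions of $X\setminus\{v\}$, after possibly swapping $\TM_1$ and $\TM_2$ we may assume $V(\SM_i-v)=V(\TM_i-v)$ for $i\in\{1,2\}$. Because $v\in V(\SM_i)\cap V(\TM_i)$, restoring $v$ gives $V(\SM_i)=V(\TM_i)$; and since $\SM_i$ and $\TM_i$ are both induced substrudigrams of $\SM$ on the same vertex set, they coincide. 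Hence $\{\SM_1,\SM_2\}=\{\TM_1,\TM_2\}$.
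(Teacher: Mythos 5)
Your proposal is correct and follows essentially the same route as the paper: reduce the first claim to showing each $\SM_i-v$ is $k$-prime via the uniqueness clause of Theorem~\ref{thm:kseries=k'series}, establish $k$-primeness by exhibiting a leaf-child $x$ of the caterpillar root with $\sigma(xy)=t_i(\rho_{T_i})\neq k$ for all other leaves $y$ (the paper argues directly in $T_i$ rather than in $T_i-v$, a purely cosmetic difference), and then derive the second claim from the uniqueness of $\Mmax(\SM-v)$ together with the fact that all four substrudigrams are induced and contain $v$.
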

\begin{proof}
Assume that $\SM$ is a $(v,\SM_1,\SM_2)$-polar-cat. Note that, by definition of polar-cats, we have
$\SM-v=(\SM_1-v)\join_k(\SM_2-v)$ for some $k$. This, in particular, implies that 
$\SM'\coloneqq\SM-v$ is not a prime strudigram. By Theorem~\ref{thm:kseries=k'series}, the set $\Mmax(\SM')$ is a unique partition
of $V(\SM')$ such that $\SM'[M]$ is $k$-prime for each $M\in\Mmax(\SM')$. It is clear that
$\{V(\SM_1-v),V(\SM_2-v)\}$ is a partition of $V(\SM')=V(\SM)\setminus\{v\}$, and we proceed by
showing that $\SM_i-v$ is $k$-prime for $i=1,2$.

By definition, there is some discriminating caterpillar tree $(T,t)$ that explains $\SM_1$, such
that $t(\rho_{T})\neq k$. Since $T$ is a caterpillar tree, it has at least two leafs and $\rho_{T}$
has at least two children. If $\SM_1$ has only two vertices, then $\SM_1-v$ has precisely one vertex
and is vacuously $k$-prime. If, instead, $|V(\SM_1)|\geq3$, then $v$ is not the leaf-child of
$\rho_T$, since $v$ is part of the cherry of $T$. Thus, the leaf-child $x\neq v$ of $\rho_T$
satisfies $\sigma|_{V(\SM_1)}(xy)=t(\rho_T)\neq k$ for all $y\in V(\SM_1)\setminus\{x\}$.
Hence $\SM_1-v$ is not the $k$-join
of any two substrudigrams, and $\SM_1-v$ is $k$-prime. Similarly, $\SM_2-v$ is $k$-prime. In
conclusion, $\Mmax(\SM-v)=\{V(\SM_1-v),V(\SM_2-v)\}$.

Suppose now that there are strudigrams $\TM_1$ and $\TM_2$ such that $\SM$ is also 
$(v,\TM_1,\TM_2)$-polar-cat. By analogous reasoning, we
have $\Mmax(\SM-v)=\{V(\TM_1-v),V(\TM_2-v)\}$. Since $\Mmax(\SM-v)$ is uniquely determined, 
it holds that $\{\SM_1-v,\SM_2-v\}=\{\TM_1-v,\TM_2-v\}$. Since, in addition, 
$\SM_1, \SM_2, \TM_1$ and $\TM_2$ contain $v$ and are induced substrudigrams of $\SM$, it follows that 
$\{\SM_1,\SM_2\}=\{\TM_1,\TM_2\}$.
\end{proof}

To recognize polar-cats, we provide \texttt{Check\_polar-cat} in Algorithm~\ref{alg:checkPC}, and show its correctness.

\begin{algorithm}[tb] 
	\small 
	\caption{\texttt{Check\_polar-cat}}
	\label{alg:checkPC}
	\begin{algorithmic}[1]
	\Require  A truly-primitive strudigram $\SM$
	\Ensure  A labeled galled-tree $(N,t)$ that explains $\SM$ if $\SM$ is a (handsome) polar-cat, otherwise \texttt{false}
	\State Let $H$ be an induced $P_4$ or a rainbow-triangle in $\SM$ \label{alg:line:H}
	\ForAll{$v\in V(H)$} \label{alg:line:v}
		\State Compute $\SM'=\SM-v$ and the MDT $(\MDT,\tau)$ of $\SM'$.\label{alg:line: comp MDT}
		\If {$\MDT$ does not contain prime-labeled vertices} \label{alg:line: check MDT}
			\If {the root of $\MDT$ has precisely 2 children}	\label{alg:line: check child}
				\State Find $\Mmax(\SM')=\{M_1,M_2\}$ 
				\State Let $\SM_i = \SM[M_i\cup\{v\}]$ for $i=1,2$ \label{alg:line: def substrud}
				\If {$\SM_i$ satisfy Definition~\ref{def:polarcat}(ii) for $i=1,2$} \label{alg:line: check pc}
					\State Construct the galled-tree $\N(v,\SM_1,\SM_2)$ according to Definition~\ref{def:N(pc)}
					\State \Return $\N(v,\SM_1,\SM_2)$ \label{alg:line:ret galledtree}
				\EndIf
			\EndIf	
		\EndIf
	\EndFor 
	\State \Return \texttt{false} \label{alg:line:return-false}
	\end{algorithmic}
	\end{algorithm}

\begin{lemma}\label{lem: pol-check-pc}
	Let $\SM$ be a truly-primitive strudigram. In polynomial time, \textnormal{\texttt{Check\_polar-cat}} correctly verifies if $\SM$
	is a (handsome) polar-cat and, in the affirmative, constructs a labeled galled-tree that explains $\SM$.
\end{lemma}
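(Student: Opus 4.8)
The plan is to verify the correctness of Algorithm~\ref{alg:checkPC} in the two directions implicit in its output specification and then bound its running time; throughout we use that, for a truly-primitive strudigram, ``polar-cat'' and ``handsome polar-cat'' coincide by Theorem~\ref{thm:primGatex}. For soundness, suppose the algorithm returns a labeled galled-tree at line~\ref{alg:line:ret galledtree} for some $v\in V(H)$. Then it passed the tests at lines~\ref{alg:line: check MDT}, \ref{alg:line: check child} and \ref{alg:line: check pc}, so $\SM'\coloneqq\SM-v$ has an MDT $(\MDT,\tau)$ without \primeL-labeled vertices, the root of $\MDT$ has exactly the two children $M_1,M_2$ with $\Mmax(\SM')=\{M_1,M_2\}$, and $\SM_i\coloneqq\SM[M_i\cup\{v\}]$ satisfies Definition~\ref{def:polarcat}(ii) for $i=1,2$. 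Since $\SM$ is truly-primitive, $|V(\SM)|\geq3$, so $\SM'$ has at least two vertices and its MDT-root is a non-leaf; being non-\primeL-labeled with two children it is $k$-series for some (unique) $k$ — namely the $\tau$-label of the root of $\MDT$ — and Theorem~\ref{thm:kseries=k'series} gives $\SM'=\SM'[M_1]\join_k\SM'[M_2]$. As $v\notin M_i$ we have $\SM'[M_i]=\SM[M_i]=\SM_i-v$, hence $\SM-v=(\SM_1-v)\join_k(\SM_2-v)$, and this is exactly the constant against which line~\ref{alg:line: check pc} tested Definition~\ref{def:polarcat}(ii). Thus $\SM$ is a $\join_k$-$(v,\SM_1,\SM_2)$-polar-cat; Lemma~\ref{lem:pc=>gatex} shows $(\N(v,\SM_1,\SM_2),t(v,\SM_1,\SM_2))$ explains $\SM$, Observation~\ref{obs:okDef1} shows it is a galled-tree, and Theorem~\ref{thm:primGatex} shows $\SM$ is handsome. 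In particular, if $\SM$ is not a polar-cat the algorithm never reaches line~\ref{alg:line:ret galledtree} and returns \texttt{false}.

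For completeness, assume $\SM$ is a polar-cat, say a $\join_k$-$(u,\SM_1,\SM_2)$-polar-cat. Since $\SM$ is truly-primitive, Corollary~\ref{cor:TRS-HPC=>P4T} gives an induced $P_4$ or rainbow-triangle $H$, so line~\ref{alg:line:H} succeeds, and by Lemma~\ref{lem:handsome=>P4/RT} the vertex $u$ lies on $H$; thus the loop reaches the iteration $v=u$. The proof of Lemma~\ref{lem:handsome=>P4/RT} shows $\SM-u$ is explained by a labeled tree, so by Theorem~\ref{thm:unp<=>treeExpl} its MDT has no \primeL-labeled vertices and the test at line~\ref{alg:line: check MDT} passes. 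By Lemma~\ref{lem:sets12same}, $\Mmax(\SM-u)=\{V(\SM_1-u),V(\SM_2-u)\}$, a two-element set because each $\SM_i$ has at least two vertices, so the test at line~\ref{alg:line: check child} passes and the algorithm recovers $\{M_1,M_2\}=\{V(\SM_1-u),V(\SM_2-u)\}$; hence $\SM[M_i\cup\{u\}]\in\{\SM_1,\SM_2\}$ for $i=1,2$. As $\SM_1,\SM_2$ satisfy Definition~\ref{def:polarcat}(ii) with respect to $k$ — which by Theorem~\ref{thm:kseries=k'series} is the unique join-constant of $\SM-u$, i.e.\ the $\tau$-label of the root of $\MDT$ — the test at line~\ref{alg:line: check pc} passes, and the algorithm returns a galled-tree, at $v=u$ or at an earlier iteration, which by the soundness argument is then also a valid output. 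In neither case does it return \texttt{false}.

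For the running time, searching for $H$ costs $O(|V(\SM)|^{4})$ by brute force over all $3$- and $4$-element subsets, and the loop runs at most four times. In each iteration, forming $\SM-v$ and the $\SM_i$ is immediate; the modular decomposition tree together with $\tau$ is computable in polynomial time (cf.\ \cite{EHREN1994,Engelfriet1996}); detecting \primeL-labels, counting the children of the root, and reading off $\Mmax$ are linear in the size of the MDT; verifying Definition~\ref{def:polarcat}(ii) for $\SM_i$ amounts to computing the labeled MDT of $\SM_i$ — which in the absence of \primeL-labels is a discriminating tree explaining $\SM_i$ — and checking that it is a caterpillar tree with $v$ in its cherry and with root label $\neq k$, all polynomial; and building $\N(v,\SM_1,\SM_2)$ from the two caterpillar trees is linear. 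Hence the procedure runs in polynomial time. The main obstacle is the completeness argument: one must show that the \emph{single} vertex $v=u$ witnessing the polar-cat structure makes \emph{all} of the nested tests succeed simultaneously, which is precisely where Lemma~\ref{lem:handsome=>P4/RT} (locating $u$ on $H$, and tree-explainability of $\SM-u$) and Lemma~\ref{lem:sets12same} (uniqueness of the bipartition of $\SM-u$, so that the reconstructed $\SM_i$ are the polar-cat witnesses) enter.
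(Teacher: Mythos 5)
Your proof is correct and takes essentially the same route as the paper: both arguments hinge on Lemma~\ref{lem:handsome=>P4/RT} to restrict the candidate pivot vertices to $V(H)$, on Theorem~\ref{thm:unp<=>treeExpl} to justify the prime-vertex test, on Lemma~\ref{lem:sets12same} and Theorem~\ref{thm:kseries=k'series} to recover the unique bipartition and join constant, on the uniqueness of discriminating trees to decide Definition~\ref{def:polarcat}(ii), and on Lemma~\ref{lem:pc=>gatex} for the returned galled-tree. The only difference is presentational — you organize the argument as an explicit soundness/completeness split rather than the paper's line-by-line walkthrough of the algorithm — and that changes nothing of substance.
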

\begin{proof}
	Let $\SM=(X,\Upsilon,\sigma)$ be a truly-primitive strudigram. Since $\SM$ is truly-primitive, we
	know that $n\coloneqq|X|\geq3$. Since Theorem~\ref{thm:CharprimeCat} ensures $\SM$ is a
	$(v,\SM_1,\SM_2)$-polar-cat if and only if it is a handsome $(v,\SM_1,\SM_2)$-polar-cat, it suffices
	to show that \texttt{Check\_polar-cat} returns \texttt{False} if and only if $\SM$ is not a
	polar-cat.
	By Lemma \ref{lem:handsome=>P4/RT}, every vertex $v$ for which $\SM$ is a
	$(v,\SM_1,\SM_2)$-polar-cat must be located on all induced $P_4$ and all rainbow-triangles that
	exist in $\SM$. By Cor.\ \ref{cor:TRS-HPC=>P4T}, at least one induced $P_4$ or rainbow-triangle
	exists in $\SM$ and is detected and denoted by $H$ in Line \ref{alg:line:H}. By the latter
	arguments, if $\SM$ is a polar-cat, then it is a $(v,\SM_1,\SM_2)$-polar-cat for at least one
	vertex $v$ in $H$. All vertices $v$ of $H$ are now examined in the for-loop starting in Line
	\ref{alg:line:v}.

	Observe that $\SM'\coloneqq\SM-v$ can be computed in polynomial time, and that the
	modular decomposition tree $(\MDT,\tau)$ of $\SM'$ can be constructed in $O(|V(\SM')|^2)=O(n^2)$ time
	\cite{EHREN1994}, as done in Line~\ref{alg:line: comp MDT}. For Line~\ref{alg:line: check MDT},
	the set $\P$ of prime vertices of $(\MDT,\tau)$ can be found by a polynomial-time traversal of
	$\MDT$. If $\P\neq\emptyset$, then $\SM'$ has at least one induced $P_4$ or rainbow triangle, by
	Theorem~\ref{thm:unp<=>treeExpl}. Thus $v$ does not lie on all $P_4$s and rainbow triangles of
	$\SM$, so that Lemma~\ref{lem:handsome=>P4/RT} implies that $\SM$ is not a
	$(v,\SM_1,\SM_2)$-polar-cat. The algorithm thus correctly continues to the next vertex of $H$.

	Otherwise, namely if $\P=\emptyset$, we next consider Line~\ref{alg:line: check child}. Since
	$n\geq3$, we have $|V(\SM')|\geq 2$, i.e., the tree $\MDT$ has at least two leafs.  Hence, 
	the root $\rho_\MDT$ has at least two children. By definitions of MDTs, we have
	$\child_\MDT(\rho_\MDT)=\Mmax(\SM')=\{M_1,M_2,\ldots,M_\ell\}$ for $\ell\geq2.$ If $\ell\geq3$,
	then by Lemma~\ref{lem:sets12same}, $\SM'$ is not a $(v,\SM_1,\SM_2)$-polar-cat and
	\texttt{Check\_polar-cat} thus correctly continues to the next vertex of $H$. 
	
	If, instead, a	trivial look-up in $\MDT$ reveals that $\ell=2$, we define $\SM_1$ and $\SM_2$ to be the induced
	substrudigrams $\SM[M_1\cup\{v\}]$ respectively $\SM[M_2\cup\{v\}]$ in Line~\ref{alg:line: def substrud};
	a task that can be done again in polynomial time.
	By	Theorem~\ref{thm:kseries=k'series} we have $\SM'=(\SM_1-v)\join_k(\SM_2-v)$, where $k=t(\rho_T)$,
	whenever \texttt{Check\_polar-cat} reaches Line~\ref{alg:line: def substrud}. At this point,
	$\SM'$ is thus a $(v,\SM_1,\SM_2)$-polar-cat if and only if Definition~\ref{def:polarcat}(ii) is
	satisfied; a task that is accomplished in the \emph{if}-clause in Line~\ref{alg:line: check pc}. 
	In particular,	 we must check if $\SM_i$
	is explained by a labeled discriminating caterpillar tree $(T_i ,t_i)$, for $1\leq i \leq 2$.
	As shown in \cite{Hellmuth:13a}, checking if there exists a labeled discriminating tree $(T_i
	,t_i)$ that explains $\SM_i$, and computing it in the affirmative case, can be done in
	polynomial time. In particular, these trees $(T_1 ,t_1)$ and 
	$(T_2	,t_2)$ are unique (up to isomorphism), see \cite[Thm.~2]{Hellmuth:13a} or \cite{BD98}.
	It is now an easy task to verify if $(T_i ,t_i)$ is a caterpillar tree in
	polynomial-time. To verify Def.\ \ref{def:polarcat}(ii) we finally check that $t(\rho_{T_i})\neq
	k$ and that $v$ is part of a cherry in $T_i$ for each $i\in \{1,2\}$ by a simple polynomial-time
	traversal of $T_i$. In summary, the condition in Line~\ref{alg:line: check pc} can be checked in
	polynomial time, and $\SM$ is, at this point, a $(v,\SM_1,\SM_2)$-polar-cat if and only if this
	condition evaluates to \texttt{True}. In particular, \texttt{Check\_polar-cat} continues to the
	next vertex of $H$ if $\SM$ is not a $(v,\SM_1,\SM_2)$-polar-cat and will, in extension, always
	return \texttt{False} in Line~\ref{alg:line:return-false} whenever there is no vertex $v\in
	V(H)$, i.e. no vertex $v\in X$, such that $\SM$ is a $(v,\SM_1,\SM_2)$-polar-cat. Whenever the
	condition on Line~\ref{alg:line: check pc} holds, it is easy to construct the galled-tree
	$(N,t)\coloneqq\N(v,\SM_1,\SM_2)$ from the caterpillar trees $(T_1,t_1)$ and $(T_2,t_2)$ in polynomial-time, which,
	by Theorem~\ref{lem:pc=>gatex}, explains $\SM$. Hence $(N,t)$ is correctly returned in
	Line~\ref{alg:line:ret galledtree}. As argued, every step of the \emph{for}-loop of
	\texttt{Check\_polar-cat} takes polynomial-time, concluding an overall polynomial runtime of
	\texttt{Check\_polar-cat}.
\end{proof}

Our construction of galled-trees that are not elementary relies on pvr-networks. To prove that the
construction of a pvr-network from a prime-explaining family can be implemented in polynomial time,
we need to make some general assumptions about how networks are represented and stored in memory;
for simplicity, we assume that each network $(N,t)$ is represented such that each vertex $v\in V(N)$
has access to arrays $v.\mathtt{child}$ resp. $v.\mathtt{parent}$ of pointers to its children
respectively parents and, moreover, to the value $t(v)$ stored at $v.\mathtt{t}$. Although
technically not required by Definition~\ref{def:pvr}, it is also natural to assume that the number
of vertices in a network $(N_M,t_M)$ of a prime-explaining family $\pfam$ is polynomially bounded in
terms of the number of leaves of $N_M$; to be more precise, we assume that $|V(N_M)|\in
O(p(|L(N_M)|))$ for some polynomial $p$. This is motivated by Proposition~\ref{prop:SM-hasExpl}
which, to recall, ensures that there exists a network with $O(|L(N_M)|^2)$ vertices explaining the
same strudigram as $(N_M,t_M)$. In particular, elementary galled-trees with $\ell$ leaves have
$O(\ell)$ vertices and edges.

\begin{proposition}\label{prop:pvr-in-pol-time}
	Given a prime-explaining family $\pfam$ of a strudigram $\SM$, construction of the pvr-network $\pvr(\SM,\pfam)$ can be implemented to run in polynomial time.
\end{proposition}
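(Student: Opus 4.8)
The plan is to implement the four-step procedure of Definition~\ref{def:pvr} essentially verbatim and to bound the cost of each step in terms of $|X|$ and the sizes of the networks in $\pfam$. First I would compute the modular decomposition tree $(\MDT,\tau)$ of $\SM$, which can be done in $O(|X|^2)$ time \cite{EHREN1994}, and then obtain the set $\P$ of prime-labeled vertices by a single traversal of $\MDT$. Since $\MDstrong(\SM)$ has size $O(|X|)$, the tree $\MDT$ has $O(|X|)$ vertices and edges; in particular $\sum_{M\in\P}|\child_\MDT(M)|\in O(|X|)$. Recalling from the proof of Lemma~\ref{lem:pvr-well-defined} that $L(N_M)=\Mmax(\SM[M])=\child_\MDT(M)$ for every $M\in\P$, this yields $\sum_{M\in\P}|L(N_M)|\in O(|X|)$.

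Next I would control the total size of $\pfam$. By the standing assumption there is a polynomial $p$, which we may take nondecreasing on the nonnegative reals, with $|V(N_M)|\le p(|L(N_M)|)$ for all $M\in\P$. Hence $\sum_{M\in\P}|V(N_M)|\le |\P|\cdot p\bigl(\sum_{M\in\P}|L(N_M)|\bigr)$, which is polynomial in $|X|$ because $|\P|\in O(|X|)$ and $\sum_{M\in\P}|L(N_M)|\in O(|X|)$; the number of edges of $N_M$ is at most $|V(N_M)|^2$, so the total edge count over $\pfam$ is polynomial as well. By Lemma~\ref{lem:pvr-well-defined} the output network has vertex set $V^0(\MDT)\cup X\cup\bigcup_{M\in\P}V^0(N_M)$, so $N$ itself has polynomially many vertices and edges.

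It then remains to note that Steps~\eqref{step:T'}--\eqref{step:flatten leaves} of Definition~\ref{def:pvr} run in time linear in the size of the objects they touch. Step~\eqref{step:T'} deletes the $O(|X|)$ edges leaving prime vertices of $\MDT$. Step~\eqref{step:T''} glues each $N_M$ into $T'$ by merging $\rho_{N_M}$ with the vertex $M$ and, for each leaf of $N_M$, merging it with the child of $M$ in $\MDT$ carrying the same module label; the required bijection $L(N_M)\to\child_\MDT(M)$ is thus given for free, and each merge amounts to a constant number of pointer updates per incident edge, so this step costs $O(|V(N_M)|+|E(N_M)|)$ for each $M$. Step~\eqref{step:color} makes a single pass over $V^0(N)$, copying into $t(w)$ the label that $w$ already carries in $\MDT$ or in the unique $N_M$ containing it. Step~\eqref{step:flatten leaves} relabels the $|X|$ singleton leaves. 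Summing over $M\in\P$ and invoking the polynomial bounds of the previous paragraph gives an overall polynomial running time.

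The only step that is not pure bookkeeping is the size estimate in the second paragraph: a prime-explaining family could a priori contain arbitrarily large networks, so without the assumed polynomial bound $|V(N_M)|\in O(p(|L(N_M)|))$ the statement would simply be false. Everything else is a faithful, linear-time realization of Definition~\ref{def:pvr} under the stated pointer-based representation of networks.
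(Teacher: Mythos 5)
Your proposal is correct and follows essentially the same route as the paper's proof: construct the MDT in polynomial time, use the $O(|X|)$ bound on its size together with the standing assumption $|V(N_M)|\in O(p(|L(N_M)|))$ to bound the total size of all objects involved, and then realize the gluing of Definition~\ref{def:pvr} by local pointer updates. Your accounting of the total size of $\pfam$ (summing $|V(N_M)|$ over $M\in\P$) is in fact slightly more explicit than the paper's, but the argument is the same.
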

\begin{proof}
	Given a strudigram $\SM=(X,\Upsilon,\sigma)$ with $|X|=n$, we can construct its MDT $(\MDT,\tau)$
	in polynomial time \cite{EHREN1994}. Suppose, additionally, that a prime-explaining family
	$\pfam=\{(N_M,t_M)\mid M\in\P\}$ of $\SM$ is given, where $\P$ is the set of prime module of
	$\SM$. Note that since the MDT of $\SM$ is phylogenetic, it contains at most $n-1$ inner-vertices
	in addition to its $n$ leaves, thus $|V(T)|\in O(n)$ and $|E(T)|\leq 2n-2\in O(n)$.
	In particular,
	$|\P|\in O(n)$. Starting a traversal of $\MDT$ at its root, at each vertex $M\in \P$ we replace
	$M.\mathtt{child}$ (but temporarily storing its content in an array $A$) with the elements of
	$\rho_{N_M}.\mathtt{child}$ and put $v.\mathtt{t}$ to be $\rho_{N_M}.\mathtt{t}$, followed by a
	traversal of $N_M$ to locate the elements of $L(N_M)$. For each $M'\in L(N_M)$, locate the vertex
	$v$ of $\MDT$ in $A$ that is to be identified with the vertex $M'$, and replace the array
	$v.\mathtt{parent}$ with the content of $M'.\mathtt{parent}$. It is easy to see that 
	after $\MDT$ is fully traversed, the resulting network aligns with
	Definition~\ref{def:pvr}. Since each step of the traversal of the $O(n)$ vertices in
	$\MDT$ depend only on $|\P|$, $|A|$ and the number of vertices and edges in $N_M$, all which are
	polynomially bounded, an implementation of these tasks can be done in polynomial time.
\end{proof}

Finally, we show that general $\gatex$ strudigrams can be recognized
and the construction of networks explaining $\gatex$ strudigrams can be done
in polynomial time.

\begin{proposition}
	It is possible to correctly verify if a strudigram $\SM$ is \textup{\gatex} and, in the affirmative, construct a labeled galled-tree that explains $\SM$. Both tasks can be implemented to run in polynomial time.
\end{proposition}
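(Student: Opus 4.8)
The plan is to assemble the characterization of Theorem~\ref{thm:CharprimeCat}, the subroutine \texttt{Check\_polar-cat} from Lemma~\ref{lem: pol-check-pc}, and the pvr-machinery of Proposition~\ref{prop:pvr-in-pol-time} into a single recognition-and-construction procedure. First I would compute the modular decomposition tree $(\MDT,\tau)$ of $\SM=(X,\Upsilon,\sigma)$, which can be done in polynomial time \cite{EHREN1994}, and, by a traversal of $\MDT$, extract the set $\P$ of all prime modules $M$ of $\SM$ with $|M|>1$; these are precisely the vertices of $\MDT$ carrying the label $\primeL$. Since $\MDT$ is phylogenetic with $O(|X|)$ vertices, $|\P|\in O(|X|)$.

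Next, for each $M\in\P$ I would form the quotient $\SM_M\coloneqq\SM[M]/\Mmax(\SM[M])$; here $\Mmax(\SM[M])=\child_\MDT(M)$ is read directly off $\MDT$, and, by Observation~\ref{obs:quotient}, $\SM_M$ is cr-isomorphic to the induced substrudigram of $\SM$ on any set of representatives of the child modules, so it is computable in polynomial time and has size at most $|X|$. By Theorem~\ref{thm:Mprime=>QuotientMtruly-primitive}, each $\SM_M$ is truly-primitive, hence \texttt{Check\_polar-cat} is applicable. Running it on every $\SM_M$: if it returns \texttt{false} for some $M$, then by Lemma~\ref{lem: pol-check-pc} the quotient $\SM_M$ is not a handsome polar-cat, so $\SM\notin\PrimeCat$ and, by Theorem~\ref{thm:CharprimeCat}, $\SM$ is not \gatex, which the procedure reports before halting. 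Otherwise each call returns a labeled galled-tree $(N_M,t_M)$ explaining $\SM_M$; after a polynomial-time relabeling of their internal vertices, $\pfam(\SM)\coloneqq\{(N_M,t_M)\mid M\in\P\}$ is a prime-explaining family of $\SM$ consisting only of galled-trees.

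Finally I would build $(N,t)\coloneqq\pvr(\SM,\pfam(\SM))$; by Proposition~\ref{prop:pvr-in-pol-time} this takes polynomial time, by Lemma~\ref{lem:galled-tree-pvr} $(N,t)$ is a galled-tree, and by Proposition~\ref{prop:pvr-explains-S} it explains $\SM$, so $(N,t)$ is a valid output. For correctness of the decision: if $\SM$ is \gatex then $\SM\in\PrimeCat$ by Theorem~\ref{thm:CharprimeCat}, so every $\SM_M$ is a handsome polar-cat and \texttt{Check\_polar-cat} succeeds on each of them by Lemma~\ref{lem: pol-check-pc}, whence the procedure outputs a galled-tree; and if $\SM$ is not \gatex, at least one $\SM_M$ fails the test and the procedure reports this. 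For the runtime, the MDT and the $O(|X|)$ quotients, each of size $O(|X|)$, are obtained in polynomial time, each of the $O(|X|)$ calls to \texttt{Check\_polar-cat} is polynomial (Lemma~\ref{lem: pol-check-pc}), and the final pvr-assembly is polynomial (Proposition~\ref{prop:pvr-in-pol-time}), so the whole procedure is polynomial. I do not expect a genuine obstacle here: the work is bookkeeping — extracting the quotients consistently with $\MDT$, making the galled-trees returned for distinct prime modules internally vertex-disjoint before forming $\pfam(\SM)$, and checking that the $O(|X|)$ polynomial-time subroutine calls accumulate to a polynomial total.
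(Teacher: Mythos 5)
Your proposal is correct and follows essentially the same route as the paper's proof: compute the MDT, apply Theorem~\ref{thm:Mprime=>QuotientMtruly-primitive} and Lemma~\ref{lem: pol-check-pc} to each prime quotient, decide via Theorem~\ref{thm:CharprimeCat}, and assemble the explaining galled-tree with Proposition~\ref{prop:pvr-in-pol-time}, Lemma~\ref{lem:galled-tree-pvr}, and Proposition~\ref{prop:pvr-explains-S}. Your explicit remark about relabeling the returned galled-trees to make them internally vertex-disjoint is a minor detail the paper leaves implicit, but it does not change the argument.
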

\begin{proof}
	Let $\SM$ be a strudigram. The modular decomposition $\MDstrong$ of $\SM$ can be 
	can be determined in polynomial time \cite{EHREN1994}. In particular, the set $\P\subseteq\MDstrong$ of all prime modules
	of $\SM$ can be obtained in polynomial time. Moreover, given some $M\in\P$ the computation of a
	strudigram that is cr-isomorphic to the quotient $\SM_M\coloneqq \SM[M]/\Mmax(\SM[M])$ can, by
	Observation~\ref{obs:quotient}, be done by constructing a set $W$ that contains precisely one
	element from each module in $\Mmax(\SM[M])\subseteq\MDstrong$ and then considering the strudigram
	$\SM[W]$. Thus construction of $\SM_M$ is possible in polynomial time. Recall that by
	Theorem~\ref{thm:Mprime=>QuotientMtruly-primitive} the quotient strudigram $\SM_M$ is
	truly-primitive. This together with  Lemma~\ref{lem: pol-check-pc} implies that one can correctly verify whether $\SM_M$ is a
	handsome polar-cat in polynomial time. Since $M$ was arbitrarily chosen, and $\SM$ is, by
	Theorem~\ref{thm:CharprimeCat}, \gatex if and only if $\SM\in\PrimeCat$, it is thus possible to
	decide weather or not $\SM$ is \gatex in polynomial time.
	
	Suppose now that $\SM$ is \gatex, that is, when $\SM\in\PrimeCat$. Using the notation from the
	previous paragraph, we note that by Lemma~\ref{lem: pol-check-pc}, the algorithm
	\texttt{Check\_polar-cat} will output a labeled galled-tree $(N_M,t_M)$ that explains $\SM_M$ for
	each $M\in\P$. Hence, if $\SM$ is \gatex, it is possible to find a prime-explaining family
	$\pfam=\{(N_p,t_p)\}_{p\in\P}$ of $\SM$ containing only galled-trees. By
	Proposition~\ref{prop:pvr-in-pol-time}, it is possible to construct the pvr network
	$(N,t)\coloneqq\pvr(\SM,\pfam)$ in polynomial time. Finally, by
	Proposition~\ref{prop:pvr-explains-S} respectively Lemma~\ref{lem:galled-tree-pvr}, $(N,t)$ is a
	galled-tree that explains $\SM$.
\end{proof}

\section{Summary and Outlook}

In this work, we considered strudigrams, which encompass several combinatorial objects such as
symbolic date maps, labeled symmetric 2-structures, and edge-colored undirected graphs.
Specifically, we provided a polynomial-time approach to construct labeled networks $(N,t)$ that
explain an arbitrary given strudigram $\SM$. Furthermore, we demonstrated that every strudigram can
be explained by an lca-network. These networks $N$ have the advantageous property that the least
common ancestor (lca) for all $A \subseteq L(N)$ is well-defined. However, such networks can be
quite complex, as illustrated in Fig.~\ref{fig:lca-N}. This raises the question of whether simpler
networks can explain $\SM$. To address this, we considered galled-trees. We characterized
strudigrams that can be explained by galled-trees (\gatex strudigrams) and provided polynomial-time
algorithms for their recognition and the construction of labeled galled-trees explaining them. For
this purpose, we used prime-vertex replacement (pvr) networks obtained from the modular
decomposition tree by replacing prime vertices with labeled galls.

This paper opens many avenues for future research. One promising direction is to establish a
characterization of \gatex strudigrams in terms of a family of forbidden substrudigrams. Since the
property of being \gatex is hereditary, such a family exists but could be of infinite size.
Nevertheless, for \gatex strudigrams $\SM= (X,\Upsilon,\sigma)$ with $|\Upsilon|=2$, a
characterization was provided in \cite[Theorem 4.8]{HS:24} by means of a set of 25 forbidden
subgraphs, the largest of which has a size of 8. It would be interesting to investigate whether this
bound still holds for larger sets $\Upsilon$, and if not, to determine how this bound relates to the
size of $\Upsilon$.

In Sections~\ref{sec:gatex} and \ref{sec:algo}, we focused on galled-trees, a subclass of level-1
networks, which are networks where each biconnected component contains at most one hybrid. It would
be interesting to characterize strudigrams that can be explained by level-1 or more general level-k
networks. In this context, we believe that pvr networks become crucial, and one of the main tasks is
to characterize the structure of primitive strudigrams that can be explained by such networks.

Generally, one might be interested in networks $N$ with the $\2lca$-property that are free from
``superfluous'' vertices that do not serve as the $\lca$ for any $x, y \in L(N)$. For example, in
Fig.~\ref{fig:lca-N}, the $\lca$ of any two vertices is located in the network $N(w)$ rooted at $w$. It
would be valuable to explore constructive methods to stepwise remove such superfluous vertices. In
particular, one may seek networks with a minimum number of vertices or hybrids that explain a given
strudigram. Are these problems NP-hard or solvable in polynomial time?

Regarding truly primitive strudigrams, Theorem~\ref{thm:unp<=>treeExpl} implies that such type of
strudigrams must contain at least one induced $P_4$ or rainbow triangle. Observations suggest that
these substructures are quite prevalent in truly primitive strudigrams, with a significant
proportion of vertices involved in at least one $P_4$ or rainbow triangle. To gain a deeper
understanding of the structure of truly primitive strudigrams, it would be of interest to study the
density and distribution of these substructures in greater detail. 
Additionally, it is worth investigating to what extent truly primitive strudigrams $\SM =
(X,\Upsilon,\sigma)$ are explained by unique galled-trees, a question that has already been answered
for the case $|\Upsilon|=2$ \cite[Prop.\ 6.13]{HS:22}.

\paragraph{Acknowledgements}
This paper is dedicated to the memory of Andreas Dress, whose pioneering work and dedication to discrete biomathematics, as well as many other fields, have inspired generations of researchers. His contributions will undoubtedly continue to influence and guide our work for many years to come.

\section*{Declarations}

The authors declare that they have no competing interests. All authors contributed equally to this work.

\bibliographystyle{spbasic}
\bibliography{pc}

\end{document}